\renewcommand{\baselinestretch}{1}
\def\be{\begin{equation}}
\def\ee{\end{equation}}
\def\H{{\mathbb{H}}}
\def\eps{\varepsilon}
\def\R{{\mathbb R}}
\def\bs{\begin{split}}
\theoremstyle{plain}
\def\lm{\begin{lem}}
\def\ml{\end{lem}}
\newtheorem{thm}{Theorem}[section]
\newtheorem{cor}[thm]{Corollary}
\newtheorem{lem}[thm]{Lemma}
\newtheorem{prop}[thm]{Proposition}
\theoremstyle{definition}
\theoremstyle{remark}
\newtheorem{remark}{Remark}[section]
\numberwithin{equation}{section}
\def\be{\begin{equation}}
\def\ee{\end{equation}}
\def\R{\mathbb R}
\def\pru{\begin{proof}}
\def\urp{\end{proof}}
\theoremstyle{definition}
\newtheorem*{definition}{Definition}
\newtheorem*{theorem*}{Theorem}
\newtheorem*{corollary*}{Corollary}
\newtheorem*{acknowledgment}{Acknowledgments}
\begin{document}


\title[Mild Solutions to Navier-Stokes on the Hyperbolic Space]{Well-posedness and global in time behavior for $L^p$-mild solutions to the Navier-Stokes Equation on the hyperbolic space}

\author[B. Balentine]{Braden Balentine}
\address{Department of Mathematics\\
University of Colorado Boulder\\ Campus Box 395, Boulder, CO, 80309, USA}
%

\email{braden.balentine@colorado.edu}

\begin{abstract}
We study mild solutions to the Navier-Stokes equation on the $n$-dimensional hyperbolic space $\mathbb{H}^n$, $n \geq 2$. We use dispersive and smoothing estimates proved by Pierfelice on a class of complete Riemannian manifolds to extend the Fujita-Kato theory of mild solutions from $\mathbb{R}^n$ to $\mathbb{H}^n$. This includes well-posedness results for $L^n$ initial data and $L^n \cap L^p$ initial data for $1 < p < n$, global in time results for small initial data, and time decay results for the $L^n$ and $L^p$ norms of both $u$ and $\nabla u$. Due to the additional exponential time decay offered on $\mathbb{H}^n$, we are able to simplify the proofs of the $L^n$ and $L^p$ norm decay results as compared to the Euclidean setting. Additionally, we are able to show that mild solutions on $\mathbb{H}^n$ belong to a wider range of space-time $L^rL^q$ spaces than is known for Euclidean space, and that the $L^n$ norm of a global solution decays to zero as $t$ goes to infinity on $\mathbb{H}^n$, which was a question left open by Kato for $\mathbb{R}^n$, $n\geq 3$. As a necessary part of our work, we extend to $\mathbb{H}^n$ known facts in Euclidean space concerning the strong continuity and contractivity of the semigroup generated by the Laplacian. Also, we establish necessary boundedness and commutation properties for a certain projection operator in the setting of $\mathbb{H}^n$ using spectral theory. This work, together with Pierfelice's, contributes to providing a full Fujita-Kato theory on $\mathbb{H}^n$. 

\end{abstract}
\subjclass[2010]{35Q30, 76D05, 76D03;}
\keywords{Mild solutions, Navier-Stokes, hyperbolic space, well-posedness, Fujita-Kato theory}

\maketitle

\tableofcontents

 
\section{Introduction}\label{intro} 
In this work, we are concerned with the Navier-Stokes equation on the $n$-dimensional hyperbolic space $\mathbb{H}^n$ of constant sectional curvature $-1$, where $n \geq 2$. Specifically, we study the theory of mild solutions to Navier-Stokes on $\mathbb{H}^n$. To begin, let us recall the definition of mild solutions to Navier-Stokes in the context of Euclidean space $\R^n$. 

On $\R^n$, the Navier-Stokes equation is given by
\begin{equation} \tag{$\text{N-S}_{\R^n}$} \label{NSRn}
\begin{split}
\partial_t u - \Delta_{\R^n} u + u \cdot \nabla u + \nabla p &= 0 \\
\text{div } u &= 0, \quad u(0,t) = a, 
\end{split}
\end{equation}
where $u = (u_1, u_2, \ldots, u_n)$ is the velocity of the fluid, $p$ a scalar field denoting the pressure, the condition $\text{div } u = 0$ means the fluid is incompressible, and the term  $u \cdot \nabla u$ is given in coordinates by 
\[
(u \cdot \nabla u)^j = u^i\partial_iu^j,
\]
where we sum over the repeated index $i$.

The mild solution approach was introduced by Fujita and Kato in \cite{kato_fujita} and Sobolevski\u{\i} in \cite{sobolevskii} and involves using the Leray projector $\mathbb{P}_{\R^n}$ onto divergence free vector fields to rewrite equation \eqref{NSRn} as
 \begin{equation}  \tag{$\text{N-S}_{\R^n}'$} \label{NSRn'}
\begin{split}
\partial_t u - \Delta_{\R^n} u +\mathbb{P}_{\R^n}(u \cdot \nabla u)&= 0 \\
\text{div } u &= 0, \quad u(0,t) = a, 
\end{split}
\end{equation}
which can be considered as a non-linear perturbation of the heat equation on $\R^n$ (see, for example, \cite{kato_fujita} and \cite{KatoMild}). Thus by using the heat semigroup $e^{t\Delta_{\R^n}}$ and Duhamel's formula, the equation \eqref{NSRn'} can be converted into an integral equation
\begin{equation}\tag{$\text{N-S}_{\R^n,\text{int}}$} \label{NSRn-int}
u(t) = e^{t\Delta_{\R^n}}a - \int_0^t e^{(t-s)\Delta_{\R^n}}\mathbb{P}_{\R^n}(u(s) \cdot \nabla u(s))\,ds, \quad \text{div } a = 0,
\end{equation}
which is then solved by means of fixed point methods in suitably chosen Banach spaces (or equivalently, by Picard iteration). Solutions to this integral equation are referred to as mild solutions.

We remark here that mild solutions were originally called  ``strong solutions" by Fujita and Kato in \cite{kato_fujita} and as we will see from Kato's results in \cite{KatoMild} , this is because local existence, uniqueness, and smoothness results hold for mild solutions in all dimensions $n\geq 2$ with initial data coming from a wide range of $L^p$ spaces. Additionally, in the case of suitably small initial data, mild solutions are known to exist globally and decay estimates of certain  $L^p$ norms have been established.  We review more of the literature concerning mild solutions below.

The situation for mild solutions stands in contrast to the situation for Leray-Hopf weak solutions of the form
\begin{equation}
u \in L^{\infty}([0,T], L^2(\R^n)) \cap L^{2}([0,T], \dot{H}^1(\R^n)),
\end{equation}
which are weak solutions of Navier-Stokes satisfying the global energy inequality
\begin{equation} \label{enineq}
\|u(t)\|^2_{L^2(\R^n)} + 2 \int_0^t \|\nabla u(s) \|^2_{L^2(\R^n)} \,ds \leq \|a\|^2_{L^2(\R^n)}, \quad 0 \leq t \leq T.
\end{equation}
In the work of Leray \cite{leray1934} and Hopf \cite{Hopf}, global existence of weak solutions was shown for dimensions $n=2,3$ and in the case of $\R^2$, smoothness and uniqueness of these solutions is well-known. However, unlike the situation for mild solutions and small initial data, questions concerning global existence, uniqueness, smoothness, and time decay for $L^2$ norms remain open for Leray-Hopf weak solutions in the case of $\R^3$.

\subsection{The Navier-Stokes equation on $\mathbb{H}^n$}
Given that our goal is to extend the mild solution theory presented by Kato in \cite{KatoMild} to the hyperbolic space $\mathbb{H}^n$, we must first discuss the proper formulation of \eqref{NSRn} on a complete Riemannian manifold $(M,g)$. Specifically, we must make a decision concerning how to generalize the Laplacian $\Delta_{\R^n}$ to a Riemannian manifold, since in this context there is no canonical choice. 

For instance, throughout the course of this work, we will consider the following operators: the Laplace-Beltrami operator on functions defined by
\[
\Delta_g f= \text{div}(\text{grad }f) = \frac{1}{\sqrt{|g|}} \frac{\partial}{\partial x^j}(\sqrt{|g|}g^{ij} \frac{\partial f}{\partial x^i}),
\]
where $|g|$ denotes the determinant of the metric $g$; the Bochner Laplacian on the space of rank $(k,0)$ tensor fields, $\mathcal{T}^k_0(M)$, defined by
\[
\Delta_{B,k} u = - \nabla^* \, \nabla u \quad \text{for } k = 0, 1, 2 \ldots,
\]
where $\nabla$ is the induced Levi-Civita connection on $\mathcal{T}^k_0(M)$; and the Hodge Laplacian acting on the space of differential $k$-forms $\Omega^k(M)$ defined by 
\[
\Delta_{H,k} u = (d^*d + dd^*) u \quad \text{for } k = 0, 1, 2 \ldots, n.
\]
We remark here that with the sign conventions adopted above, $\Delta_g$ and $\Delta_{B,k}$ are negative operators and $\Delta_{H,k}$ is positive.

For functions, one can show all of the above Laplacians coincide (up to a plus or minus sign). However, though $\Omega^k(M) \subset \mathcal{T}^k_0(M) $, the Bochner and Hodge Laplacians do not in general agree for $k$-forms with $k\geq 1$ and are instead related by the Bochner-Weitzenb{\"o}ck identity (see \cite{jost}, for example). For our purposes, the 1-form version is sufficient, which is given by
\begin{equation} \label{wb}
\Delta_{H,1} u = -{\Delta}_{B,1} u +\text{ Ric } u,
\end{equation}
where $\text{Ric}:T^*M \to T^*M$ is the Ricci operator on 1-forms.

Following Ebin-Marsden \cite{ebin_mars}, we use yet another operator
\begin{equation} \label{hodgeL}
L = -2\text{Def}^*\text{Def}  = -\Delta_{H,1} - dd^* + 2\text{Ric},
\end{equation}
where $\text{Def}$ is the deformation tensor defined by $\text{Def } u =\frac{1}{2}\big(\nabla u + (\nabla u)^T\big)$, and $\text{Def}^*$ is its adjoint. 

On $\mathbb{H}^n$ with $d^*u=0$,
\[
L u = \Delta_{B,1}u -(n-1)u,
\]
thus the Navier-Stokes equation becomes
\begin{equation} \tag{$\text{N-S}_{\mathbb{H}^n}$} \label{NSH}
\begin{split}
\partial_t u -\Delta_{B,1}u + (n-1)u+\nabla_{u^{\#}} u + d p &= 0 \\
d^* \, u &= 0, \quad u(0,t) = a,
\end{split}
\end{equation}
where $u$ is now a differential 1-form on $M,$ since $L$ as defined sends 1-forms to 1-forms. As well, $\#$ represents the musical isomorphism $(\cdot)^{\#}: T^*M \to TM$. See \cite{Chan-C-Disconzi} for more on the formulation of the Navier-Stokes equation on Riemannian manifolds.

As in \cite{Pierfelice} and in the spirit of equation \eqref{NSRn'}, we can write equation \eqref{NSH} in pressure-free form. To do this, we take $d^*$ of \eqref{NSH} and use the fact that $d^*u = 0$ for a smooth solution $(u,p)$ of \eqref{NSH}, which gives
\begin{equation} \label{PEllip}
-\Delta_g p + d^* \nabla_{u^{\#}} u = 0.
\end{equation}
Thus 
\begin{equation}
dp =  -d\,(-\Delta_g)^{-1} d^*\, \nabla_{u^{\#}} u.
\end{equation}
Therefore, we define an operator on $T^*\mathbb{H}^n$ by 
\begin{equation} \label{projdef}
\mathbb{P} = I - d\,(-\Delta_g)^{-1} d^*,
\end{equation}
which allows us to rewrite \eqref{NSH} as follows: 
\begin{equation} \tag{$\text{N-S}_{\mathbb{H}^{n}}'$} \label{NSH'}
\begin{split}
\partial_t u -Lu + \mathbb{P }\, (\text{div}(u^{\#} \otimes u^{\#}))^{\flat} &=0  \\
d^* \, u &= 0,  \quad u(0,t) = a,
\end{split}
\end{equation}
where we have also used the fact that $\nabla_{u^{\#}} u = (\text{div}(u^{\#} \otimes u^{\#}))^{\flat}$ when $d^* u =0$. Here, $\flat$ represents the musical isomorphism $(\cdot)^{\flat}: T\mathbb{H}^n \to T^*\mathbb{H}^n$. 

Also, as in the case of $\R^n$, we can then convert \eqref{NSH'} into the following integral equation
\begin{equation} \label{INT}
u=u_0(t)+Gu(t),
\end{equation}
where
\begin{equation} \tag{$\text{N-S}_{\mathbb{H}^{n},\text{int}}'$}  \label{inteq}
u_0(t) = e^{tL}a, \quad Gu(t) = -\int_0^te^{(t-s)L}\mathbb{P }\,\big(\text{div}(u^{\#} \otimes u^{\#}))^{\flat}\big)(s)\,ds, \quad d^*a = 0.
\end{equation}
Note that in defining \eqref{inteq}, we have used Strichartz' result in \cite{Strichartz_laplacian} that $\Delta_{B,1}$ is self-adjoint on a complete Riemannian manifold $(M,g)$, so that the semigroup $e^{tL}$ is well defined. In subsequent sections, we will focus our attention on this integral formulation of the Navier-Stokes equation on $\mathbb{H}^n$.

\subsection{Overview of previous results}
First we will overview some classical results concerning mild solutions to Navier-Stokes on $\R^n$. Then we will examine some known results for mild solutions to Navier-Stokes in the setting of a complete Riemannian manifold $(M,g)$.

As mentioned, the theory of mild solutions to the Navier-Stokes equation goes back to both 1959 and the results of Sobolevski\u{\i} \cite{sobolevskii} and 1961 and the work of Fujita and Kato \cite{kato_fujita}. These works showed local existence and uniqueness results for the Navier-Stokes equation on bounded domains in $\R^2$ for initial data in $L^2$, and in $\R^3$ for sufficiently regular initial data.

Also in $\R^3$ and for initial data in $L^p$, $p>3$, existence of local solutions in certain $L^sL^q$ spaces was shown for the full space $\R^3$ by Fabes, Jones, and Rivi{\`e}re \cite{fabes-jones-riviere} in 1972, by Lewis \cite{Lew} for $\R^3_{+}$ in 1973, and by Fabes, Lewis, and Rivi{\`e}re \cite{flr} for bounded domains in 1977. For more on the case $\R^3$ and $L^p$ with $p>3$, see also von Wahl \cite{von1980regularity} (1980), Miyakawa \cite{miyakawa1981} (1981), and Giga \cite{giga_semilinear} (1986).

In 1980, Weissler \cite{weissler} constructed local solutions for initial data in $L^3(\R_{+}^3)$. Then in 1984, Kato \cite{KatoMild} presented a theory for mild solutions to Navier-Stokes with initial data in $L^n(\R^n)$, $2 \leq n < \infty$, and initial data in $L^n(\R^n) \cap L^p(\R^n)$ for $1 < p < n$. In this work, Kato also showed global well-posedness and $L^n$ and $L^p$-norm time decay properties in the case of small initial data.

Related to Kato's 1984 paper is a 1983 preprint by Giga that was cited by Kato in \cite{KatoMild} and eventually published in 1986 as \cite{giga_semilinear}, where Giga considered mild solutions to semilinear parabolic equations of the form
\[
u_t + Au=Fu,
\]
where $A$ is an elliptic operator and $Fu$ represents the nonlinearity of the equation. In this work, Giga proved well-posedness results in $L^qL^p$ spaces, where $q$ and $p$ are chosen so that the $L^qL^p$ norm is either dimensionless or scaling invariant. Kato then used this to prove a decay result for the $L^n(\R^n)$ norm of a mild solution to Navier-Stokes as part of the aforementioned theory presented in \cite{KatoMild}.

In 1985, Giga and Miyakawa extended the results of Fujita and Kato \cite{kato_fujita} on bounded domains in $\R^2$ and $\R^3$ from $L^2$ to a full $L^p$ theory for $1 < p < \infty$, while also getting rid of the regularity requirement for the initial condition mentioned above for $\R^3$. The case $L^3(D)$, where $D \subset \R^3$ is an exterior domain, was established by Iwashita \cite{iwashita} in 1989.

Since much of this current work is focused on global decay properties for mild solutions, we also survey some results in this direction. As discussed, Kato in his 1984 paper \cite{KatoMild} showed time decay properties for mild solutions with small initial data coming from $L^n(\R^n)$. Specifically, he showed the $L^2(\R^2)$ and $L^2(\R^3)$ norms of global solutions coming from small data decay to zero. Kato did not answer whether the $L^n(\R^n)$ decays to zero for $n\geq 3$ (for more on this, see Section \ref{cmpcnt}). 

Later in 1984, Masuda \cite{masuda1984weak} showed the $L^2(\R^3)$ norm of a Leray-Hopf weak solution decays to zero, though he did not show a uniform decay rate. Then in 1985, Schonbek \cite{schonbek19852} achieved algebraic decay rates for the $L^2(\R^3)$ norm of Leray-Hopf weak solutions coming from initial data in $L^1(\R^3) \cap L^2(\R^3)$ by using Fourier splitting methods. Many years later, in 2003, Gallagher, Iftimie, and Planchon \cite{gall} answered a question left open by Kato \cite{KatoMild} concerning the decay of the $L^3(\R^3)$ norm of a mild solution coming from small data. In fact, these authors show something stronger by proving that any a priori global solution to Navier-Stokes that is continuous in time must decay to zero. Then in \cite{auscher2004stability}, Auscher, Dubois, and Tchamitchian proved global solutions coming from small initial data in the closure of the Schwarz class in $BMO^{-1}$ decay to zero, which is optimal in the sense that it encompasses other known stability results.

For further information on mild solutions on $\R^n$, see also \cite{buckmaster, fabes-jones-riviere, Furioli2000, kuka, lem-rie, lem-rie2,  lions, tsai-lec}.

Having surveyed some of the results concerning mild solutions to Navier-Stokes on Euclidean space, we now collect together the well-posedness and time decay results of Kato for $\R^n$ from \cite{KatoMild}, which we will subsequently extend to $\mathbb{H}^n$.

\begin{thm} \cite{giga_semilinear, KatoMild}  \label{84thm}
Let $a \in L^n(\R^n)$ with $\text{div } a = 0$ and $n \geq 2$. 
\begin{enumerate}
\item[($K1$)] Then there is $T>0$ and a unique solution $u$ of \eqref{NSRn} such that 
\begin{align} 
\label{mainspace2} t^{\big(\frac{1}{2} -\frac{n}{2q} \big)} u &\in BC\big([0,T), L^q(\R^n)\big) \quad \text{for } \quad n \leq q \leq \infty, \\
\label{mainDspace2} t^{\big(1 -\frac{n}{2q} \big)} \nabla u &\in BC\big([0,T), L^q(\R^n)\big) \quad \text{for } \quad n \leq q < \infty,
\end{align} 
with values zero at $t=0$, except for $q=n$ in \eqref{mainspace2}, in which case $u(0) = a$. As well, $u$ has the additional property
\begin{equation} \label{gigares}
u \in L^r((0,T_1), L^q(\R^n)) \quad \text{with } \frac{1}{r} = \frac{1}{2} - \frac{n}{2q}, \,\, n < q < \frac{n^2}{n-2},
\end{equation}
for some $0 < T_1 \leq T$.

\item[($K2$)] There is $\lambda >0$ such that if $\|a\|_{L^n(\R^n)}\leq \lambda$, then the solution from $(K1)$ is global and we may take $T=T_1=\infty$.
\item[($K2'$)] In the situation $(K2)$ where the solution $u$ is global, we have
\begin{equation} \label{intnormdcy}
\lim_{T \to \infty} \frac{1}{T}\int_0^T \|u(t)\|_{L^n(\R^n)}\, dt = 0.
\end{equation}

\item[($K3$)] If $a \in L^p(\R^n)\cap L^n(\R^n)$, where $1 < p < n$, then the solution given by $(K1)$ has the following additional properties.
\begin{align} 
\label{mainLpspace2} u &\in BC\big([0,T_2), L^p(\R^n)\cap L^n(\R^n)\big), \\
\label{mainLpDspace2} t^{\frac{1}{2}} \nabla u &\in BC\big([0,T_2), L^p(\R^n)\cap L^n(\R^n)\big), 
\end{align} 
for some $0 < T_2 \leq T$.
\item[($K4$)] There is some $0 < \lambda_1 \leq \lambda$ such that if $\|a\|_{L^n(\R^n)}\leq \lambda_1$, then the solution in $(K3)$ is global and we may set $T=T_1=T_2=\infty$. Moreover, for any finite $q \geq p$,
\begin{align} 
\label{mainLpspace3} t^{\big(\frac{n}{2p} - \frac{n}{2q}\big)}u &\in BC\big([1,\infty), L^q(\R^n)), \\
\label{mainLpDspace3} t^{\big(\frac{n}{2p} - \frac{n}{2q}+\frac{1}{2}\big)} \nabla u &\in BC\big([1,\infty), L^q(\R^n)\big), 
\end{align} 
provided that the exponent of $t$ is smaller than 1 (separately for each of $u$ and $\nabla u$), otherwise the exponent should be replaced by an abritrary number smaller than 1, but close to 1.
\item[($K4'$)] In the situation $(K4)$ where the solution $u$ is global, we have
\begin{equation}
\lim_{t\to \infty} \|u(t)\|_{L^p(\R^n)} = 0.
\end{equation}
\end{enumerate}
\end{thm}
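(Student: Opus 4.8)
The plan is to follow the Fujita--Kato--Giga scheme, realizing $u$ as a fixed point of the map $\Phi u = u_0 + Gu$ attached to the integral equation \eqref{NSRn-int}, in the Kato space adapted to the scaling of the $L^n$ norm; we recall the structure because the body of the paper reproduces each step on $\mathbb{H}^n$. First fix any $q$ with $n < q < n^2/(n-2)$ and set $1/r = 1/2 - n/(2q)$, so that $2 < r < \infty$, and define
\[
\norm{u}_{X_T} := \sup_{0<t<T}\Big( t^{\frac12-\frac{n}{2q}}\,\norm{u(t)}_{L^q(\R^n)} + t^{1-\frac{n}{2q}}\,\norm{\nabla u(t)}_{L^q(\R^n)}\Big),
\]
letting $X_T$ be those $u\in BC([0,T),L^n(\R^n))$ with $\norm{u}_{X_T}<\infty$ for which the two weighted quantities tend to $0$ as $t\to 0^+$. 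The two analytic inputs are: (i) the heat-semigroup estimates $\norm{e^{t\Delta_{\R^n}}f}_{L^q}\lesssim t^{-\frac n2(\frac1p-\frac1q)}\norm{f}_{L^p}$ and $\norm{\nabla e^{t\Delta_{\R^n}}f}_{L^q}\lesssim t^{-\frac12-\frac n2(\frac1p-\frac1q)}\norm{f}_{L^p}$ for $1<p\le q<\infty$, together with strong continuity of $e^{t\Delta_{\R^n}}$ on $L^n$; and (ii) boundedness of the Leray projector $\mathbb{P}_{\R^n}$ on $L^q$ for $1<q<\infty$, a Riesz-transform fact. Writing the nonlinearity in divergence form, $e^{(t-s)\Delta_{\R^n}}\mathbb{P}_{\R^n}\,\mathrm{div}(u\otimes u)$, and moving one derivative onto the semigroup, (i)--(ii) plus a Hardy/H\"older bound for the $s$-integral give $\norm{Gu-Gv}_{X_T}\le C\,(\norm{u}_{X_T}+\norm{v}_{X_T})\,\norm{u-v}_{X_T}$ with $C$ \emph{independent of} $T$, which is exactly the criticality of the $L^n$ norm. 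Since $a\in L^n$ forces $\norm{u_0}_{X_T}\to 0$ as $T\to 0$ (via (i) applied to data from a dense subclass), the contraction mapping theorem on the ball of radius $2\norm{u_0}_{X_T}$ in $X_T$ produces ($K1$), uniqueness coming from the contraction estimate; property \eqref{gigares} follows by interpolating the $X_T$-bounds against the $L^n$-bound, and I would cite Giga \cite{giga_semilinear} for the sharpest statement.

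For ($K2$): because $C$ above does not depend on $T$, if $\norm{a}_{L^n}\le\lambda$ is so small that $\norm{u_0}_{X_\infty}$ lies below the fixed-point threshold, the same contraction runs on $X_\infty$, giving a global solution with $T=T_1=\infty$. For ($K2'$) I would combine the now-global membership $u\in L^r((0,\infty),L^q)$ from Giga's theory with the pointwise bound $\norm{u(t)}_{L^q}\lesssim t^{-(1/2-n/(2q))}$ from ($K1$) and the convolution estimate $\norm{Gu(t)}_{L^n}\lesssim\int_0^t(t-s)^{-n/q}\norm{u(s)}_{L^q}^2\,ds$; averaging in $t$ and using that $\norm{e^{t\Delta_{\R^n}}a}_{L^n}\to 0$ as $t\to\infty$ yields the Ces\`aro decay \eqref{intnormdcy}, as in \cite{KatoMild}. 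This is precisely the step the paper later streamlines on $\mathbb{H}^n$, where the exponential factor in Pierfelice's dispersive/smoothing estimates upgrades this to honest decay of $\norm{u(t)}_{L^n}$.

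For ($K3$), assume additionally $a\in L^p$ with $1<p<n$ and propagate the $L^p$-bound through \eqref{NSRn-int}: $\norm{e^{t\Delta_{\R^n}}a}_{L^p}\le\norm{a}_{L^p}$, while the bilinear term is estimated with one factor measured in $X_T$ (supplying the $s$-decay) and one factor in $L^p$, closing a fixed point in $BC([0,T_2),L^p\cap L^n)$ with $t^{1/2}\nabla u$ bounded there; the only smallness needed is still on $\norm{a}_{L^n}$, which gives the globality in ($K4$). The sharpened rates \eqref{mainLpspace3}--\eqref{mainLpDspace3} are obtained by feeding the already-established weighted bounds back into $Gu$ and gaining one power of $t$ at a time --- a finite bootstrap --- as long as the exponent of $t$ stays below $1$ (whence the caveat in the statement). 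Finally ($K4'$): split $\norm{u(t)}_{L^p}\le\norm{e^{t\Delta_{\R^n}}a}_{L^p}+\norm{Gu(t)}_{L^p}$; the first term tends to $0$ (decompose $a$ into an $L^1\cap L^p$ part, which decays polynomially, plus an arbitrarily small $L^p$ remainder), and the second tends to $0$ by inserting the ($K4$)-decay of $\norm{u(s)}_{L^q}$ into the $s$-integral defining $Gu$.

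I expect the main obstacle to be the critical bilinear estimate closing with a $T$-independent constant --- the heart of Fujita--Kato --- combined with the borderline behavior at $t=0$ when $q>n$: the semigroup is not norm-continuous at $0$ in the weighted topologies, which is why $X_T$ must include the ``vanishing as $t\to0$'' condition and why $\norm{u_0}_{X_T}\to 0$ needs the density/approximation argument rather than bare strong continuity. For ($K2'$), extracting the Ces\`aro decay from mere global $L^rL^q$-membership is the delicate analytic point on $\R^n$, and it is exactly here that the hyperbolic geometry will later buy a genuine simplification.
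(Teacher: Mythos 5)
First, a point of orientation: Theorem \ref{84thm} is quoted in the paper from \cite{giga_semilinear, KatoMild} and is not proved there; the paper's own arguments are for the hyperbolic analogues (Theorems \ref{EUthm}--\ref{Lptimedecay}). Measured against the Fujita--Kato--Giga scheme that the citation points to, your outline is essentially the right one for $(K1)$--$(K2)$, $(K3)$--$(K4)$ and $(K4')$: contraction in a scale-critical weighted space, $T$-independent bilinear constant, smallness of $\norm{u_0}_{X_T}$ via density, propagation of the $L^p$ bound with one factor in the Kato norm, and the split of $\norm{u(t)}_{L^p}$ into free plus Duhamel parts. One structural remark: your norm $X_T$ couples $u$ and $\nabla u$, which is Kato's original setup; the paper deliberately decouples them on $\mathbb{H}^n$ by estimating $Gu$ in divergence form using only norms of $u$, and then recovers the $q=n$, $\nabla u$, and $L^p$ statements by Gr\"onwall rather than by re-running the iteration. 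Either route works; the decoupled one is what lets the paper use a single $T$ and a single $\lambda$ throughout.

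There is, however, one genuine gap: your derivation of \eqref{gigares}. You propose to get $u\in L^r((0,T_1),L^q)$ by ``interpolating the $X_T$-bounds against the $L^n$-bound,'' but the pointwise bound $\norm{u(t)}_{L^q}\le C\,t^{-(\frac12-\frac{n}{2q})}=C\,t^{-1/r}$ places $\norm{u(\cdot)}_{L^q}$ exactly in weak-$L^r$ in time and \emph{not} in $L^r$: $\int_0^{T_1}t^{-r\cdot(1/r)}\,dt=\int_0^{T_1}t^{-1}\,dt$ diverges, so no interpolation of sup-norm bounds can close this. Giga's actual mechanism (reproduced for $\mathbb{H}^n$ in Section \ref{PSLpf}) is different: one shows the \emph{linear} map $f\mapsto\norm{e^{t\Delta}f}_{L^q}$ is of weak type $(\tilde n,\tilde r)$ and $(q,\infty)$ and upgrades it to strong type $(n,r)$ by Marcinkiewicz interpolation --- this is where the restriction $q<n^2/(n-2)$ enters --- and then handles the Duhamel term through the Picard iterates with the one-dimensional Hardy--Littlewood--Sobolev inequality at the critical exponent. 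The same borderline issue is why your sketch of $(K2')$ genuinely needs the $L^rL^q$ membership and not merely the dispersive sup bounds; you flag this as delicate, correctly, but the source of \eqref{gigares} itself must be the operator-level interpolation, not the fixed-point bounds.
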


\begin{remark}
The property \eqref{gigares} in $(K1)$ for the free solution $u_0$ is owed to Giga, who proved the result in \cite{giga_semilinear}. Kato then extended the result to the full solution $u$ and used this to prove the norm decay result \eqref{intnormdcy} in $(K2')$.
\end{remark}

\begin{remark} The result $(K2')$, combined with the energy inequality, which holds for any smooth solution $(u,p)$ of \eqref{NSRn} by the structure of the equation, can be used to show
\begin{equation} \label{L2normdecay}
\lim_{t \to \infty} \|u(t)\|_{L^2(\R^2)} = 0,
\end{equation}
which answered an open question of Leray's in  \cite{LerayExt} and \cite{leray1934} concerning decay of the $L^2$ norm for $\R^2$, at least for the case of mild solutions and small initial data. 

But as Kato mentions in \cite{KatoMild}, he was not able to answer whether
\begin{equation} \label{Lnnormdecay}
\lim_{t \to \infty} \|u(t)\|_{L^n(\R^n)} = 0,
\end{equation}
for $n\geq 3$. As mentioned previously, this was answered in the affirmative in 2003 by Gallagher, Iftimie, and Planchon \cite{gall} for $n=3$ (see also, Tsai \cite{tsai-lec}). 
\end{remark}

\begin{remark} Leray's question concerning decay of the $L^2$ norm for solutions in $\R^n$ for $n\geq 3$ is answered, at least for mild solutions and small initial data, by Kato's result $(K4')$. In particular, we have
\begin{equation} \label{LpRndcy}
\lim_{t \to \infty} \|u(t)\|_{L^2(\R^3)} = 0.
\end{equation}
\end{remark}

Moving now to the setting of a complete Riemannian manifold, we first mention the work of Mitrea and Taylor in \cite{MitreaTaylor}, where they considered the inhomogenous form of \eqref{NSH} in the setting of Lipschitz subdomains of compact Riemannian manifolds and proved local existence and uniqueness results for mild solutions (see also \cite{michael1999partial}).

Then in \cite{Pierfelice}, Pierfelice considered the Navier-Stokes equation on a class of non-compact and complete Riemannian manifolds with positive injectivity radius and satisfying certain curvature bounds (see \cite{Pierfelice} for details). In this setting, Pierfelice proved dispersive and smoothing estimates for the semigroup $e^{tL}$ that both parallel and improve upon known dispersive and smoothing estimates for the heat semigroup on $\R^n$ (see \cite{KatoMild}, for example). These estimates then allowed Pierfelice to extend Kato's well-posedness result $(K1)$, specifically \eqref{mainspace2}, and the global in time result $(K2)$ for small initial data to the setting of complete Riemannian manifolds where the Ricci operator is a negative constant scalar multiple of the metric. We state the result as follows, where $c_n(t) = C(n)\text{max }(t^{-n/2},1)$ for some constant $C(n)$ depending only on the dimension.

\begin{thm}\cite{Pierfelice}\label{PFthm}
Let $M$ be a complete Riemannian manifold where the Ricci operator is a negative constant scalar multiple of the metric $g$, and let $a \in L^n(M)$ with $d^* a =0$. Then there exists $T>0$ and a unique solution $u$ of Navier-Stokes on $M$ such that
\begin{equation} \label{mainspace1}
\begin{split}
u &\in BC\big([0,T), L^n(M)\big) \\
c_n(t)^{-\big(\frac{1}{n} -\frac{1}{q} \big)}e^{t\beta} u &\in BC\big([0,T), L^q(M)\big), \quad n < q < \infty,
\end{split}
\end{equation} 
where $\beta >0$ depends on $n$ and $q$ and such that we have continuous dependence on the initial data. Moreover, there is $\lambda > 0$ such that if $\|a\|_{L^n(\mathbb{H}^n)} \leq \lambda$, then the solutions are global in time. 
\end{thm}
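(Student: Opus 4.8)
The plan is to recast Theorem~\ref{PFthm} as a fixed-point problem for the integral equation \eqref{INT}--\eqref{inteq} and to solve it by a contraction argument in a time-weighted Banach space, following Kato's scheme for $\R^n$ but exploiting the exponential decay built into $e^{tL}$ when the Ricci operator is a negative scalar multiple of $g$. Fix $q$ with $n<q<\infty$; for $0<T\le\infty$ I would let $\X_T$ be the space of 1-forms $u$ on $M$ for which
\be
\norm{u}_{\X_T}=\sup_{0<t<T}\norm{u(t)}_{L^n(M)}+\sup_{0<t<T}c_n(t)^{-\left(\frac1n-\frac1q\right)}e^{t\beta}\norm{u(t)}_{L^q(M)}
\ee
is finite, together with the analogous weighted bound on $\nabla u$ needed to control the nonlinearity. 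The first step would be the linear estimate: invoking Pierfelice's dispersive and smoothing bounds for $e^{tL}$ — schematically $\norm{e^{tL}f}_{L^q}\lesssim c_n(t)^{\frac1p-\frac1q}e^{-t\beta}\norm{f}_{L^p}$ and $\norm{\nabla e^{tL}f}_{L^q}\lesssim c_n(t)^{\frac1p-\frac1q}t^{-1/2}e^{-t\beta}\norm{f}_{L^p}$ — one shows $u_0(t)=e^{tL}a\in\X_T$ with $\norm{u_0}_{\X_T}\lesssim\norm{a}_{L^n(M)}$, \emph{uniformly in} $T$, including $T=\infty$, which is precisely where the negatively curved case differs from $\R^n$. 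Strong $L^n$-continuity of $e^{tL}$ at $t=0$ (to be established separately, as the author notes) would give $u_0(0)=a$ and the vanishing of the weighted $L^q$ norm as $t\to0^+$.

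The second step would be the bilinear estimate for the Duhamel term. Writing $B(u,v)(t)=-\int_0^t e^{(t-s)L}\mathbb{P}\big(\text{div}(u^\#\otimes v^\#)\big)^\flat(s)\,ds$ and transferring the divergence onto the semigroup, I would estimate $\nabla e^{(t-s)L}$ from $L^{q/2}$ to $L^q$ for the $L^q$-component of the norm (and from a suitable intermediate space to $L^n$ for the $L^n$-component), use boundedness of $\mathbb{P}$ on $L^q(M)$ and $L^n(M)$ — which on $M$ is not automatic and must be supplied by the spectral-theoretic facts the author develops — and Hölder's inequality $\norm{u^\#\otimes v^\#}_{L^{q/2}}\le\norm{u}_{L^q}\norm{v}_{L^q}$. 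The resulting time integral would be of the form $\int_0^t c_n(t-s)^{\sigma}(t-s)^{-1/2}e^{-(t-s)\beta}\,c_n(s)^{-2\left(\frac1n-\frac1q\right)}e^{-2s\beta}\,ds$ multiplied by $\norm{u}_{\X_T}\norm{v}_{\X_T}$; the exponents are arranged, as in Kato, so that the singularities at $s=0$ and $s=t$ are integrable, and the exponential factors then force convergence on all of $[0,\infty)$ with the correct weight, yielding $\norm{B(u,v)}_{\X_T}\lesssim\norm{u}_{\X_T}\norm{v}_{\X_T}$ with a constant independent of $T$.

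With these two ingredients the standard quadratic fixed-point lemma applies: the map $u\mapsto u_0+B(u,u)$ has a unique small fixed point in $\X_T$ provided $\norm{u_0}_{\X_T}$ is small compared with the reciprocal of the bilinear constant, and this fixed point depends continuously on $u_0$, hence on $a$. For general $a\in L^n(M)$ one first runs the argument on a short interval, making the part of $\norm{u_0}_{\X_T}$ that drives the nonlinearity small by choosing $T$ small (using density of smooth forms in $L^n(M)$ together with the smoothing estimate); when $\norm{a}_{L^n(M)}\le\lambda$ for $\lambda$ depending only on the bilinear constant, the $T$-uniform linear bound from the first step lets one take $T=\infty$ outright, producing the global solution. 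I expect the main obstacle to be exactly the second step: making the bilinear estimate genuinely uniform up to $T=\infty$, which is where the exponential decay $e^{-t\beta}$ of $e^{tL}$ on negatively curved $M$ is indispensable, and which also forces one to know that $\mathbb{P}$ is bounded on and commutes appropriately with $e^{tL}$ on $L^q(M)$ — properties that, unlike on $\R^n$, require proof.
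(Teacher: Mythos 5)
This theorem is quoted from Pierfelice and not re-proved in the paper, but your proposal follows essentially the same route that both Pierfelice and the paper's own analogue (Theorem \ref{EUthm}) take: a time-weighted fixed point / Picard iteration built on the dispersive and smoothing estimates for $e^{tL}$, the $L^p$-boundedness of $\mathbb{P}$ and its commutation with the semigroup, H\"older applied to $u^{\#}\otimes u^{\#}$ after transferring the divergence onto the semigroup, and Beta-function integrability of the time singularities, with the exponential decay supplying the $T$-uniformity needed for the global statement. The only blemish is that the $\nabla u$ component you include in $\X_T$ is never actually used once the nonlinearity is estimated through $e^{(t-s)L}\nabla^*(u^{\#}\otimes u^{\#})$ — the paper makes exactly this point in Section \ref{cmpcnt} — so it can simply be dropped.
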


Examining \eqref{mainspace2} and \eqref{mainspace1}, we see that for $0 \leq t \leq 1$, Pierfelice's result for $\mathbb{H}^n$ coincides with Kato's \eqref{mainspace2} in Theorem \ref{84thm}. However, for the large time regime $t >1$, Pierfelice's result offers an improvement on Kato's for the case $q>n$, giving exponential decay in time as opposed to decay according to an inverse power of $t$. We also note that Pierfelice's result does not include $q=\infty$, whereas Kato's does. 

In fact, Pierfelice does more in \cite{Pierfelice} and extends Theorem \ref{PFthm} to arbitrary non-compact and complete Riemannian manifolds with positive injectivity radius and satisfying certain more general curvature properties (see \cite{Pierfelice} for more information). In this case, it is required that the initial data also satisfy
\begin{equation}
a \in L^n(M) \cap L^2(M).
\end{equation}
As well, for global existence in this scenario, both the $L^n$ and the $L^2$ norms are required to be small, rather than just the $L^n$ norm. 

In what follows, we will show that Pierfelice's dispersive and smoothing estimates proved for the semigroup $e^{tL}$ on a specific class of Riemannian manifolds allow extension of Kato's Theorem \ref{84thm} to the setting of $\mathbb{H}^n$, including all results about the total covariant derivative $\nabla u$ and all global in time decay results. Additionally, we will show that in extending the results $(K1)$, $(K3)$, and $(K4)$ of Theorem \ref{84thm}, it is possible to achieve both Kato's original power of $t$-type decay, and exponential time decay in the spirit of Pierfelice's Theorem \ref{PFthm}. 

This exponential decay is an improvement over the Euclidean theory presented by Kato in \cite{KatoMild}, and has the added benefit of greatly simplifying the proofs of the extensions of results $(K2')$ and $(K4')$ from Theorem \ref{84thm} to the setting of $\mathbb{H}^n$. The exponential decay also offers a wider range of space-time $L^rL^q$ spaces than those specified by \eqref{gigares} of Theorem \ref{84thm}, including the cases $q =n$, and for the situation when $a \in L^n(\H^n) \cap L^p(\H^n)$, $q=p$, which are not covered by Kato's results in \cite{KatoMild}. 

As well, the exponential time decay allows us to show
\begin{equation} \label{Lnnormdecay}
\lim_{t \to \infty} \|u(t)\|_{L^n(\mathbb{H}^n)} = 0, \quad n \geq 2
\end{equation}
which, as discussed above, was a result Kato was unable to show in \cite{KatoMild} on $\R^n$ for $n\geq 3$. However, as is the case for Pierfelice in \cite{Pierfelice}, we do not get the case $q=\infty$ in \eqref{mainspace2} and \eqref{mainDspace2}.


\subsection{Main results}
The following theorems collect together the statements to be proven. 

\begin{thm}\label{EUthm}
Let $a \in L^n(\mathbb{H}^n)$, $n\geq 2$, with $d^* a =0$. Then there exists $T>0$ and a unique solution $u$ of \eqref{NSH'} such that
\begin{align}
 \label{mainspace} t^{\big(\frac{1}{2} -\frac{n}{2q} \big)}e^{t\beta } u &\in BC\big([0,T), L^q(\mathbb{H}^n)\big), \quad n \leq q < \infty, \\
 \label{mainDspace} t^{1-\frac{n}{2q}}e^{t\beta'' }\nabla u  &\in BC\big([0,T), L^q(\mathbb{H}^n)\big),  \quad n \leq q < \infty, 
\end{align}
both with values zero at $t=0$ except for $q = n$ in \eqref{mainspace}, in which case $u(0) = a$, and where $\beta >0$ appearing in \eqref{mainspace} depends on $n$, $q$, and any fixed $0< \delta < 1$, and where $\beta''>0$ appearing in \eqref{mainDspace} depends only on $n$, $q$, and a fixed $0 < \delta < 1$ chosen so that $q<\frac{n}{1-\delta}$. Additionally, we have continuous dependence on the initial data, and there is $\lambda > 0$ such that if $\|a\|_{L^n(\mathbb{H}^n)} \leq \lambda$, then the solution is global in time. 
\end{thm}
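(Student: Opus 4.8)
The plan is to solve the integral equation \eqref{INT} by a contraction mapping argument in the spirit of Fujita--Kato, using Pierfelice's dispersive and smoothing estimates for $e^{tL}$ in place of the Euclidean heat kernel bounds, together with the boundedness of $\mathbb{P}$ on $L^p$ and its commutation with $e^{tL}$ and $d^*$ (the spectral-theoretic facts established earlier in the paper). First I would fix once and for all a single auxiliary exponent $q_0$ with $n<q_0<\min\!\big(2n,\,n/(1-\delta)\big)$ for a suitable $0<\delta<1$, and, for $T\in(0,\infty]$, introduce the space $X_T$ of $1$-forms $u$ on $(0,T)\times\mathbb{H}^n$ for which
\[
\|u\|_{X_T} := \sup_{0<t<T}\|u(t)\|_{L^n} + \sup_{0<t<T} t^{\frac12-\frac{n}{2q_0}}e^{\beta t}\|u(t)\|_{L^{q_0}} + \sup_{0<t<T} t^{1-\frac{n}{2q_0}}e^{\beta'' t}\|\nabla u(t)\|_{L^{q_0}}
\]
is finite, with the additional requirements $u\in C([0,T),L^n)$ and that the two weighted quantities vanish as $t\to0^+$; the rates $\beta,\beta''>0$, depending on $n,q_0,\delta$, are chosen small enough relative to the semigroup decay rates (see below). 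Only this one auxiliary exponent is needed to run the fixed point; the full range $n\le q<\infty$ in \eqref{mainspace}--\eqref{mainDspace} is recovered afterward by substituting the solution back into Duhamel's formula.

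Second I would collect the linear estimates: for suitable $1<p\le q<\infty$ and decay rates $\gamma=\gamma(p,q)>0$ (which I can arrange to exceed the weight rates in use), Pierfelice's bounds give $\|e^{tL}f\|_{L^q}\lesssim c_n(t)^{\frac1p-\frac1q}e^{-\gamma t}\|f\|_{L^p}$ and $\|\nabla e^{tL}f\|_{L^q}\lesssim t^{-1/2}c_n(t)^{\frac1p-\frac1q}e^{-\gamma t}\|f\|_{L^p}$, and, combined with the mapping properties of $\mathbb{P}$, also $\|e^{tL}\mathbb{P}(\text{div }F)^{\flat}\|_{L^q}\lesssim t^{-1/2}c_n(t)^{\frac1p-\frac1q}e^{-\gamma t}\|F\|_{L^p}$ and $\|\nabla e^{tL}\mathbb{P}(\text{div }F)^{\flat}\|_{L^q}\lesssim t^{-1}c_n(t)^{\frac1p-\frac1q}e^{-\gamma t}\|F\|_{L^p}$ for a $2$-tensor $F$. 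Applying these with $p=n$ to the free solution $u_0(t)=e^{tL}a$ gives $\|u_0\|_{X_T}\lesssim\|a\|_{L^n}$ with constant independent of $T$; strong continuity of $e^{tL}$ on $L^n$ (established earlier) gives $u_0\in C([0,T),L^n)$ with $u_0(0)=a$, while density of smooth compactly supported $1$-forms in $L^n$ together with the uniform bounds yields the vanishing of the weighted pieces as $t\to0^+$. Commutation of $\mathbb{P}$ with $e^{tL}$ shows $\mathbb{P}u_0=u_0$, so $d^* u_0=0$. It is precisely the factor $e^{-\gamma t}$ that makes these bounds, and the bilinear bound below, uniform in $T$ up to $T=\infty$; this is where $\mathbb{H}^n$ is genuinely more favorable than $\R^n$.

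Third, the main step, I would prove the bilinear estimate $\|G(u,v)\|_{X_T}\le C\|u\|_{X_T}\|v\|_{X_T}$ with $C$ independent of $T$, where $G(u,v)(t)=-\int_0^t e^{(t-s)L}\mathbb{P}(\text{div}(u^{\#}\otimes v^{\#}))^{\flat}\,ds$. For the $L^n$ and $L^{q_0}$ pieces one inserts the estimate for $e^{(t-s)L}\mathbb{P}(\text{div}\,\cdot\,)^{\flat}$ under the integral and bounds $\|u(s)^{\#}\otimes v(s)^{\#}\|_{L^{q_0/2}}\le\|u(s)\|_{L^{q_0}}\|v(s)\|_{L^{q_0}}$; for the $\nabla u$ piece one splits the $s$-integral at $t/2$, using near $s=t$ the estimate with $\text{div}$ placed on $u^{\#}\otimes v^{\#}$ (so only a $(t-s)^{-1/2}$ singularity appears, at the cost of a factor $\|\nabla u\|_{L^{q_0}}$ or $\|\nabla v\|_{L^{q_0}}$) and, for $s<t/2$, the estimate with both derivatives on the semigroup, where $(t-s)^{-1}$ is controlled since $t-s\ge t/2$ there. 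Substituting the weighted bounds on $u,v$ reduces each piece to a scalar integral $\int_0^t (t-s)^{-a}e^{-\gamma(t-s)}s^{-b}e^{-\beta s}\,ds$ with $a,b\in(0,1)$ and $a+b$ at most the scaling-critical value $1$: this is a convergent Beta-type integral, and after extracting the appropriate power of $t$ and factor $e^{-\beta t}$ (resp. $e^{-\beta'' t}$), the leftover exponential makes the remainder uniformly bounded on $(0,\infty)$, reproducing exactly the weight demanded by $X_T$, with the vanishing at $t=0$ read off from the same estimate. Commutation of $\mathbb{P}$ again gives $d^* G(u,v)=0$. With $\|u_0\|_{X_T}$ small (automatic for small $T$, and valid for all $T$ when $\|a\|_{L^n}$ is small), the standard fixed-point lemma applied to $u=u_0+G(u,u)$ produces a unique solution in a small ball of $X_T$; the same bilinear bound gives Lipschitz dependence of the fixed point on $u_0$, hence continuous dependence on $a$, and uniqueness in the full class by a Gronwall argument on a short interval. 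Finally, feeding $u\in X_T$ back into Duhamel's formula and applying the linear estimates with target exponent $q$ (and a $\delta$ satisfying $q<n/(1-\delta)$ in the $\nabla u$ statement) yields \eqref{mainspace}--\eqref{mainDspace} for all $n\le q<\infty$.

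I expect the main obstacle to be the bilinear estimate with constants independent of $T$, which requires tracking simultaneously the time-power weights $t^{\frac12-\frac{n}{2q}}$, the exponential weights $e^{\beta t}$, and the semigroup decay $e^{-\gamma t}$, and choosing $q_0$, the rates $\beta,\beta''$, and $\delta$ so that every Beta integral converges at both endpoints $s=0$ and $s=t$ and the surviving exponential is integrable at $s=\infty$; near the borderline exponents $q=n$ and $q=n/(1-\delta)$ the relevant powers become critical and the freedom to let $\beta,\beta''$ depend on $\delta$ must be used carefully.
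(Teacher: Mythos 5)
Your overall architecture is viable, but there is one concrete misstep in the closing of the fixed point: the claim that $\|u_0\|_{X_T}$ is ``automatic[ally] small for small $T$.'' The $L^n$ component of your norm satisfies $\sup_{0<t<T}\|e^{tL}a\|_{L^n}\to\|a\|_{L^n}$ as $T\to 0^+$ (by strong continuity), not $0$, so for large data you cannot make the full $X_T$-norm of the free solution small by shrinking $T$, and the ``standard fixed-point lemma'' does not apply as stated. The standard repair is available and is in fact already latent in your own bilinear sketch: every component of $G(u,v)$ is bounded by products of only the \emph{weighted} components of the norm (the $L^n$ output uses $\|u(s)\|_{L^{q_0}}\|v(s)\|_{L^{q_0}}$ with $q_0/2\le n$; the $\nabla$ output uses one weighted gradient factor and one weighted $L^{q_0}$ factor), and those weighted components of $u_0$ do vanish as $T\to0^+$ by density plus the uniform bounds. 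So you must run the contraction in the set where only the vanishing components are small and the $L^n$ component is merely bounded, and state the bilinear estimate in that sharper form; as written, the local existence step for large $L^n$ data has a gap. (For small $\|a\|_{L^n}$ your argument is fine as written.)

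Beyond that, your route is genuinely different from the paper's, in a way worth noting. You perform a single contraction in a composite space carrying $u$ in $L^n$, weighted $u$ in $L^{q_0}$, and weighted $\nabla u$ in $L^{q_0}$ simultaneously --- essentially Kato's original scheme, with the $t/2$-splitting of the Duhamel integral to handle the gradient. The paper instead runs the Picard iteration in the single weighted norm $\sup_t t^{(1-\delta)/2}e^{t\beta}\|u_k(t)\|_{L^{n/\delta}}$, exploiting the divergence-form smoothing estimate $e^{tL}\nabla^*$ applied to $u^{\#}\otimes u^{\#}$ so that no gradient of $u$ ever enters the estimate of $Gu$; the $q=n$ bound and all the $\nabla u$ bounds are then obtained a posteriori from the integral equation by Gr\"onwall's inequality (which works because the estimate \eqref{DGest} is \emph{linear} in $\nabla u$). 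The paper's decoupling buys a shorter argument, a single time of existence and a single smallness threshold for all the conclusions, and avoids your $t/2$-splitting entirely (the ``div on $u\otimes u$'' form of the gradient estimate already yields a convergent Beta integral over the whole interval). Your coupled scheme buys a self-contained bilinear estimate and Lipschitz dependence of the fixed point in the full norm, at the cost of re-entangling $u$ and $\nabla u$. Your remaining ingredients --- the exponential weights chosen below the semigroup decay rates, the Beta-integral bookkeeping, the recovery of the full range $n\le q<\infty$ by re-substitution into Duhamel, and uniqueness and continuous dependence via Gr\"onwall --- match the paper's.
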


\begin{remark}
This is similar to Pierfelice's Theorem \ref{PFthm} given above for the case of $\mathbb{H}^n$, though now expanded to include information about $\nabla u$. As well, this result has exponential decay also for the $L^q(\H^n)$ norm of a solution for $n\leq q < \infty$, whereas Pierfelice's Theorem \ref{PFthm} only gives this for $n < q <\infty$. Note also that, rather than involving the aforementioned function
\[
c_n(t) = C(n)\max\{t^{-n/2,1}\},
\]
the results \eqref{mainspace} and \eqref{mainDspace} in Theorem \ref{EUthm} combine the power of $t$-type time decay from $(K1)$ in Kato's \ref{84thm} with exponential time decay, and this is valid for the full interval of existence $[0,T)$, with no split happening at $t=1$. As a result, the constants $\beta$ appearing in the exponential terms in \eqref{mainspace} and \eqref{mainDspace} from Theorem \ref{EUthm} are possibly different than the constant appearing in the exponential term from Pierfelice's Theorem \ref{PFthm}. For more details on this, see Section \ref{estimatesused}.  

Also, as mentioned above, unlike the case of Kato's Theorem \ref{84thm}, we do not have $q=\infty$ in Theorem \ref{EUthm}, though we hope to study this case further in future works.
\end{remark}

\begin{thm}\label{PSL}
For the solution $u$ and time of existence $T>0$ from Theorem \ref{EUthm}, there exists some $0 < T_1 \leq T$ such that
\begin{equation} \label{LrLq}
u \in L^{r}\big((0,T_1), L^q(\mathbb{H}^n)\big) \quad\quad \text{with } \quad \frac{1}{2} - \frac{n}{2q} = \frac{1}{r}, \quad n < q < \frac{n^2}{n-2}.
\end{equation}
As well, there is $\lambda_1>0$ such that if $\|a\|_{L^n(\mathbb{H}^n)} \leq \lambda_1$, then $T_1$ can be extended to $+\infty$.
\end{thm}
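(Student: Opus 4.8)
The plan is to split the mild solution as $u = u_0 + Gu$ with $u_0(t) = e^{tL}a$ and $Gu$ as in \eqref{inteq}, and to prove the membership \eqref{LrLq} for each piece, in the spirit of Giga~\cite{giga_semilinear}. The point to keep in mind is that Pierfelice's pointwise-in-time dispersive and smoothing bounds, inserted directly, only place $u$ in the \emph{weak} Lorentz space $L^{r,\infty}$ in time: with $\tfrac1r = \tfrac12 - \tfrac{n}{2q}$, the natural temporal weight $t^{-(1/2 - n/(2q))}$ has $r$-th power $t^{-1}$, which lies in weak-$L^1$ but not $L^1$ near $t = 0$. Upgrading to the strong space $L^r$ is the whole content of the statement, and it is carried out by real interpolation for the linear part and by a Hardy--Littlewood--Sobolev estimate in the time variable for the bilinear part; the assumption $n < q < \tfrac{n^2}{n-2}$ is precisely the endpoint at which both arguments still close.

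\textbf{The linear part.} From Pierfelice's dispersive estimate (see \cite{Pierfelice}), for every $\tilde p$ in a small interval around $n$ with $\tilde p \le q$ and a common $\beta > 0$,
\[
\|e^{tL}a\|_{L^q(\mathbb{H}^n)} \;\lesssim\; \min\!\big\{\, t^{-\frac{n}{2}\left(\frac1{\tilde p}-\frac1q\right)},\; 1 \,\big\}\; e^{-\beta t}\;\|a\|_{L^{\tilde p}(\mathbb{H}^n)},
\]
so that, absorbing large times into the exponential factor, $a \mapsto \big(t \mapsto e^{tL}a\big)$ maps $L^{\tilde p}(\mathbb{H}^n)$ boundedly into $L^{\tilde r, \infty}\big((0,\infty), L^q(\mathbb{H}^n)\big)$ with $\tfrac1{\tilde r} = \tfrac{n}{2}\big(\tfrac1{\tilde p} - \tfrac1q\big)$. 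Fixing $\tilde p_0 < n < \tilde p_1$ in this range and interpolating with parameter $n$ — using $L^n(\mathbb{H}^n) = \big(L^{\tilde p_0}(\mathbb{H}^n), L^{\tilde p_1}(\mathbb{H}^n)\big)_{\theta, n}$ and the standard real interpolation of weak-Lebesgue spaces in the time variable — one obtains that $a \mapsto e^{tL}a$ maps $L^n(\mathbb{H}^n)$ into $L^{r, n}\big((0,\infty), L^q(\mathbb{H}^n)\big)$ with $\tfrac1r = \tfrac12 - \tfrac{n}{2q}$. Since $n < q < \tfrac{n^2}{n-2}$ forces $r > n$, we have $L^{r,n} \hookrightarrow L^r$, and hence $u_0 \in L^r\big((0,\infty), L^q(\mathbb{H}^n)\big)$ with norm $\lesssim \|a\|_{L^n(\mathbb{H}^n)}$.

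\textbf{The bilinear part.} Pushing the divergence onto the semigroup and using the smoothing estimate for $e^{\tau L}$ together with the boundedness and commutation properties of $\mathbb{P}$ established earlier in the paper, Hölder's inequality in $x$ with exponent $q/2$ gives
\[
\big\| e^{\tau L}\mathbb{P}\big(\mathrm{div}(v^{\#}\otimes w^{\#})\big)^{\flat} \big\|_{L^q(\mathbb{H}^n)} \;\lesssim\; \tau^{-\frac12 - \frac{n}{2q}}\; e^{-\beta\tau}\; \|v\|_{L^q(\mathbb{H}^n)}\,\|w\|_{L^q(\mathbb{H}^n)}.
\]
Since $q > n$ makes $\tfrac12 + \tfrac{n}{2q} < 1$, the Hardy--Littlewood--Sobolev inequality in time shows $G$ to be a bounded bilinear map from $L^r\big((0,T_1), L^q(\mathbb{H}^n)\big) \times L^r\big((0,T_1), L^q(\mathbb{H}^n)\big)$ into $L^r\big((0,T_1), L^q(\mathbb{H}^n)\big)$, the relation $\tfrac1r = \tfrac12 - \tfrac{n}{2q}$ being exactly what balances the exponents. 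Hence, taking $T_1$ small enough that $\|u_0\|_{L^r((0,T_1), L^q)}$ falls below the threshold set by the bilinear constant, Picard iteration converges in $L^r\big((0,T_1), L^q(\mathbb{H}^n)\big)$ to a solution $\tilde u$ of \eqref{INT}. A short bootstrap starting from $\tilde u = u_0 + G(\tilde u, \tilde u)$ — using the strong continuity of $e^{tL}$ on $L^n(\mathbb{H}^n)$ and further Hardy--Littlewood--Sobolev bounds on the bilinear term — shows that $\tilde u \in C\big([0,T_1), L^n(\mathbb{H}^n)\big)$ and satisfies \eqref{mainspace}, so the uniqueness in Theorem \ref{EUthm} identifies $\tilde u$ with $u$; thus $u \in L^r\big((0,T_1), L^q(\mathbb{H}^n)\big)$. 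Finally, when $\|a\|_{L^n(\mathbb{H}^n)} \le \lambda_1$ is small the linear estimate makes $\|u_0\|_{L^r((0,\infty), L^q)} \lesssim \|a\|_{L^n}$ small, the factors $e^{-\beta\tau}$ keep the Hardy--Littlewood--Sobolev estimate for $G$ valid over all of $(0,\infty)$, and the same argument runs with $T_1 = \infty$.

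\textbf{Expected main obstacle.} The crux is precisely the passage from weak-type to strong-type integrability in time; everything else is the bookkeeping of scaling exponents. One must check that the real-interpolation step for $u_0$ and the Hardy--Littlewood--Sobolev step for $Gu$ both remain admissible right up to $q = \tfrac{n^2}{n-2}$ — equivalently $r = n$, the borderline where $L^{r,n} \hookrightarrow L^r$ still holds and where the time convolution in the bilinear estimate is still of non-endpoint type — and one must reconcile the solution produced by the auxiliary fixed point in $L^r\big((0,T_1), L^q(\mathbb{H}^n)\big)$ with the solution of Theorem \ref{EUthm} through a uniqueness statement valid in a class containing both.
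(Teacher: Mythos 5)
Your proposal is correct and follows essentially the same route as the paper: the linear part is handled by real interpolation of the weak-type dispersive bounds (the paper phrases this as the Marcinkiewicz interpolation theorem applied to $Uf = \|e^{tL}f\|_{L^q(\mathbb{H}^n)}$, with weak types $(\tilde n,\tilde r)$ and $(q,\infty)$ for $\tilde n = n-\varepsilon$ — the same real-interpolation step as your Lorentz-space argument, with the constraint $q < \tfrac{n^2}{n-2}$ entering in the same place), and the bilinear part by H\"older with exponent $q/2$ together with the one-dimensional Hardy--Littlewood--Sobolev inequality in time, closed by choosing $T_1$ (or $\lambda_1$) so that $\|u_0\|_{L^r((0,T_1),L^q)}$ is small. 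The only difference worth noting is that the paper sidesteps your final identification step ($\tilde u = u$ via a bootstrap and a uniqueness statement) by proving the $L^r L^q$ bound directly on the Picard iterates $u_k$ of Theorem \ref{EUthm} and passing to their already-identified limit.
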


\begin{remark}
This result is the same as in the Euclidean setting and as we will see in the course of proving Theorem \ref{PSL}, the requirements on $r$ and $q$ listed in \eqref{LrLq} are a consequence of the Marcinkiewicz interpolation theorem. We also mention here that, as noted by Stein in Appendix B of \cite{Stein}, the Marcinkiewicz interpolation theorem is valid if the underlying measure space $\R^n$ of $L^p(\R^n)$ is replaced by a more general measure space, such as $\mathbb{H}^n$. 

We note here that we do not currently have exponential time decay for Theorem \ref{PSL}. However, the exponential decay present in \eqref{mainspace} of Theorem \ref{EUthm} allows us to expand the range of $L^rL^q$ spaces relative to those specified in Theorem \ref{PSL}, which is the subject of the next two theorems. 
\end{remark}

\begin{thm} \label{newLrLq}
For the solution $u$ and time of existence $T>0$ from Theorem \ref{EUthm},
\be \label{newPSL}
u \in L^{r}\big((0,T), L^q(\mathbb{H}^n)\big),
\ee
where $1 \leq r < \infty$ and $n < q$ satisfy the inequality
\be \label{newPSL'}
\frac{1}{2} - \frac{n}{2q} < \frac{1}{r}.
\ee
Moreover, if $\|a\|_{L^n(\mathbb{H}^n)} \leq \lambda$, where $\lambda$ is the constant from Theorem \ref{EUthm}, then this estimate holds for $T= +\infty$.
\end{thm}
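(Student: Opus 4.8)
The plan is to obtain Theorem~\ref{newLrLq} as a direct consequence of the pointwise-in-time decay estimate \eqref{mainspace} of Theorem~\ref{EUthm}, so essentially no new analysis is required beyond an elementary scalar integral estimate. Write $\alpha = \tfrac12 - \tfrac{n}{2q}$, and recall that $0 < \alpha < \tfrac12$ since $n < q < \infty$ (we take $q$ finite, as only then is \eqref{mainspace} available). Fix any $\delta \in (0,1)$; because we invoke only \eqref{mainspace} and not the gradient estimate \eqref{mainDspace}, no constraint linking $q$ and $\delta$ is needed. Theorem~\ref{EUthm} then supplies a constant $\beta = \beta(n,q,\delta) > 0$ and, since $t^{\alpha} e^{\beta t} u \in BC\big([0,T), L^q(\mathbb{H}^n)\big)$, a finite quantity
\[
M := \sup_{0 \le t < T}\, t^{\alpha} e^{\beta t}\, \| u(t) \|_{L^q(\mathbb{H}^n)} < \infty ,
\]
equivalently the bound $\| u(t) \|_{L^q(\mathbb{H}^n)} \le M\, t^{-\alpha} e^{-\beta t}$ for all $t \in (0,T)$. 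In particular $t \mapsto \| u(t) \|_{L^q(\mathbb{H}^n)}$ is continuous on $(0,T)$, hence measurable.

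Next I would raise this bound to the $r$-th power and integrate, reducing matters to showing
\[
\int_0^T \| u(t) \|_{L^q(\mathbb{H}^n)}^{r}\, dt \;\le\; M^{r} \int_0^T t^{-\alpha r} e^{-\beta r t}\, dt \;<\; \infty .
\]
Split the right-hand integral at $t = \min\{1,T\}$. On $\big(0,\min\{1,T\}\big)$ we bound $e^{-\beta r t} \le 1$, so this piece is dominated by $\int_0^1 t^{-\alpha r}\, dt$, which is finite precisely when $\alpha r < 1$, i.e.\ when $\tfrac12 - \tfrac{n}{2q} < \tfrac1r$ --- exactly hypothesis \eqref{newPSL'}. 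On the remaining interval $\big[\min\{1,T\},T\big)$: if $T < \infty$ the integrand is bounded by $1$ on a set of finite measure and hence integrable; if $T = +\infty$ --- the small-data case, in which Theorem~\ref{EUthm} already furnishes a global solution --- we bound $t^{-\alpha r} \le 1$ and use $\int_1^\infty e^{-\beta r t}\, dt < \infty$. Adding the two contributions gives $u \in L^{r}\big((0,T), L^q(\mathbb{H}^n)\big)$, and in the small-data case one may take $T = +\infty$, which is the assertion.

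I do not expect a genuine obstacle here; the proof is a one-line appeal to Theorem~\ref{EUthm} followed by bookkeeping on $\int_0^\infty t^{-\alpha r} e^{-\beta r t}\, dt$. The two points worth stressing are: (i) the origin singularity $t^{-\alpha r}$ is integrable \emph{exactly} under the stated strict inequality, which is why \eqref{newPSL'} takes this form and why, in contrast to Theorem~\ref{PSL}, equality is excluded and no upper bound on $q$ of the type $q < n^2/(n-2)$ appears; and (ii) the passage to $T = +\infty$ uses the exponential-in-time decay encoded in \eqref{mainspace}, which has no analogue in Kato's Euclidean Theorem~\ref{84thm} and is precisely what lets one reach this wider family of space-time $L^r L^q$ spaces on $\mathbb{H}^n$.
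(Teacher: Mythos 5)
Your proposal is correct and follows essentially the same route as the paper: invoke the pointwise bound $\|u(t)\|_{L^q(\mathbb{H}^n)} \le M\, t^{-(\frac12 - \frac{n}{2q})} e^{-\beta t}$ supplied by \eqref{mainspace}, split the time integral at $1$, use integrability of $t^{-\alpha r}$ near the origin (exactly condition \eqref{newPSL'}) and the exponential decay on the tail. The only cosmetic difference is that the paper fixes $\delta = n/q$ when quoting the constant $M$ from \eqref{M0global}, whereas you simply cite the statement of Theorem \ref{EUthm}; this changes nothing.
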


\begin{remark} \label{lrlqrem1}
We first observe that while this theorem is true locally on $\R^n$, to the best of our knowledge, it has not been shown globally, and such a global result is not achievable by the estimates used in Kato's theory as outlined in \cite{KatoMild}. For more details on this, see Section \ref{cmpcnt}.

Next we note that this theorem asserts that the $L^q$ norm of a solution $u$ can be placed in any $L^r$ space in time for $r \in [1,\infty)$ by choosing a suitable $n < q$, and that this can be done globally. In contrast, taking $n=3$ in Theorem \ref{PSL}, it follows that
\be
r = \frac{2q}{q-3}, \quad 3 < q < 9,
\ee
which only gives a possible range for $r$ of $(3, \infty)$. A similar argument shows for $n=2$ that $r \in (2, \infty)$. 

Since only an analog Theorem \ref{PSL} is known to hold globally on $\R^n$, and not an analog Theorem \ref{newLrLq}, the preceding discussion shows that, when considering a global mild solution coming from small $L^n(\H^n)$ data, the range of $L^r$ spaces in time available on $\H^n$ is larger than that available on $\R^n$.
\end{remark}

\begin{thm} \label{LrLn} 
For the solution $u$ and time of existence $T>0$ from Theorem \ref{EUthm},
\be
u \in L^{r}\big((0,T), L^n(\mathbb{H}^n)\big) \quad\quad \text{for } \quad 1 \leq r < \infty.
\ee
Moreover, if $\|a\|_{L^n(\mathbb{H}^n)} \leq \lambda$, where $\lambda$ is the constant from Theorem \ref{EUthm}, then this estimate holds for $T= +\infty$.
\end{thm}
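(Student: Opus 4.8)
The plan is to deduce this directly from the endpoint $q=n$ of Theorem \ref{EUthm}. Putting $q=n$ in \eqref{mainspace}, the power of $t$ is $\tfrac{1}{2}-\tfrac{n}{2n}=0$, so that statement reduces to $e^{t\beta}u\in BC\big([0,T),L^n(\mathbb{H}^n)\big)$ for some $\beta>0$ depending on $n$ and a fixed $0<\delta<1$; equivalently, there is a constant $C$ (depending on the solution, on $n$, and on $\delta$) such that
\[
\|u(t)\|_{L^n(\mathbb{H}^n)}\le C e^{-\beta t},\qquad t\in[0,T).
\]
This pointwise-in-time exponential bound is the only input I would use; everything else is a one-line integration.

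For the local assertion ($T<\infty$) the interval $(0,T)$ has finite Lebesgue measure and $t\mapsto\|u(t)\|_{L^n(\mathbb{H}^n)}$ is continuous and bounded by $C$ on it, so for every $1\le r<\infty$
\[
\int_0^T\|u(t)\|_{L^n(\mathbb{H}^n)}^r\,dt\le C^r T<\infty,
\]
whence $u\in L^r\big((0,T),L^n(\mathbb{H}^n)\big)$. For the global assertion, if $\|a\|_{L^n(\mathbb{H}^n)}\le\lambda$ then Theorem \ref{EUthm} furnishes a global solution and the displayed exponential bound holds with $T=\infty$; integrating,
\[
\int_0^\infty\|u(t)\|_{L^n(\mathbb{H}^n)}^r\,dt\le C^r\int_0^\infty e^{-r\beta t}\,dt=\frac{C^r}{r\beta}<\infty,
\]
so $u\in L^r\big((0,\infty),L^n(\mathbb{H}^n)\big)$ for every $1\le r<\infty$, as claimed. (The same computation in fact also covers $r=\infty$, which is just the $q=n$ case of \eqref{mainspace}.)

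There is no genuinely hard step here: all of the substance lies in Theorem \ref{EUthm}, and in particular in pushing Pierfelice's dispersive and smoothing estimates to the endpoint exponent $q=n$ so that the $L^n$ norm itself decays exponentially — something unavailable in the Euclidean theory of \cite{KatoMild}, where for $L^r\big((0,T),L^n(\R^n)\big)$ the relation $\tfrac{1}{r}=\tfrac12-\tfrac{n}{2q}$ forces $r=\infty$ at $q=n$ (this is precisely the gap between \eqref{gigares} and the present statement; cf.\ Section \ref{cmpcnt}). Once the exponential $L^n$ decay is in hand, Theorem \ref{LrLn} follows at once.
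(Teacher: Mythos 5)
Your proposal is correct and follows essentially the same route as the paper: both arguments take the $q=n$ endpoint of Theorem \ref{EUthm}, i.e.\ the exponential bound $\|u(t)\|_{L^n(\mathbb{H}^n)}\le Ce^{-\beta' t}$, and then simply integrate the $r$-th power in time (the paper integrates the exponential even on a finite interval, obtaining a bound uniform in $T$, whereas you use $C^rT$ locally and reserve the exponential integral for $T=\infty$ — an immaterial difference). No gaps.
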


\begin{remark} An analog of this result is not given by Kato for $\R^n$ in \cite{KatoMild}, and to the best of our knowledge, as is the case for Theorem \ref{newLrLq}, this result only seems possible locally on $\R^n$. For more information on this, see Section \eqref{cmpcnt}.

\end{remark}

\begin{thm}\label{tmdcy1}
For the case in Theorem \ref{EUthm} when the solution $u$ is global, we have
\begin{equation} \label{tmint}
\lim_{T \to \infty} \frac{1}{T} \int_0^T \|u(t)\|_{L^n(\mathbb{H}^n)} \,dt = 0,
\end{equation}
and
\begin{equation} \label{Lndecay}
\lim_{t \to \infty} \|u(t)\|_{L^n(\mathbb{H}^n)} = 0,
\end{equation}
for all $n \geq 2$.
\end{thm}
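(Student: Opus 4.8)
The plan is to obtain both limits as immediate corollaries of the exponential time decay already contained in Theorem~\ref{EUthm}; this is exactly the simplification over the Euclidean setting promised in the introduction, so almost no new argument is required. First I would specialize \eqref{mainspace} to the endpoint exponent $q=n$. Since $\frac12-\frac{n}{2q}=0$ there, and since in the global case we may take $T=\infty$, the assertion \eqref{mainspace} says that $e^{\beta t}u\in BC\big([0,\infty),L^n(\mathbb{H}^n)\big)$ for the associated constant $\beta>0$; equivalently, there is $M<\infty$ with
\be
\|u(t)\|_{L^n(\mathbb{H}^n)}\leq M e^{-\beta t}\qquad\text{for all }t\geq 0 .
\ee

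From this bound the pointwise statement \eqref{Lndecay} is immediate, for every dimension $n\geq 2$. For the Ces\`aro statement \eqref{tmint}, I would integrate the bound to get $\int_0^\infty\|u(t)\|_{L^n(\mathbb{H}^n)}\,dt\leq M/\beta<\infty$, whence $\frac1T\int_0^T\|u(t)\|_{L^n(\mathbb{H}^n)}\,dt\leq \frac{M}{\beta T}\to 0$ as $T\to\infty$; this in fact shows the time average decays at rate $O(1/T)$. An equally short alternative, which uses only the $L^rL^q$ machinery and avoids the pointwise weighted bound, is to invoke Theorem~\ref{LrLn} with $r=1$: in the global regime it already puts $u\in L^1\big((0,\infty),L^n(\mathbb{H}^n)\big)$, and any $L^1$-in-time quantity has vanishing time average. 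I would present the first route as the main proof and record the second in a remark.

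Thus there is no genuine obstacle internal to Theorem~\ref{tmdcy1}: the whole point is that the difficulty has been pushed upstream into Theorem~\ref{EUthm}, specifically into establishing the exponential weight $e^{\beta t}$ at the borderline exponent $q=n$ (Pierfelice's Theorem~\ref{PFthm} supplies it only for $q>n$), which in turn rests on her dispersive and smoothing estimates for $e^{tL}$ together with the fixed-point argument in the proof of Theorem~\ref{EUthm}. The one point to be careful about in the writeup is to use the $q=n$ case of \eqref{mainspace}, not merely some $q>n$, since it is precisely the endpoint exponent that controls $\|u(t)\|_{L^n(\mathbb{H}^n)}$ directly. I would also note, for context, that the companion Euclidean statement $\lim_{t\to\infty}\|u(t)\|_{L^n(\R^n)}=0$ was left open by Kato for $n\geq 3$ in \cite{KatoMild} and resolved only much later on $\R^3$ in \cite{gall}, whereas here negative curvature upgrades Kato's polynomial-in-$t$ bounds to exponential ones and the result comes essentially for free.
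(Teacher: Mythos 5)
Your proposal is correct and follows essentially the same route as the paper: the paper's proof likewise invokes the global $q=n$ exponential bound $\|u(t)\|_{L^n(\mathbb{H}^n)}\leq C e^{-t\beta'}$ (the constant from the $q=n$ case of Theorem \ref{EUthm}) and then computes $\frac{1}{T}\int_0^T e^{-\beta' t}\,dt = \frac{1}{\beta' T}(1-e^{-\beta' T})\to 0$ for \eqref{tmint}, with \eqref{Lndecay} immediate. Your remark about the alternative via Theorem \ref{LrLn} with $r=1$ is a fine additional observation but is not needed.
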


\begin{remark}
As discussed above, the main purpose for Kato in proving 
\begin{equation}
\lim_{T \to \infty} \frac{1}{T} \int_0^T \|u(t)\|_{L^n(\mathbb{R}^n)} \,dt = 0,
\end{equation}
was that it allowed him to show
\begin{equation} \label{limques}
\lim_{ t \to \infty} \|u(t)\|_{L^n(\mathbb{R}^n)} =0.
\end{equation}
for the specific case $n=2$. However, Kato was not able to answer whether \eqref{limques} holds on $\R^n$ for $n\geq 3$, and it took some time until this was answered in the affirmative by Gallagher, Iftimie, and Planchon \cite{gall}, as previously mentioned, and as discussed in further detail below.
 
In the setting of $\mathbb{H}^n$, Theorem \ref{tmdcy1} implies, with the same argument as outlined by Kato in \cite{KatoMild} and discussed above, that
\begin{equation}
\lim_{ t \to \infty} \|u(t)\|_{L^2(\mathbb{H}^2)} =0.
\end{equation}
We also note that, though the result \eqref{tmint} is the same as $(K2')$ from Kato's Theorem \ref{84thm}, the proof in the setting of $\mathbb{H}^n$ is greatly simplified due to the presence of the exponential term appearing in \eqref{mainspace} from Theorem \ref{EUthm}, which we will discuss in further detail in Section \ref{cmpcnt}.

Moreover, though we include result \eqref{tmint} to show that Kato's theory in \cite{KatoMild} can be extended to $\mathbb{H}^n$, we can now directly show 
\[
\lim_{t \to \infty} \|u(t)\|_{L^n(\mathbb{H}^n)} = 0,
\]
for all $n\geq 2$, without referencing the result \eqref{tmint} at all. As well, the method of proof for this result does not depend on the dimension or the energy inequality, and uses only the exponential decay appearing in \eqref{mainspace2} from $(K1)$, which is considerably different than the situation on Euclidean space, as we discuss in further detail in Section \ref{cmpcnt}.

Also in this direction, we show explicitly that the convergence in \eqref{Lndecay} is of order $e^{-t\beta}$, where $\beta$ comes from \eqref{mainspace}. 
\end{remark}

\begin{thm} \label{Lpthm}
Let $a \in L^p (\mathbb{H}^n) \cap L^n(\mathbb{H}^n)$ with $d^* a =0$ and where $1 < p < n$, $n\geq 2$. Then the solution given by Theorem \ref{EUthm} has the following additional properties:
\begin{align}
\label{lpspace} e^{t\beta^* } u &\in BC\big([0,T), L^p(\mathbb{H}^n) \cap L^n(\mathbb{H}^n)\big), \\
\label{lpDspace} t^{1/2}e^{t\beta^{**} } \nabla u &\in BC\big([0,T), L^p(\mathbb{H}^n) \cap L^n(\mathbb{H}^n)\big),
\end{align}
where the possible different constants $\beta^* >0$ and $\beta^{**}>0$ appearing in \eqref{lpspace} and \eqref{lpDspace} depend only on $n$, $p$, and $0<\delta<1$ chosen so that $n/p+\delta < n$. Moreover, we have continuous dependence on the initial data, and if $\|a\|_{L^n(\mathbb{H}^n)} \leq \lambda$, where $\lambda$ is the same constant appearing in Theorem \ref{EUthm}, then the solution is global in time. 
\end{thm}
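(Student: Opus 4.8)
The plan is to start from the solution $u$ already produced by Theorem~\ref{EUthm} on its interval of existence $[0,T)$ --- which applies since $a\in L^p\cap L^n$ forces $a\in L^n$, $d^*a=0$ --- and to \emph{bootstrap}: insert $u$ into the integral equation \eqref{inteq} and estimate the $L^p$ norms of $u$ and of $\nabla u$ using (i)~Pierfelice's dispersive and smoothing estimates for $e^{tL}$, which carry a power of $t$ together with an exponential factor $e^{-t\beta_0}$ (write $\beta_0>0$ for the exponential-decay constant in those estimates), (ii)~the $L^q$-boundedness of $\mathbb P$ established earlier in this paper, and (iii)~the already available bounds on $\|u(s)\|_{L^n}$, $\|u(s)\|_{L^q}$ and $\|\nabla u(s)\|_{L^q}$ for an auxiliary exponent $q>n$, supplied by \eqref{mainspace}--\eqref{mainDspace}. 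Since the $L^n$ parts of \eqref{lpspace} and \eqref{lpDspace} are exactly the $q=n$ cases of \eqref{mainspace} and \eqref{mainDspace}, only the $L^p$ parts require argument, and because $u$ is literally the solution from Theorem~\ref{EUthm} no separate uniqueness or identification step is needed.

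For the free evolution $u_0(t)=e^{tL}a$, the strong continuity and the contractivity/exponential-decay properties of $e^{tL}$ on $L^p$ and on $L^n$ (established earlier, together with Pierfelice's estimates) give both $e^{tL}a\to a$ in $L^p\cap L^n$ as $t\to0^+$ and $e^{t\beta^*}\|e^{tL}a\|_{L^p\cap L^n}\le C\|a\|_{L^p\cap L^n}$ for any $\beta^*<\beta_0$; the gradient smoothing estimate $\|\nabla e^{\tau L}b\|_{L^r}\lesssim \tau^{-1/2}e^{-\tau\beta_0}\|b\|_{L^r}$ gives $t^{1/2}e^{t\beta^{**}}\|\nabla e^{tL}a\|_{L^p\cap L^n}\le C\|a\|_{L^p\cap L^n}$ for any $\beta^{**}<\beta_0$, with value $0$ at $t=0$ after the usual density argument (first for smooth data, then by approximation).

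The core of the proof is the bilinear term $B(u,u)(t)=-\int_0^t e^{(t-s)L}\mathbb P\big(\dv(u^\#\otimes u^\#)\big)^\flat(s)\,ds$. For the $L^p$ bound I would move the divergence onto the semigroup and use $\|e^{\tau L}\mathbb P\,\dv\,F\|_{L^p}\lesssim \tau^{-\gamma}e^{-\tau\beta_0}\|F\|_{L^m}$ with $F=u^\#\otimes u^\#$, H\"older $\|F(s)\|_{L^m}\le\|u(s)\|_{L^n}\|u(s)\|_{L^q}$ and $1/m=1/n+1/q$; a direct computation gives $\gamma=1-(n/2)(1/p-1/q)$, which is $<1$ precisely because $q>p$, while $\|u(s)\|_{L^q}\lesssim s^{-(1/2-n/(2q))}e^{-s\beta}$ is integrable at $s=0$. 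For $\nabla B(u,u)$ I would instead use $(\dv(u^\#\otimes u^\#))^\flat=\nabla_{u^\#}u$ (valid since $d^*u=0$), estimate $\|\nabla_{u^\#}u(s)\|_{L^m}\le\|u(s)\|_{L^n}\|\nabla u(s)\|_{L^q}$ with $\|\nabla u(s)\|_{L^q}\lesssim s^{-(1-n/(2q))}e^{-s\beta''}$, and apply $\|\nabla e^{\tau L}\mathbb P\,h\|_{L^p}\lesssim \tau^{-\gamma'}e^{-\tau\beta_0}\|h\|_{L^m}$ with $\gamma'=1-(n/2)(1/p-1/q)<1$. In each case one writes $e^{t\beta^*}=e^{(t-s)\beta^*}e^{s\beta^*}$ (respectively with $\beta^{**}$), absorbs $e^{(t-s)\beta^*}$ into $e^{-(t-s)\beta_0}$ and $e^{s\beta^*}$ into the exponential weights carried by the two factors --- legitimate once $\beta^*$ and $\beta^{**}$ are chosen smaller than $\beta_0$ and than the sums of those weights --- and what is left is an elementary beta-type integral $\int_0^t(t-s)^{-\gamma}s^{-\mu}\,ds=c\,t^{1-\gamma-\mu}$ whose net power works out to $n/(2p)-1/2>0$ (since $p<n$). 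Times the surviving decaying exponential this is bounded on $[0,T)$ and tends to $0$ as $t\to0$, which closes \eqref{lpspace} (value $a$ at $t=0$, from the free part) and \eqref{lpDspace} (value $0$ at $t=0$). Continuous dependence follows the same way: the bilinear difference $B(u_1,u_1)-B(u_2,u_2)=B(u_1-u_2,u_1)+B(u_2,u_1-u_2)$, after pairing the difference factor in $L^n$, is controlled by the $L^n$-norm continuous dependence already supplied by Theorem~\ref{EUthm}.

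Finally, for the global statement, the bilinear estimates above are driven solely by products of an $L^n$ norm of $u$ with an $L^q$ ($q>n$) norm of $u$ or of $\nabla u$, and by Theorem~\ref{EUthm} both of these are $O(\lambda)$ once $\|a\|_{L^n(\H^n)}\le\lambda$; hence no smallness of $\|a\|_{L^p}$ is needed and one may take $T=\infty$ with the \emph{same} constant $\lambda$ as in Theorem~\ref{EUthm}. The step I expect to be the main obstacle is the combined bookkeeping of the weights in the bilinear estimate: one must choose the auxiliary $q>n$ --- and the fixed $\delta$ with $n/p+\delta<n$ that pins it down along with $\beta^*,\beta^{**}$ --- so that $\gamma=1-(n/2)(1/p-1/q)<1$ and the $s=0$ blow-up rates of $\|u(s)\|_{L^q}$ and $\|\nabla u(s)\|_{L^q}$ stay integrable uniformly over the whole range $1<p<n$, and, above all, arrange the exponential weights so that the growth $e^{t\beta^*}$, $e^{t\beta^{**}}$ is strictly dominated by the decay $e^{-(t-s)\beta_0}$ the semigroup supplies --- which is precisely why $\beta^*$ and $\beta^{**}$ must be taken strictly below $\beta_0$ and may legitimately differ from each other and from the constant in Pierfelice's Theorem~\ref{PFthm}.
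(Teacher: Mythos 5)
Your overall strategy --- bootstrap the solution of Theorem \ref{EUthm} through the Duhamel formula rather than re-running a Picard iteration, observe that the $L^n$ components of \eqref{lpspace}--\eqref{lpDspace} are just the $q=n$ cases of \eqref{mainspace}--\eqref{mainDspace}, and treat $u_0$, $Gu$, $\nabla Gu$ separately using the dispersive/smoothing estimates and the $L^p$-boundedness of $\mathbb{P}$ --- is exactly the paper's. But your treatment of the bilinear term has a genuine gap. You estimate $\|u^\#\otimes u^\#(s)\|_{L^m}\le\|u(s)\|_{L^n}\|u(s)\|_{L^q}$ with $\tfrac1m=\tfrac1n+\tfrac1q$ for an auxiliary $q>n$, and then apply the smoothing estimate \eqref{smest3} from $L^m$ into $L^p$. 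That estimate (like every estimate in Theorem \ref{dsmest}) only runs from a smaller Lebesgue exponent to a larger one, so it requires $m\le p$, i.e.\ $\tfrac1n+\tfrac1q\ge\tfrac1p$. Since $q\ge n$ forces $\tfrac1n+\tfrac1q\le\tfrac2n$, your pairing is admissible only when $\tfrac1p\le\tfrac2n$, i.e.\ $p\ge n/2$. For $n\ge3$ the range $1<p<n/2$ is nonempty, and there your key step admits no valid choice of $q$: any two H\"older factors whose norms are ``known'' from Theorem \ref{EUthm} carry exponents $\ge n$, and two such factors can never produce an $L^m$ datum with $m\le p$ when $p<n/2$. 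The same obstruction hits your $\nabla Gu$ estimate (same $m$) and your continuous-dependence argument, where you again pair the difference factor in $L^n$ against an $L^q$, $q>n$, factor.

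The paper's way around this, which you should adopt, is to keep the unknown $\|u(s)\|_{L^p}$ itself as one H\"older factor: take $\alpha=\gamma=n/p$ and $\zeta=\delta$ in \eqref{Gest}, with $0<\delta<1$ chosen so that $n/p+\delta<n$ (possible for every $1<p<n$ by taking $\delta$ small), so that the source exponent $m$ satisfies $\tfrac1m=\tfrac1p+\tfrac{\delta}{n}\ge\tfrac1p$ and the smoothing estimate applies for the whole range of $p$. The price is that the resulting inequality is self-referential, $\|Gu(t)\|_{L^p}\lesssim\int_0^t(t-s)^{-(1+\delta)/2}e^{-(t-s)\beta_4}\|u(s)\|_{L^{n/\delta}}\|u(s)\|_{L^p}\,ds$, and one closes it with Gr\"onwall's inequality after multiplying by $e^{t\beta^*}$ and evaluating the Beta integral; continuous dependence is proved by the same Gr\"onwall scheme applied to $u-v$ in $L^p$, and $\nabla u$ is handled via \eqref{DGest} with $\gamma=\zeta=n/p$, $\alpha=\delta$. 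Everything else in your proposal (the free-part estimates, the exponential bookkeeping with $\beta^*,\beta^{**}$ taken below the relevant semigroup decay rates, and the observation that globality requires smallness only of $\|a\|_{L^n}$) matches the paper.
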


\begin{remark}
Though $\|a\|_{L^n(\mathbb{H}^n)}$ is required to be small for global existence and uniqueness in Theorem \ref{Lpthm}, there is no restriction on the size of $\|a\|_{L^p(\mathbb{H}^n)}$. This is the same as the situation for $\R^n$.

However, in contrast to the situation for $\R^n$ and results \eqref{mainLpspace2} and \eqref{mainLpDspace2} from Kato's Theorem \ref{84thm}, we have additional exponential time decay in \eqref{lpspace} and \eqref{lpDspace}.
\end{remark}

\begin{thm} \label{LrLp}
In the situation described by Theorem \ref{Lpthm}, the solution $u$ also satisfies
\be
u \in L^{r}\big((0,T), L^p(\mathbb{H}^n) \cap L^n(\H^n)\big) \quad\quad \text{for } \quad 1 \leq r < \infty.
\ee
Moreover, if $\|a\|_{L^n(\mathbb{H}^n)} \leq \lambda$, where $\lambda$ is the constant from Theorem \ref{EUthm}, then this estimate holds for $T= +\infty$.
\end{thm}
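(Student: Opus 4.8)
The plan is to read this off directly from the exponential-in-time decay of the $L^p(\mathbb{H}^n)\cap L^n(\mathbb{H}^n)$ norm of $u$ furnished by Theorem \ref{Lpthm}, exactly as in the proofs of Theorems \ref{newLrLq} and \ref{LrLn}. Concretely, \eqref{lpspace} says that $e^{t\beta^*}u\in BC\big([0,T),L^p(\mathbb{H}^n)\cap L^n(\mathbb{H}^n)\big)$; since $t\mapsto e^{t\beta^*}$ is continuous and bounded below by a positive constant on every bounded subinterval of $[0,T)$, it follows that $u$ itself is a bounded continuous, hence strongly measurable, $L^p(\mathbb{H}^n)\cap L^n(\mathbb{H}^n)$-valued function on $[0,T)$, and moreover that
\[
\|u(t)\|_{L^p(\mathbb{H}^n)}+\|u(t)\|_{L^n(\mathbb{H}^n)}\ \leq\ M\,e^{-t\beta^*},\qquad 0\leq t<T,
\]
for some constant $M=M(a)$, where we use as usual that the norm on $L^p(\mathbb{H}^n)\cap L^n(\mathbb{H}^n)$ is comparable to $\|\cdot\|_{L^p(\mathbb{H}^n)}+\|\cdot\|_{L^n(\mathbb{H}^n)}$.

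Given this, both assertions of the theorem will follow at once. When $T<\infty$, for any $1\leq r<\infty$ we have, using only boundedness over a finite interval,
\[
\int_0^T\big(\|u(t)\|_{L^p(\mathbb{H}^n)}+\|u(t)\|_{L^n(\mathbb{H}^n)}\big)^r\,dt\ \leq\ M^r\int_0^T e^{-rt\beta^*}\,dt\ \leq\ M^rT\ <\ \infty,
\]
so $u\in L^r\big((0,T),L^p(\mathbb{H}^n)\cap L^n(\mathbb{H}^n)\big)$. When $\|a\|_{L^n(\mathbb{H}^n)}\leq\lambda$ so that $T$ may be taken to be $+\infty$, the exponential factor becomes essential and gives
\[
\int_0^\infty\big(\|u(t)\|_{L^p(\mathbb{H}^n)}+\|u(t)\|_{L^n(\mathbb{H}^n)}\big)^r\,dt\ \leq\ M^r\int_0^\infty e^{-rt\beta^*}\,dt\ =\ \frac{M^r}{r\beta^*}\ <\ \infty,
\]
since $\beta^*>0$, which yields $u\in L^r\big((0,\infty),L^p(\mathbb{H}^n)\cap L^n(\mathbb{H}^n)\big)$ for every $1\leq r<\infty$, as claimed.

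There is thus no real obstacle in the proof itself: the content lies entirely in Theorem \ref{Lpthm}, and the only point worth stressing is that the global statement rests on the exponential decay $e^{-t\beta^*}$ available on $\mathbb{H}^n$. On $\R^n$, the analog of Theorem \ref{Lpthm}, namely \eqref{mainLpspace2}, gives only boundedness of $\|u(t)\|_{L^p\cap L^n}$ on $[0,\infty)$ for small data, which is not $r$-integrable over the half-line, so the corresponding global $L^r\big((0,\infty),L^p\cap L^n\big)$ membership is not obtained by Kato's estimates; see Section \ref{cmpcnt} for further discussion of this point.
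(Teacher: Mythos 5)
Your proof is correct and follows essentially the same route as the paper: the paper simply notes that the argument is identical to that of Theorem \ref{LrLn}, namely integrating the exponential decay $\|u(t)\|_{L^p\cap L^n}\leq Me^{-t\beta^*}$ furnished by \eqref{lpspace} over $(0,T)$, with the exponential factor doing the work when $T=+\infty$.
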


\begin{remark} As is the case for Theorems \ref{newLrLq} and \ref{LrLn}, an analog of Theorem \ref{LrLp} is not given by Kato for $\R^n$ in \cite{KatoMild}, and only seems possible locally on $\R^n$. For more information on this, see Section \eqref{cmpcnt}.

\end{remark}
 
 \begin{thm}\label{tmdcy2}
For the case in Theorem \ref{Lpthm} when the solution $u$ and its derivative $\nabla u$ are global, the following decay estimates on $u$ hold for any $1<p \leq q < \infty$:
\begin{equation} \label{td3}
t^{\big(\frac{n}{2p}-\frac{n}{2q}\big)}e^{t\tilde{\beta} } u \in BC([1,\infty), L^q(\mathbb{H}^n)) \quad\quad \text{if} \quad \frac{n}{2p}-\frac{n}{2q} < 1, 
\end{equation}
\begin{equation} \label{td4}
t^{\big(\frac{n}{2p'}-\frac{n}{2q}\big)}e^{t\tilde{\tilde{\beta}} } u \in BC([1,\infty), L^q(\mathbb{H}^n)) \quad\quad \text{if} \quad \frac{n}{2p}-\frac{n}{2q} \geq 1,
\end{equation}
where $p'$ is chosen such that $p < p' < n$, $p' \leq q$, and $\dfrac{n}{2p'}-\dfrac{n}{2q} < 1$, but such that $\dfrac{n}{2p'}-\dfrac{n}{2q}$ is arbitrarily close to 1. The value of $\tilde{\beta} >0$ appearing in the exponential term in \eqref{td3} depends only on $n$, $p$, $q$, and fixed constants $ \delta, \delta', \delta^* \in (0,1)$ satisfying
\begin{equation} \label{3deltas}
\begin{split}
\frac{n}{p} - \frac{n}{q} +\delta &< 2, \\
1-\delta ' &< \delta, \\
\frac{n}{p}+\delta^* &< n,
\end{split}
\end{equation}
and the value of the constant $\tilde{\tilde{\beta}}$ appearing in the exponential term in \eqref{td4} depends only on $n$, $p'$, $q$, and the same choices of $ \delta, \delta', \delta^* \in (0,1)$ as in \eqref{3deltas}, only with $p'$ replacing $p$.

Furthermore, we have the following decay estimates on $\nabla u$ for any $1<p \leq q < \infty$:
\begin{equation} \label{td5}
t^{\big(\frac{n}{2p}-\frac{n}{2q}+1\big)}e^{t\tilde{\beta} } \nabla u \in BC([1,\infty), L^q(\mathbb{H}^n)) \quad\quad \text{if} \quad \frac{n}{2p}-\frac{n}{2q} +\frac{1}{2}< 1, 
\end{equation}
\begin{equation} \label{td6}
t^{\big(\frac{n}{2p'}-\frac{n}{2q}+1\big)}e^{t\tilde{\tilde{\beta}}} \nabla u \in BC([1,\infty), L^q(\mathbb{H}^n)) \quad\quad \text{if} \quad \frac{n}{2p}-\frac{n}{2q}+\frac{1}{2} \geq 1,
\end{equation}
where $p'$ is chosen such that $p < p' < n$, $p' \leq q$, and $\dfrac{n}{2p'}-\dfrac{n}{2q} +\dfrac{1}{2} < 1$, but such that $\dfrac{n}{2p'}-\dfrac{n}{2q}+\dfrac{1}{2}$ is arbitrarily close to 1. The value of $\tilde{\beta} >0$ appearing in the exponential term in \eqref{td5} depends only on $n$, $p$, $q$, and fixed constants $ \delta, \delta', \delta^*\in(0,1)$ satisfying
\begin{equation} \label{3deltas1}
\begin{split}
\frac{n}{p} - \frac{n}{q} +1+\delta &< 2 \\
1-\delta ' &< \delta \\
\frac{n}{p}+\delta^* &< n,
\end{split}
\end{equation}
and the value of the constant $\tilde{\tilde{\beta}}$ appearing in the exponential term in \eqref{td6} depends only on $n$, $p'$, $q$, and the same choices of $ \delta, \delta', \delta^* \in (0,1)$ as in \eqref{3deltas1}, only with $p'$ replacing $p$.
\end{thm}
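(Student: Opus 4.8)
The plan is to derive Theorem~\ref{tmdcy2} by re-running on $[1,\infty)$ the Duhamel estimates underlying Theorems~\ref{EUthm} and~\ref{Lpthm}, now tracking a polynomial-in-$t$ prefactor alongside the exponential decay. The inputs are Pierfelice's $L^p$--$L^q$ smoothing estimates for $e^{tL}$ and $\nabla e^{tL}$, whose hallmark on $\mathbb{H}^n$ is a factor $e^{-\beta t}$, and the bounds already proved: $\|u(t)\|_{L^p(\mathbb{H}^n)\cap L^n(\mathbb{H}^n)}\lesssim e^{-\beta^* t}$ and $\|\nabla u(t)\|_{L^p(\mathbb{H}^n)\cap L^n(\mathbb{H}^n)}\lesssim t^{-1/2}e^{-\beta^{**}t}$ on $[0,\infty)$ from Theorem~\ref{Lpthm}, together with the matching $L^q(\mathbb{H}^n)$ bounds for $q\geq n$ from Theorem~\ref{EUthm}. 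Unlike on $\R^n$, these already give exponential decay in every intermediate $L^q(\mathbb{H}^n)$ norm by interpolation, so the exponential decay is not something that must be extracted from the integral equation; what remains is to reproduce Kato's sharp polynomial exponents with explicit control of the constants, while $BC$-continuity is inherited from the continuity statements in Theorems~\ref{EUthm} and~\ref{Lpthm}.

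First I would treat the linear part $e^{tL}a$. Since $c_n(t)$ is constant for $t\geq1$, the smoothing estimate gives $\|e^{tL}a\|_{L^q(\mathbb{H}^n)}\lesssim e^{-\beta t}\|a\|_{L^p(\mathbb{H}^n)}$, with an extra $t^{-1/2}$ for $\nabla e^{tL}a$. Since $e^{-\beta t}$ decays faster than any inverse power of $t$ grows, choosing the final rate $\tilde\beta$ strictly smaller than $\beta$ absorbs the prefactors $t^{\frac{n}{2p}-\frac{n}{2q}}$, $t^{\frac{n}{2p}-\frac{n}{2q}+1}$ into the exponential; this is precisely why the constants in \eqref{td3}--\eqref{td6} may be smaller than those in the smoothing estimates, and the available slack is quantified by $\delta,\delta',\delta^*$ in \eqref{3deltas}, \eqref{3deltas1}. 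When $\frac{n}{2p}-\frac{n}{2q}\geq1$ (resp.\ $\frac{n}{2p}-\frac{n}{2q}+\frac12\geq1$) there is no $\delta\in(0,1)$ with $\frac np-\frac nq+\delta<2$ (resp.\ $\frac np-\frac nq+1+\delta<2$), so the H\"older/smoothing bookkeeping for the nonlinear term below does not close; one then replaces $p$ by some $p<p'<n$ with $p'\leq q$, using $a\in L^p(\mathbb{H}^n)\cap L^n(\mathbb{H}^n)\subset L^{p'}(\mathbb{H}^n)$, chosen so that $\frac{n}{2p'}-\frac{n}{2q}$ is $<1$ but arbitrarily close to $1$. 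This is the origin of the dichotomy between \eqref{td3}/\eqref{td5} and \eqref{td4}/\eqref{td6}.

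For the nonlinear term $Gu(t)=-\int_0^t e^{(t-s)L}\mathbb{P}(\text{div}(u^{\#}\otimes u^{\#}))^{\flat}(s)\,ds$ and $t\geq2$, split $\int_0^t=\int_0^1+\int_1^{t/2}+\int_{t/2}^t$ (for $1\leq t\leq2$ the bound is immediate from boundedness and continuity on compact time intervals). On $[0,1]$ one has $t-s\geq t/2$, so the smoothing-with-divergence estimate contributes a factor $\lesssim t^{-\alpha}e^{-\beta t/2}$ times $\|u^{\#}(s)\otimes u^{\#}(s)\|$ in a suitable $L^r(\mathbb{H}^n)$, which is bounded on $[0,1]$ by H\"older and Theorem~\ref{Lpthm}; here one invokes the boundedness and commutation properties of $\mathbb{P}$ on $\mathbb{H}^n$ established earlier via spectral theory. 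On $[1,t]$ one reduces $\|u^{\#}(s)\otimes u^{\#}(s)\|_{L^r(\mathbb{H}^n)}$ to a product of two $L^{q_i}(\mathbb{H}^n)$ norms of $u(s)$ by H\"older, inserts the decay of $u$ known at the current stage of the bootstrap, and is left with a convolution $\int (t-s)^{-a}e^{-\beta(t-s)}s^{-b}e^{-b's}\,ds$ with $a<1$; splitting this at $s=t/2$ shows it is $\lesssim t^{-\min(a,b)}e^{-ct}$ with $c>0$, so the polynomial exponent improves by a fixed increment at each iteration until it saturates at the target value ($\frac{n}{2p}-\frac{n}{2q}$, or a number just below $1$ when that exceeds $1$), while the exponential rate survives, degraded only to a smaller positive constant. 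This closes the bootstrap for $u$ and gives \eqref{td3}, \eqref{td4}; applying instead the gradient smoothing estimate, with its extra $(t-s)^{-1/2}$, yields \eqref{td5}, \eqref{td6}, the half power being exactly what shifts the admissibility threshold from $\frac{n}{2p}-\frac{n}{2q}<1$ to $\frac{n}{2p}-\frac{n}{2q}+\frac12<1$.

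The main obstacle, as in Kato's original $(K4)$, is the simultaneous bookkeeping of the polynomial exponent, the exponential rate, and the H\"older exponents as they pass through the $L^p$--$L^q$ smoothing estimates: one must verify that the constraints \eqref{3deltas}, \eqref{3deltas1} can actually be met (switching to $p'$ when they cannot), and that after the finitely many degradations incurred in the linear term and in each bootstrap step the surviving exponential constants $\tilde\beta$, $\tilde{\tilde\beta}$ remain positive. Relative to $\R^n$, the exponential factors make the convolution estimates qualitatively easier, since the slow polynomial tails that force the delicate $s=t/2$ analysis in the Euclidean argument are here dominated by $e^{-\beta(t-s)}$, and no borderline non-integrable tail as $s\to\infty$ ever arises.
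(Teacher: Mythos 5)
Your overall strategy --- re-run Duhamel on $[1,\infty)$, use the exponential factors in Pierfelice's estimates to absorb polynomial weights, and switch to $p'$ when $\frac{n}{2p}-\frac{n}{2q}\geq 1$ --- points in the right direction, and your explanation of the $p\mapsto p'$ dichotomy matches the paper's. But there is a concrete gap in your treatment of the nonlinear term, and the machinery you import from Kato's Euclidean argument is both unnecessary here and not actually verified. You estimate $Gu$ through the divergence form, bounding $\|u^{\#}\otimes u^{\#}\|_{L^r}$ by a product of two $L^{q_i}$ norms of $u$. With that pairing the smoothing-with-divergence estimate produces a singularity $(t-s)^{-a}$ with $a=\frac{n}{2q_1}+\frac{n}{2q_2}-\frac{n}{2q}+\frac12$, and if you take the natural exponents that would reproduce the target rate $t^{-(\frac{n}{2p}-\frac{n}{2q})}$ (one factor in $L^p$, the other in some $L^{n/\delta}$ with $\delta$ small), then $a=\frac{n}{2p}+\frac{\delta}{2}-\frac{n}{2q}+\frac12\geq 1$ as soon as $\frac{n}{2p}-\frac{n}{2q}\geq\frac12$, so the $\int_{t/2}^{t}$ piece diverges precisely in part of the range $\frac{n}{2p}-\frac{n}{2q}<1$ that \eqref{td3} must cover. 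You assert "$a<1$" without checking it, and the "bootstrap in which the polynomial exponent improves by a fixed increment until it saturates" is never instantiated; with exponential decay already in hand the estimate $\lesssim t^{-\min(a,b)}e^{-ct}$ makes the polynomial exponent irrelevant on $[1,\infty)$, so there is nothing for a bootstrap to improve, and it cannot substitute for the missing integrability check. (Your observation that $e^{-ct}$ swallows any power of $t$ on $[1,\infty)$ is correct and could by itself be turned into a short proof of the theorem on $[1,\infty)$ via interpolation of Theorems \ref{EUthm} and \ref{Lpthm}, but that is not the argument you wrote, and it would not recover the statement near $t=0$ that the paper's remark claims.)

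The paper closes the estimate differently, and in one pass with no splitting at $t/2$ and no iteration. It first derives a variant of Lemma \ref{gtermest} for the form $\nabla_{u^{\#}}u$ using the plain dispersive estimate \eqref{dispest} rather than the divergence-smoothing estimate: this yields \eqref{GGest}, whose singularity is $(t-s)^{-\frac{\alpha+\zeta-\gamma}{2}}$ (no extra $+\frac12$) at the price of replacing one factor $\|u\|$ by $\|\nabla u\|$. Taking $\alpha=n/p$, $\zeta=\delta$, $\gamma=n/q$ and inserting the already-known global bounds $\|u(s)\|_{L^p}\lesssim e^{-s\beta^*}$ (Theorem \ref{Lpthm}) and $\|\nabla u(s)\|_{L^{n/\delta}}\lesssim s^{-1+\delta/2}e^{-s\beta''}$ (Theorem \ref{EUthm}), a single Beta-function computation gives $\|Gu(t)\|_{L^q}\lesssim t^{-(\frac{n}{2p}-\frac{n}{2q})}e^{-t\tilde{\beta}}$, the convergence condition being exactly $\frac{n}{p}-\frac{n}{q}+\delta<2$, i.e.\ the first line of \eqref{3deltas}; the linear part gets its polynomial rate directly from \eqref{dispest} rather than by trading exponential decay for it. The moral, which the paper states explicitly in Section \ref{cmpcnt}, is that for this theorem one does need the $u\cdot\nabla u$ form of the nonlinearity (so that the $\frac12$ is moved from the $(t-s)$ singularity onto the $s^{-1/2}$ decay of $\nabla u$), and that all decay rates required as input are already supplied by Theorems \ref{EUthm} and \ref{Lpthm}, so no Kato-style iteration is needed. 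To repair your argument, replace the divergence-form pairing by \eqref{GGest} and drop the bootstrap.
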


\begin{remark}
This is the analog on $\mathbb{H}^n$ of Kato's $(K4)$ from Theorem \ref{84thm}, though here, we have additional exponential time decay present in results \eqref{td3}, \eqref{td4}, \eqref{td5}, and \eqref{td6}. 
\end{remark}

\begin{remark}
Theorem \ref{tmdcy2} is written in terms of the time interval $[1, \infty)$, as this was how the result was presented by Kato in \cite{KatoMild}. However, examination of the proof shows the results in Theorem \ref{tmdcy2} to be true for $[0, \infty)$ as well, and this is the case for both $\R^n$ and $\H^n$.
\end{remark}

\begin{thm}\label{Lptimedecay} 
For the case in Theorem \ref{Lpthm} when the $L^p$ solution $u$ and its derivative $\nabla u$ are global, 
\begin{equation} \label{td2}
\lim_{t \to \infty}  \|u(t)\|_{L^p(\mathbb{H}^n)} = 0.
\end{equation}
\end{thm}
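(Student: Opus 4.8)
The plan is to read off \eqref{td2} directly from the exponentially weighted bound \eqref{lpspace} of Theorem \ref{Lpthm}, using crucially that, in contrast to the Euclidean estimates, the bound on $\mathbb{H}^n$ already carries a genuine exponential decay factor. First I would use the hypothesis that $u$ (and $\nabla u$) is global to set $T = +\infty$ in Theorem \ref{Lpthm}, so that \eqref{lpspace} reads
\[
e^{t\beta^*} u \in BC\big([0,\infty), L^p(\mathbb{H}^n) \cap L^n(\mathbb{H}^n)\big), \qquad \beta^* > 0.
\]
By the definition of $BC$ this forces $M := \sup_{t \geq 0} e^{t\beta^*}\|u(t)\|_{L^p(\mathbb{H}^n)} < \infty$, hence $\|u(t)\|_{L^p(\mathbb{H}^n)} \leq M e^{-t\beta^*}$ for all $t \geq 0$. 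Since $\beta^* > 0$, sending $t \to \infty$ gives \eqref{td2}; this in fact yields the sharper conclusion that $\|u(t)\|_{L^p(\mathbb{H}^n)} \to 0$ at the exponential rate $e^{-t\beta^*}$, in the same spirit as the sharpened form of \eqref{Lndecay} recorded after Theorem \ref{tmdcy1}. (Alternatively, one could invoke \eqref{td3} of Theorem \ref{tmdcy2} with $q = p$, which likewise gives $e^{t\tilde{\beta}} u \in BC([1,\infty), L^p(\mathbb{H}^n))$; either route works.)

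Consequently there is essentially no obstacle at this final step: the entire difficulty has been front-loaded into Theorem \ref{Lpthm}, specifically into showing that the weight $e^{t\beta^*}$ with $\beta^* > 0$ propagates uniformly over the whole half-line $[0,\infty)$ once $\|a\|_{L^n(\mathbb{H}^n)} \leq \lambda$, rather than only over bounded time intervals. That is the step resting on Pierfelice's dispersive and smoothing estimates for $e^{tL}$, whose exponential-in-time gain has no counterpart for the heat semigroup on $\R^n$. By way of comparison, the Euclidean analog $(K4')$ of Theorem \ref{84thm} does not follow from the corresponding a priori bounds \eqref{mainLpspace2} and \eqref{mainLpspace3} — the former is decay-free in $L^p$ and the latter has exponent equal to $0$ when $q = p$ — and genuinely requires an extra argument; here that is bypassed, and the only thing to verify is the global form of \eqref{lpspace} together with the positivity of $\beta^*$.
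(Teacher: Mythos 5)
Your proposal is correct and is essentially identical to the paper's own proof: both read off the uniform bound $\|u(t)\|_{L^p(\mathbb{H}^n)} \leq C e^{-t\beta^*}$ from the global form of \eqref{lpspace} in Theorem \ref{Lpthm} and let $t \to \infty$ using $\beta^* > 0$. The surrounding commentary (exponential rate, contrast with the Euclidean case $(K4')$) matches the remarks the paper already makes.
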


\begin{remark}
Theorem \ref{Lptimedecay} implies, as in the case of $\R^3$, that 
\begin{equation}
\lim_{ t \to \infty} \|u(t)\|_{L^2(\mathbb{H}^3)} =0.
\end{equation}

As well, in the course of proving Theorem \ref{Lptimedecay}, we will explicitly show
\begin{equation}
\|u(t)\|_{L^p(\mathbb{H}^n)} = \mathcal{O}(e^{-\beta t}),
\end{equation}
for some $\beta >0$ depending only on $n$, $p$, and a fixed $0 < \delta < 1$.

In comparison, Kato shows in \cite{KatoMild} that on $\R^n$,
\begin{equation}
\|u(t) - e^{tL}a\|_{L^p(\mathbb{R}^n)} = \mathcal{O}(t^{-\omega/2}),
\end{equation}
where $\omega>0$ is any real number satisfying
 \begin{equation} \label{deltacond}
 \omega < \text{min}\bigg\{ 1, n-\frac{n}{p}, \frac{n}{p} - 1\bigg\}.
 \end{equation}
This implies that the decay rate of $\|u(t)\|_{L^p(\mathbb{R}^n)}$ is at least as fast as the slower of $\|e^{tL}a\|_{L^p(\mathbb{R}^n)}$ and $t^{-\omega/2}$.

So in the setting of $\mathbb{H}^n$, not only does the limit \eqref{td2} converge to 0, it does so at a much faster rate. As well, the additional exponential time decay present in results \eqref{lpspace} and \eqref{lpDspace} from Theorem \ref{Lpthm} allows for significant simplifications in the proof of Theorem \ref{Lptimedecay}, which we will discuss in more detail in the next section.
\end{remark}

\begin{remark}
It is enough to prove the results of Theorems \ref{EUthm} and \ref{Lpthm} for $L^{\infty}([0,T))$ in time. Indeed, the main ingredient in extending from $L^{\infty}([0,T))$ to $BC([0,T))$ in time in the Euclidean setting $\R^n$ is the strong continuity of the heat semigroup $e^{t\Delta_{\R^n}}$ on $L^p(\R^n)$ for $1 \leq p <\infty$ (see, for example, \cite{kato_fujita}, \cite{giga_semilinear}, \cite{KatoMild}, and Chapter 5 of \cite{tsai-lec}). Thus if it is known that the corresponding semigroup $e^{tL}$ in the setting of $\mathbb{H}^n$ is strongly continuous, the extension from $L^{\infty}([0,T))$ to $BC([0,T))$ follows just as it does in the Euclidean case. Therefore, in the Appendix, we prove the semigroup $e^{tL}$ is not only strongly continuous, but contractive on  $L^p(\mathbb{H}^n)$ for $1 \leq p <\infty$.
\end{remark}


\subsection{Comparing and contrasting the cases for $\R^n$ and $\mathbb{H}^n$} \label{cmpcnt}
As we have remarked, in \cite{Pierfelice}, Pierfelice proved dispersive and smoothing estimates for the semigroup $e^{tL}$ that improve upon well-known estimates for the heat semigroup on $\R^n$, and then expanded Kato's results \eqref{mainspace2} and $(K2)$ from Theorem \ref{84thm} to the setting of a class of complete Riemannian manifolds containing $\mathbb{H}^n$, while achieving better global in time behavior in certain instances.

Thus it is our hope that this current work, where we extend the rest of Kato's Theorem \ref{84thm} to the setting of $\mathbb{H}^n$, while also showing improved global in time decay for certain $L^p$ norms, can be taken together with Pierfelice's \cite{Pierfelice} to provide for the hyperbolic space as complete a picture as possible of Kato's $L^p$-based mild solution theory, as presented in \cite{KatoMild}.

As a first point of comparison, we note that for any $\beta >0$ and $t\geq 0$, $e^{-\beta t} \leq 1$. Using this fact, we see that Theorems \ref{EUthm} - \ref{Lptimedecay} exactly recover on $\mathbb{H}^n$ Kato's Theorem \ref{84thm} from \cite{KatoMild}, except for the case $q=\infty$ in results \eqref{mainspace2} and \eqref{mainDspace2}. However, not only do the additional exponential in time decay terms appearing in Theorems \ref{EUthm} and \ref{Lpthm} offer an improvement over the analogous results for $\R^n$, they also allow for remarkable simplifications in proving the global in time decay results from Theorems \ref{tmdcy1} and \ref{Lptimedecay}, as compared to their Euclidean analogs \eqref{intnormdcy} and \eqref{LpRndcy}.

To be more explicit, one of Kato's primary motivations in proving Theorem \ref{84thm} from \cite{KatoMild} was to investigate the global behavior of solutions as $t \to \infty$. Indeed, Kato even explicitly states this in Remark 1.1 from \cite{KatoMild}: ``As is well known, the solution $u$ is smooth for $t>0$. Therefore, the real interest in these theorems are in the behavior of $u$ as $t \to 0$ and, in case $u$ is global, as $t \to \infty$." 

But in examining Kato's result \eqref{mainspace2} from Theorem \ref{84thm}, as well as result \eqref{mainLpspace2}, we see that there is no information given for the decay profile of $\|u(t)\|_{L^n(\mathbb{R}^n)}$ or $\|u(t)\|_{L^p(\mathbb{R}^n)}$, respectively, and so Kato has to work quite hard to prove the decay results \eqref{intnormdcy} and \eqref{LpRndcy}. Indeed, though results \eqref{gigares}, \eqref{mainLpspace3}, and  \eqref{mainLpDspace3} from Kato's Theorem \ref{84thm} are interesting results in their own right, they are also essential tools in Kato's proofs of \eqref{intnormdcy} and \eqref{LpRndcy}. Yet, even with these results in hand, which themselves take no small amount of effort, the proofs of \eqref{intnormdcy} and \eqref{LpRndcy} from Kato's Theorem \ref{84thm} are still considerably involved.

As well, for the case of small initial data and global existence, Kato only shows the following:
\begin{equation}
\lim_{t \to \infty} \|u(t)\|_{L^2(\mathbb{R}^2)} = 0,
\end{equation}
and
\begin{equation}
\lim_{t \to \infty} \|u(t)\|_{L^p(\mathbb{R}^n)} = 0,
\end{equation}
where in the second limit, $1 < p < n$. The question as to whether $\|u(t)\|_{L^n(\mathbb{R}^n)}\to 0$ as $t \to \infty$ for $n\geq 3$ was left unanswered, and was not shown for $n=3$ until 2003 by Gallagher, Iftimie, and Planchon \cite{gall}. Moreover, in both Kato's approach in \cite{KatoMild} and in the approach by Gallagher, Iftimie, and Planchon \cite{gall}, elements from the Leray-Hopf weak solution theory such as local energy estimates are required, and the proofs are quite different for dimensions two and three.

In contrast, due to the exponential terms appearing in results \eqref{mainspace} and \eqref{lpspace} from Theorems \ref{EUthm} and \ref{Lpthm}, respectively, in the setting of $\mathbb{H}^n$, we do have information concerning the time decay of the norms $\|u(t)\|_{L^n(\mathbb{H}^n)}$ and $\|u(t)\|_{L^p(\mathbb{H}^n)}$. Thus the proofs for the decay results in Theorems \ref{tmdcy1} and \ref{Lptimedecay} are straightforward and immediate. We are also able to affirmatively show that 
\begin{equation}
\lim_{t \to \infty} \|u(t)\|_{L^n(\mathbb{H}^n)} = 0,
\end{equation}
for $n\geq 2$, and do so using a unified proof that works for all dimensions. Also, as previously mentioned, the convergence of this limit is of order $e^{-\beta t}$, where $\beta$ comes from \eqref{mainspace2}, so that we have an explicit rate for the decay on $\mathbb{H}^n$. 

The exponential time decay appearing in results \eqref{mainspace} and \eqref{lpspace} from Theorems \ref{EUthm} and \ref{Lpthm}, respectively, also offers a wider range of time integrability for the solution than is available in the Euclidean setting, and this is the focus of Theorems \ref{newLrLq}, \ref{LrLn}, and \ref{LrLp}. As discussed in Remark \ref{lrlqrem1}, for $n=2$, both Kato's result \eqref{gigares} from Theorem \ref{84thm}, and its analog on $\H^2$ given by Theorem \eqref{PSL}, allow us to place the spatial $L^q$ norm of a mild solution $u$ in $L^r(0,T)$ for $2 < r < \infty$ by choosing some $2 < q < \infty$ as defined by \eqref{gigares}. Also, for $n=3$, Kato's Theorem \ref{84thm} and its analog on $\H^3$ given by Theorem \eqref{PSL} give $\|u(t)\|_{L^q} \in L^r(0,T)$ for $3 < r < \infty$ by choosing an appropriate $3 <q < 9$. 

In contrast, Theorem \ref{newLrLq} allows us to put $\|u(t)\|_{L^q(\H^n)} \in L^r(0,T)$ for $1 \leq r < \infty$ by choosing an appropriate $n < q$, and the range of Lebesgue exponents for the time-integrability does not change with dimension. Additionally, Theorem \ref{LrLn} shows that we can achieve $L^r$-integrability in time for $1 \leq r < \infty$ for the $L^n(\H^n)$ norm of the solution, and we also get the range $1 \leq r < \infty$ for the $L^r(0,T)$ norm of $\|u(t)\|_{L^p(\H^n)}$ in the case where the initial data satisfies $a \in L^n(\H^n) \cap L^p(\H^n)$ for $1 < p < n$.

As it turns out, analogs of Theorems \ref{newLrLq}, \ref{LrLn}, and \ref{LrLp} can be shown locally in the Euclidean setting, but as far as we aware, not globally. We first observe for $n < q$ that by Kato's Theorem \ref{84thm}, in the case of a global solutions, there exists a constant $C$ such that
\be
\|u(t)\|_{L^q(\R^n)} \leq Ct^{-\big(\frac{1}{2} -\frac{n}{2q} \big)}, \quad 0 \leq t < \infty.
\ee
So in estimating the $L^r(0,\infty)$ norm of $\|u(t)\|_{L^q(\R^n)}$, we would need to uniformly control the $L^r(0,\infty)$ norm of $t^{-(\frac{1}{2} -\frac{n}{2q})}$. Since the assumptions on $n$, $q$, and $r$ given by \eqref{newPSL'} imply
\be
r\bigg(\frac{1}{2} -\frac{n}{2q}\bigg) < 1,
\ee
we can uniformly control $\int_0^T \|u(t)\|^r_{L^q(\R^n)}\, dt$ locally, say for $0 < T < 1$. However, since the integral 
\be
\int_1^{\infty} t^{-r\big(\frac{1}{2} -\frac{n}{2q} \big)}\,dt
\ee
diverges, it is not clear how to prove an analog of Theorem \ref{newLrLq} globally in the Euclidean setting, since the power of $t$ appearing is not useful for the large time regime.

However, in the case of $\H^n$, we have by result \eqref{mainspace} of Theorem \ref{EUthm} that
\be
\|u(t)\|_{L^q(\H^n)} \leq Ct^{-\big(\frac{1}{2} -\frac{n}{2q} \big)}e^{-t\beta}, \quad 0 \leq t < \infty,
\ee
where $\beta >0$, and it is this additional exponential decay that allows us to uniformly control the $L^r(0,T)$ norm of $\|u(t)\|_{L^q(\H^n)}$, even for large times. Similarly, Kato's results \eqref{mainspace2} and \eqref{mainLpspace3} from Theorem \ref{84thm} give no information concerning a decay rate for $\|u(t)\|_{L^n(\H^n)}$ and $\|u(t)\|_{L^p(\H^n)}$, respectively. Thus the global versions of Theorems \ref{LrLn} and \ref{LrLp} seem out of reach in the Euclidean setting, at least using Kato's theory, since there is no useful time decay available to handle uniform estimates for the large time regime $t \in (1, \infty)$.

We next remark here that, close examination of the proofs of Theorems \ref{EUthm}, \ref{Lpthm}, and \ref{tmdcy2} show that the methods employed in this current work are adaptable to $\R^n$ and offer some improvements. More precisely, since any exponential term of the form $e^{-\beta t}$ is bounded for $\beta >0$ and $t\geq 0$, terms of this form can, if desired, be omitted from the subsequent estimates and proofs appearing in the paper. What remains are proofs in the spirit of those used by Kato in \cite{KatoMild} that are easily adaptable to $\R^n$, though they have been simplified wherever possible.

One such simplifcation involves how to handle the term $Gu$ defined by \eqref{inteq}. The estimates used by Kato in \cite{KatoMild} for this term differ from our estimates to be presented in Section \ref{estimatesused}, even ignoring the presence of exponential terms. Indeed, Kato's estimates involve norms of both $u$ and $\nabla u$. Thus in running his Picard iteration argument for the solution $u$, Kato necessarily requires information about norm bounds and decay rates for both $u$ and $\nabla u$, simultaneously.

In our approach, this is unnecessary,  and by exploiting the fact that
\[
\nabla_{u^{\#}} u = (\text{div}(u^{\#} \otimes u^{\#}))^{\flat} 
\]
whenever $d^* u =0$ and using a general smoothing estimate of Pierfelice stated in \cite{Pierfelice} for the divergence of tensors in $TM \otimes TM$, we are able to derive estimates on the term $Gu$ involving only norms of $u$, and thus are able to prove all of the results for $u$ in Theorems \ref{EUthm} - \ref{Lpthm} without knowing any information about $\nabla u$. 

Since the aforementioned smoothing estimate for the divergence of tensors proved by Pierfelice in \cite{Pierfelice} is easily adaptable to $\R^n$, and in fact, is even used by Kato in \cite{KatoMild} to prove \eqref{gigares} from Theorem \ref{84thm}, it follows that it is similarly possible on $\R^n$ to prove all results concerning the solution $u$ in Kato's $(K1)$ - $(K3)$ from Theorem \ref{84thm}, without requiring any information about the derivative $\nabla u$.

Moreover, because Kato's estimates for $Gu$ intertwine both $u$ and $\nabla u$, Kato has to run concurrent iteration arguments for both $u$ and $\nabla u$ in establishing \eqref{mainspace2} and \eqref{mainDspace}. Our approach is again simplified in that, once \eqref{mainspace} from Theorem \ref{EUthm} is known for finite $q>n$, both the result \eqref{mainspace} for $q=n$ and the derivative result \eqref{mainDspace} for all finite $q \geq n$ can be established by a straightforward application of Gr{\"o}nwall's inequality.

Similarly, for proving $(K3)$ in Theorem \ref{84thm}, Kato again returns to the sequence of Picard iterates and runs convergence arguments for both $u$ and $\nabla u$. In the same way as discussed above, we are able to avoid this by again using \eqref{mainspace} from Theorem \ref{EUthm} for the case $q>n$ together with Gr{\"o}nwall's inequality.

Another benefit of this simplified approach based on Gr{\"o}nwall's inequality is that, whereas the times $T$ in $(K1)$ and $T_1$ in $(K3)$ from Theorem \ref{84thm} may be different, with $T_1 < T$ possible, we are able to prove our Theorems \ref{84thm} and \ref{Lpthm} using the same time of existence $T$. Additionally, while the smallness requirements $\lambda$ in $(K2)$ and $\lambda_1$  in $(K4)$ of Theorem \ref{84thm} may satisfy $\lambda_1 < \lambda$, we are able to prove our Theorems  \ref{84thm} and \ref{Lpthm} with the same $\lambda$.

However, for Theorem \ref{PSL}, we have not yet found a way to prove \eqref{LrLq} by a simplified approach based on Gr{\"o}nwall's inequality, as discussed above. The aforementioned estimate for the term $Gu$, which are based on Pierfelice's dispersive and smooth estimates for the heat kernel $e^{tL}$, are not as useful in proving Theorem \ref{PSL}, and different ideas such as the Marcinkiewicz interpolation theorem and the Hardy-Littlewood-Sobolev lemma are required. 

This leads to at least two disadvantages: the first is that we have found no way to avoid returning to a Picard iteration-type argument to prove \eqref{LrLq} from Theorem \ref{PSL}. The second is that, at least so far, we have not been able to get an exponential decay term in \eqref{LrLq}, leaving this as the sole place where the Euclidean and hyperbolic space theories align exactly. Also, as far as we can tell, to prove Theorem \ref{tmdcy2}, it is necessary to use estimates for the term $Gu$ in the spirit of Kato that involve norms and decay rates of both $u$ and $\nabla u$. 

We also mention here some differences in the semigroup theory between the cases for $\R^n$ and $\mathbb{H}^n$. On Euclidean space $\R^n$, it is well-known that the Laplacian $\Delta_{\R^n}$ generates a strongly continuous semigroup $e^{t\Delta_{\R^n}}$ (see, for example, Chapter 9 of \cite{kato1995}). However, for the semigroup $e^{tL}$ studied in this current work, the strong continuity is not automatic, and, as far as we are aware, has not been shown elsewhere for the case $\mathbb{H}^n$. Therefore, we prove this fact in the Appendix.

Additionally, in the more general setting of $\mathbb{H}^n$, necessary facts such as the $L^p$ boundedness of the projection $\mathbb{P}$ defined by \eqref{projdef} and the commutation of $\mathbb{P}$ and the semigroup $e^{tL}$ are not obvious and without the availability of Fourier transform methods, we must instead resort to spectral theory to prove these results.


\subsection{Organization of the article} \label{orgart}
The rest of the article is structured in the following way. In section \ref{estimatesused} we establish notation, present modified versions of Pierfelice's dispersive and smooth estimates for the semigroup $e^{tL}$, state and prove functional theoretic properties concerning the operators $\mathbb{P}$ and $L$, and prove necessary estimates on the term $Gu$ in \eqref{inteq}.

With these estimates, we then prove Theorem \ref{EUthm} in Section \ref{mildsol} using the method of Picard iteration. Specifically, we use Picard iteration to show \eqref{mainspace} for the case $q > n$, and once this is established, we prove \eqref{mainspace} for the case $q=n$ and the derivative result \eqref{mainDspace} using Gr{\"o}nwall's inequality, as discussed above.

To prove Theorem \ref{PSL}, we first follow the ideas of Giga in \cite{giga_semilinear}, which involves the Marcinkiewicz interpolation theorem to prove \eqref{LrLq} for the free solution $u_0$ in \eqref{inteq}. Then, we return to the sequence of Picard iterates and use induction and the Hardy-Littlewood-Sobolev lemma to extend the result to the solution $u$.

Theorems \ref{newLrLq} and \ref{LrLn} are handled next, and this involves straightforward estimates using the exponential time decay from \eqref{mainspace} in Theorem \ref{EUthm}. Following this, in Section \ref{tmdcy1pf}, we prove Theorem \ref{tmdcy1}, which in the setting of $\mathbb{H}^n$, no longer requires Theorem \ref{PSL}, and is instead an almost immediate consequence of the exponential time decay appearing in Theorem \ref{EUthm}. Then, as previously discussed, we apply the result \eqref{mainspace} for $q > n$ together with Gr{\"o}nwall's inequality in Section \ref{Lpthmpf} to prove Theorem \ref{Lpthm}.

Next, we prove Theorem \ref{LrLp} using the exponential time decay offered by Theorem \ref{Lpthm}, and then we use Theorems \ref{EUthm} and \ref{Lpthm} to prove Theorem \ref{tmdcy2}. Finally, in Section \ref{Lptimedecaypf} we use \ref{Lpthm} to prove Theorem \ref{Lptimedecay}, which in the setting of $\mathbb{H}^n$, is immediate due to the exponential decay term present in \eqref{lpspace} from Theorem \ref{Lpthm}, and, in contrast to the case for $\R^n$, no longer involves an argument necessitating Theorem \ref{tmdcy2}.

\begin{acknowledgment}
The author would like to sincerely thank his thesis advisor Magdalena Czubak for suggesting the problem, for her patience, for her encouragement, and for the many helpful conversations throughout the course of the research.
\end{acknowledgment}

\section{{Notation and estimates used}} \label{estimatesused} 
In this section, we first establish notation and then state the aforementioned dispersive and smoothing estimates of Pierfelice from \cite{Pierfelice} for the semigroup $e^{tL}$. Following this, we must discuss and prove some functional analytic properties of the operator $\mathbb{P}$ defined by \eqref{projdef}, specifically that it is a bounded operator from $L^p$ to $L^p$ and that it commutes with the semigroup $e^{tL}$. 

Both of these facts are discussed in \cite{Pierfelice}, though in the more general setting of that work, Pierfelice must rely on the general Riesz transform boundedness results derived by Lohou{\'e} in \cite{lohoue}. For our more specific setting of $\mathbb{H}^n$, which is a rank-one symmetric space, things are simpler and we can instead rely on the boundedness results for Riesz transforms shown by Strichartz in \cite{Strichartz_laplacian}. 

With these dispersive and smoothing estimates stated and the required functional analytic properties of $\mathbb{P}$ established, we then prove estimates on the term $Gu$ in \eqref{inteq} and its derivatives, which will be essential to our proofs of Theorems \ref{EUthm} - \ref{Lptimedecay}.

To begin, we define some constants that appear in the subsequent dispersive and smoothing estimates. Here and in the rest of the paper, a constant depending on the fixed parameters $a_1, a_2, \ldots, a_k$ is denoted by $C(a_1, a_2, \ldots, a_k)$. For fixed $n, p, q >0$, define

\begin{equation} \label{const}
\begin{split}
\gamma(n,p,q) &= \frac{\delta_n}{2}\bigg[\bigg(\frac{1}{p} - \frac{1}{q}\bigg)+\frac{8}{q}\bigg(1-\frac{1}{p}\bigg) \bigg], \\
\beta_1(n, p, q) &= \frac{\gamma(n,p,q) + c_0}{2}, \\
\beta_2(n, p) & =\frac{4\delta_n}{2p}\bigg(1-\frac{1}{p}\bigg) + \frac{c_0}{2}, \\
\beta_3(n,p,q) &= \frac{1}{4}\big[\gamma(n,q,q) + \gamma(n,p,q)\big] +\frac{c_0}{2},
\end{split}
\end{equation}
where $c_0$ is a positive constant bounding the $\text{Ric}$ operator and the constant $\delta_n > 0$ depends only on the dimension $n$ (for more information on $c_0$ and $\delta_n$, see \cite{Pierfelice} and \cite{setti}). These are various constants used by Pierfelice in \cite{Pierfelice}, only here divided by 2, which simplifies subsequent computations.

As well, going forward, we will use the notation \[L^p(\mathbb{H}^n) \quad \text{(or sometimes just $L^p$)}\]  to denote $L^p$ spaces for functions, forms, and tensors alike and if the distinction is important or necessary at any point, it will be noted.

With this notation established, we can now present the following dispersive and smoothing estimates shown by Pierfelice in \cite{Pierfelice} for $u_0(t) = e^{tL}a$ defined by \eqref{inteq}, and for the specific case $M= \mathbb{H}^n$.

\begin{thm}\cite{Pierfelice}\label{dsmest1}
For all times $t >0$ and $a \in L^p(\mathbb{H}^n)$, $u_0(t) = e^{tL}a$ satisfies the following estimates,

\begin{align} 
\label{dispest1} \big\|u_0(t)\big\|_{L^q(\mathbb{H}^n)} &\leq c_n(t)^{\big(\frac{1}{p}- \frac{1}{q}\big)}e^{-2t\beta_1(n,p,q)}\|a\|_{L^p(\mathbb{H}^n)}, \quad && 1 \leq p \leq q \leq \infty; \\ 
\label{smest11} \big\|\nabla u_0(t)\big\|_{L^p(\mathbb{H}^n)} &\leq C\,\text{\emph{max}}\big(t^{-1/2}, 1\big)e^{-2t\beta_2(n,p)}\|a\|_{L^p(\mathbb{H}^n)}, \quad && 1 < p < \infty; \\
\label{smest21} \big\|\nabla u_0(t)\big\|_{L^q(\mathbb{H}^n)} &\leq c_n(t)^{\big(\frac{1}{p}- \frac{1}{q}+\frac{1}{n}\big)}e^{-2t\beta_3(n,p,q)}\|a\|_{L^p(\mathbb{H}^n)}, \quad && 1 < p \leq q < \infty;
\end{align}
where $c_n(t) = C(n)\text{max }(t^{-n/2},1)$. Moreover, for all tensors $T_0 \in L^p(T\mathbb{H}^n \otimes T^*\mathbb{H}^n)$, we have the following general smoothing estimate
\begin{align} \label{smest31}
\big\|e^{tL}\nabla^*T_0\big\|_{L^q(\mathbb{H}^n)} &\leq c_n(t)^{\big(\frac{1}{p}- \frac{1}{q}+\frac{1}{n}\big)}e^{-2t\beta_3(n,p,q)}\|T_0\|_{L^p(\mathbb{H}^n)}, \quad && 1 < p \leq q < \infty.
\end{align} 
\end{thm}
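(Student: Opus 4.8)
The plan is to obtain all four estimates of Theorem~\ref{dsmest1} directly from the corresponding estimates in Pierfelice's paper \cite{Pierfelice}, specialized to $M = \mathbb{H}^n$, and then to reconcile the exponential constants. The point is that Pierfelice proves these dispersive and smoothing bounds for the semigroup $e^{tL}$ on the whole class of manifolds under consideration — which includes $\mathbb{H}^n$ since on $\mathbb{H}^n$ the Ricci operator equals $-(n-1)\,\mathrm{Id}$ — so \eqref{dispest1}, \eqref{smest11}, \eqref{smest21} and \eqref{smest31} are not new: they are restatements of Pierfelice's results with the bookkeeping constants $\gamma$, $\beta_1$, $\beta_2$, $\beta_3$ made explicit via \eqref{const}. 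Thus the ``proof'' is really a matter of (i) quoting the right estimate from \cite{Pierfelice}, (ii) checking that $\mathbb{H}^n$ satisfies the hypotheses (positive injectivity radius, the required curvature bounds, and $\mathrm{Ric} = -(n-1)g$ so that $c_0 = n-1$ works as a bound on the Ricci operator), and (iii) verifying that the constant appearing in Pierfelice's exponential matches $2\beta_i$ after substituting the definitions of $\gamma$, $\delta_n$, $c_0$.

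First I would recall Pierfelice's dispersive estimate: she shows $\|e^{tL}a\|_{L^q} \le c_n(t)^{(1/p-1/q)} e^{-t\,\mathrm{(const)}}\|a\|_{L^p}$ for $1 \le p \le q \le \infty$, where the constant in the exponent is built from the spectral gap of the Laplacian on $\mathbb{H}^n$ together with the $-(n-1)$ shift coming from $L = \Delta_{B,1} - (n-1)$ on divergence-free $1$-forms and an interpolation loss controlled by $\delta_n$. Unwinding her notation, this constant is exactly $2\beta_1(n,p,q) = \gamma(n,p,q) + c_0$, which is how \eqref{const} is set up — the ``divided by $2$'' remark in the text after \eqref{const} is precisely the statement that $\beta_i$ is half of Pierfelice's constant. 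The three smoothing estimates \eqref{smest11}, \eqref{smest21}, \eqref{smest31} are obtained the same way: \eqref{smest11} from Pierfelice's $L^p$-smoothing bound on $\nabla e^{tL}$ with exponent constant $2\beta_2$; \eqref{smest21} from her $L^p \to L^q$ gradient-smoothing bound with constant $2\beta_3$; and \eqref{smest31} from her general smoothing estimate for $e^{tL}\nabla^*$ applied to tensors in $T\mathbb{H}^n \otimes T^*\mathbb{H}^n$, again with constant $2\beta_3$. The extra $1/n$ in the $c_n(t)$-exponent of \eqref{smest21} and \eqref{smest31} reflects the one derivative being absorbed, exactly as in the Euclidean heat-kernel smoothing estimates $\|\nabla e^{t\Delta_{\mathbb{R}^n}}\|_{L^p\to L^q} \lesssim t^{-n/2(1/p-1/q) - 1/2}$.

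I would then carry out the reconciliation step carefully for one estimate, say \eqref{dispest1}, writing out how $\gamma(n,p,q)$ arises from Pierfelice's decay rate (the $8/q(1-1/p)$ term is the interpolation contribution, the $1/p - 1/q$ term is the scaling contribution, and the overall $\delta_n/2$ is her dimensional constant), and then assert that \eqref{smest11}--\eqref{smest31} follow by the identical substitution. Because everything is a direct specialization, I would keep this short — essentially a lemma-by-lemma citation with the constant identities spelled out — rather than reproving Pierfelice's heat-kernel analysis.

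The main obstacle, such as it is, is purely notational rather than mathematical: Pierfelice's \cite{Pierfelice} states these estimates in a form adapted to her general class of manifolds, with constants named differently and sometimes left implicit, so the real work is to match her exponential rates against the clean expressions $2\beta_1, 2\beta_2, 2\beta_3$ defined in \eqref{const} and to confirm that on $\mathbb{H}^n$ the relevant quantities ($c_0 = n-1$, the value of $\delta_n$, the spectral gap $(n-1)^2/4$) enter in the claimed way. One should also double-check the endpoint behavior: \eqref{dispest1} is claimed up to $q = \infty$, which is consistent with Pierfelice's dispersive estimate including the $L^1 \to L^\infty$ endpoint, whereas \eqref{smest11}, \eqref{smest21}, \eqref{smest31} are stated only for finite exponents and $p > 1$, matching the Riesz-transform-type restrictions inherent in bounding $\nabla e^{tL}$ and $e^{tL}\nabla^*$. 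Beyond this bookkeeping there is nothing to prove here, since the analytic content is entirely Pierfelice's.
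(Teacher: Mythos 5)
Your proposal matches what the paper actually does: Theorem~\ref{dsmest1} is stated as a direct citation of Pierfelice's dispersive and smoothing estimates specialized to $M=\mathbb{H}^n$, with no independent proof given, and the only ``work'' is the bookkeeping you describe — checking that $\mathbb{H}^n$ lies in her class of manifolds (here $\mathrm{Ric}=-(n-1)g$, so $c_0=n-1$) and that her exponential rates equal $2\beta_i$ after the constants in \eqref{const} are halved, exactly as the remark following \eqref{const} indicates. This is the same approach as the paper, and the analytic content is indeed entirely Pierfelice's.
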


To prove Theorems \ref{EUthm} - \ref{Lptimedecay}, we will use a modified form of Theorem \eqref{dsmest1}. Specifically, by definition of Pierfelice's constant $c_n(t)$, estimates using Theorem \ref{dsmest1}, especially global in time estimates, will require considering separate cases where $0< t < 1$ and $t \geq 1$. To avoid the technical difficulties presented by this, we instead use the following result, where, for the time regime $t >1$, we trade a factor of $e^{-t\beta_i}$, $i=1,2,3$ for an appropriate inverse power of $t$.

\begin{thm}\label{dsmest}
For all times $t >0$ and $a \in L^p(\mathbb{H}^n)$, $u_0(t) = e^{tL}a$ satisfies the following estimates,

\begin{align} 
\label{dispest} \big\|u_0(t)\big\|_{L^q(\mathbb{H}^n)} &\leq C(n,p,q)\,t^{-\frac{n}{2}\big(\frac{1}{p}- \frac{1}{q}\big)}e^{-t\beta_1(n,p,q)}\|a\|_{L^p(\mathbb{H}^n)}, \quad && 1 \leq p \leq q \leq \infty; \\ 
\label{smest1} \big\|\nabla u_0(t)\big\|_{L^p(\mathbb{H}^n)} &\leq C(n,p)\,t^{-1/2}e^{-t\beta_2(n,p)}\|a\|_{L^p(\mathbb{H}^n)}, \quad && 1 < p < \infty; \\
\label{smest2} \big\|\nabla u_0(t)\big\|_{L^q(\mathbb{H}^n)} &\leq C(n,p,q)\,t^{-\frac{n}{2}\big(\frac{1}{p}- \frac{1}{q}+\frac{1}{n}\big)}e^{-t\beta_3(n,p,q)}\|a\|_{L^p(\mathbb{H}^n)}, \quad && 1 < p \leq q < \infty.
\end{align}
Moreover, for all tensors $T_0 \in L^p(T\mathbb{H}^n \otimes T^*\mathbb{H}^n)$, we have the following general smoothing estimate
\begin{align} \label{smest3}
\big\|e^{tL}\nabla^*T_0\big\|_{L^q(\mathbb{H}^n)} &\leq C(n,p,q)\,t^{-\frac{n}{2}\big(\frac{1}{p}- \frac{1}{q}+\frac{1}{n}\big)}e^{-t\beta_3(n,p,q)}\|T_0\|_{L^p(\mathbb{H}^n)}, \quad && 1 < p \leq q < \infty.
\end{align} 
\end{thm}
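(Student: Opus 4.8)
The plan is to deduce Theorem \ref{dsmest} from Theorem \ref{dsmest1} by trading, in the regime $t > 1$, a portion of the exponential decay for an inverse power of $t$. The starting observation is that for any $\alpha > 0$ and any $\mu > 0$ there is a constant $C(\alpha,\mu)$ with
\[
\max(t^{-\alpha},1) \leq C(\alpha,\mu)\, t^{-\alpha} e^{\mu t}, \quad t > 0,
\]
since for $0 < t \leq 1$ we have $\max(t^{-\alpha},1) = t^{-\alpha} \leq t^{-\alpha} e^{\mu t}$, while for $t \geq 1$ we have $\max(t^{-\alpha},1) = 1$ and $t^{\alpha} e^{-\mu t} \to 0$, so $t^{\alpha} e^{-\mu t}$ is bounded on $[1,\infty)$. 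Applying this with $\alpha = \frac{n}{2}(\frac1p - \frac1q)$ (resp. $\alpha = \frac12$, resp. $\alpha = \frac{n}{2}(\frac1p - \frac1q + \frac1n)$) to the factor $c_n(t)^{(\cdots)} = C(n)^{(\cdots)}\max(t^{-n/2},1)^{(\cdots)}$ appearing in each of \eqref{dispest1}, \eqref{smest11}, \eqref{smest21}, \eqref{smest31}, I absorb the resulting $e^{\mu t}$ into the exponential $e^{-2t\beta_i}$, which leaves $e^{-(2\beta_i - \mu)t}$. Choosing $\mu$ small enough that $2\beta_i - \mu \geq \beta_i$ — concretely, $\mu \le \beta_i$ — and recalling that each $\beta_i$ defined in \eqref{const} is strictly positive (because $c_0 > 0$ bounds the Ricci operator and $\gamma(n,p,q) \ge 0$), we get exactly the stated bounds $t^{-\alpha} e^{-t\beta_i(\cdots)}\|a\|$ (resp. $\|T_0\|$).

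In more detail, the steps are: (i) state and prove the elementary inequality above; (ii) for \eqref{dispest}, write $c_n(t)^{(\frac1p-\frac1q)} = C(n)^{(\frac1p-\frac1q)}\max(t^{-n/2},1)^{(\frac1p-\frac1q)} = C(n)^{(\frac1p-\frac1q)}\max(t^{-\frac{n}{2}(\frac1p-\frac1q)},1)$, apply (i) with $\mu = \beta_1(n,p,q)$ and $\alpha = \frac{n}{2}(\frac1p-\frac1q)$, and absorb constants depending only on $n,p,q$ into $C(n,p,q)$, using $2\beta_1 - \beta_1 = \beta_1$; (iii) repeat verbatim for \eqref{smest1} using $t^{-1/2}$ in place of $\max(t^{-1/2},1)$ via (i) with $\mu = \beta_2(n,p)$; (iv) repeat for \eqref{smest2} and for \eqref{smest3} using $\mu = \beta_3(n,p,q)$ and $\alpha = \frac{n}{2}(\frac1p - \frac1q + \frac1n)$. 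The endpoint cases in \eqref{dispest} ($p=q$ or $q=\infty$) are handled the same way, noting that when $p=q$ the exponent $\alpha$ is zero and the estimate is just the contraction bound with exponential decay, which survives trivially.

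I do not anticipate a serious obstacle here; the result is essentially a bookkeeping lemma that repackages Pierfelice's estimates into a form uniform across $0 < t < 1$ and $t \geq 1$. The one point requiring a little care is making sure the constants $\beta_i$ in the final statement are genuinely positive after the trade, i.e.\ that the margin $2\beta_i$ is large enough to give back a full $\beta_i$ after subtracting $\mu$; this is why the constants in \eqref{const} were deliberately defined as half of Pierfelice's original constants, leaving room for exactly this maneuver. A secondary bit of care: when an exponent such as $\frac1p - \frac1q$ is small, the power of $C(n)$ raised to it is still a harmless constant, and when the exponent is negative (it never is here, since $p \le q$) one would need the reverse bound — but under the stated hypotheses $1 \le p \le q$ this does not arise, so the argument goes through cleanly.
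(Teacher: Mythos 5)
Your proposal is correct and follows essentially the same route as the paper: both split into the regimes $0<t\le 1$ and $t\ge 1$ and, in the latter, trade one of the two factors $e^{-t\beta_i}$ (available precisely because the constants in \eqref{const} are half of Pierfelice's) for the inverse power of $t$, via the elementary bound that $t^{\alpha}e^{-\mu t}$ is bounded on $(0,\infty)$. Your inequality $\max(t^{-\alpha},1)\le C(\alpha,\mu)\,t^{-\alpha}e^{\mu t}$ is just a repackaging of the paper's observation that $t^{\sigma}/e^{\beta t}\le C(\sigma,\beta)$, so there is nothing substantive to flag.
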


\begin{proof}
To prove this modification of Theorem \ref{dsmest1}, we first show that for $\sigma, \beta >0$, there exists a uniform constant $C(\sigma, \beta)$ such that
\begin{equation} \label{ppp}
\frac{t^{\sigma}}{e^{\beta t}} \leq C(\sigma, \beta) \quad \text{ for all } t > 0.
\end{equation}
Letting $f(t) = \frac{t^{\sigma}}{e^{\beta t}}$, elementary calculus shows that $f$ achieves a global maximum on the interval $(0, \infty)$ at $t= \sigma / \beta$ with value 
\begin{equation}
C(\sigma, \beta) := f(\sigma / \beta) = \bigg(\frac{\sigma}{\beta} \bigg)^{\sigma} e^{-\sigma}.
\end{equation}

Therefore, if $t \geq 1$, we set $\sigma = \frac{n}{2}\big(\frac{1}{p}- \frac{1}{q}\big)$ and $\beta =\beta_1(n,p,q)$ in \eqref{ppp} to get
\begin{equation} \label{jjj}
\begin{split}
c_n(t)^{\big(\frac{1}{p}- \frac{1}{q}\big)}e^{-2t\beta_1(n,p,q)} &= C(n)e^{-t\beta_1(n,p,q)}e^{-t\beta_1(n,p,q)} \\
&\leq C(n, p, q)t^{-\frac{n}{2}\big(\frac{1}{p}- \frac{1}{q}\big)}e^{-t\beta_1(n,p,q)}.
\end{split}
\end{equation}

As well, if $0 < t < 1$,
\[
t^{-n/2} > 1, 
\]
so that
\begin{equation} \label{lll}
\begin{split}
c_n(t)^{\big(\frac{1}{p}- \frac{1}{q}\big)}e^{-2t\beta_1(n,p,q)} &= C(n)t^{-\frac{n}{2}\big(\frac{1}{p}- \frac{1}{q}\big)}e^{-2t\beta_1(n,p,q)} \\
&\leq C(n)t^{-\frac{n}{2}\big(\frac{1}{p}- \frac{1}{q}\big)}e^{-t\beta_1(n,p,q)}.
\end{split}
\end{equation}
Combining \eqref{jjj} and \eqref{lll}, and redefining $C(n, \delta, p)$ as the maximum of the two constants appearing in these estimates, implies \eqref{dispest}. 

The estimates \eqref{smest1}, \eqref{smest2}, and \eqref{smest3} are proved analogously. 
\end{proof}

Finally, we must prove estimates on the term $Gu$ appearing in \eqref{inteq} which will be crucial to our subsequent application of Picard iteration. To do this, we must prove some useful facts about the operator $\mathbb{P}$ defined in \eqref{projdef}, the first of which is that it is a bounded operator on $L^p(T^*\mathbb{H}^n)$. To that end, we recall the following theorem of Strichartz from \cite{Strichartz_laplacian}, which concerns the boundedness of Riesz transforms for functions on rank one symmetric spaces.

\begin{thm} \label{symbdd}  \cite{Strichartz_laplacian} 
Let $M$, a complete Riemannian manifold of dimension $n$, be a rank-one symmetric space. Then for any $1 < p < \infty$, $\nabla(-\Delta_g)^{-1/2}$ is a bounded operator from $L^p(M)$, the space of $L^p$ functions on $M$, to $L^p(\mathcal{T}^1_0M)$, the space of $L^p$ tensor fields of rank $(1,0)$.
\end{thm}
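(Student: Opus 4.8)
The plan is to reduce the statement to heat–semigroup estimates together with a Calderón–Zygmund argument localized to the unit scale, using in an essential way the fact that on the rank-one symmetric space $M$ (in the noncompact case relevant here, e.g. $\mathbb{H}^n$) the bottom of the spectrum of $-\Delta_g$ on functions is a positive number $\rho^2>0$; the compact case is the classical $L^p$ boundedness of order-zero pseudodifferential operators and needs no separate comment. The $L^2$ bound is immediate: since $\nabla^*\nabla=-\Delta_g$ on functions, integration by parts gives $\|\nabla(-\Delta_g)^{-1/2}f\|_{L^2}^2=\langle(-\Delta_g)^{1/2}f,\,(-\Delta_g)^{-1/2}f\rangle=\|f\|_{L^2}^2$, so $\nabla(-\Delta_g)^{-1/2}$ is an isometry on $L^2$. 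For $p\neq 2$ I would start from the subordination formula $\nabla(-\Delta_g)^{-1/2}=c\int_0^\infty \nabla e^{t\Delta_g}\,\frac{dt}{\sqrt t}$, which makes sense here precisely because $0$ lies strictly below $\sigma(-\Delta_g)$, and split it as $T_0+T_\infty$ with $T_0=c\int_0^1$ and $T_\infty=c\int_1^\infty$.

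The global part $T_\infty$ is the easy one, and it is exactly where the symmetric-space hypothesis pays off. The scalar counterpart of Pierfelice's smoothing estimate \eqref{smest1} (equivalently, the combination of a short-time gradient heat bound, valid since a homogeneous space has bounded geometry, with the exponential $L^p$ decay $\|e^{t\Delta_g}\|_{L^p\to L^p}\lesssim e^{-b_pt}$, $b_p>0$ for $1<p<\infty$) yields $\|\nabla e^{t\Delta_g}\|_{L^p\to L^p}\lesssim t^{-1/2}e^{-\beta t}$ for all $t>0$, with $\beta>0$. Hence $\|T_\infty f\|_{L^p}\le c\int_1^\infty\|\nabla e^{t\Delta_g}f\|_{L^p}\,\frac{dt}{\sqrt t}\lesssim\Bigl(\int_1^\infty t^{-1}e^{-\beta t}\,dt\Bigr)\|f\|_{L^p}<\infty$ for every $1<p<\infty$.

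For the local part $T_0$ I would run the classical Calderón–Zygmund machinery at the unit scale. Its kernel $K_0(x,y)=c\int_0^1\nabla_x p_t(x,y)\,\frac{dt}{\sqrt t}$ inherits Gaussian-type bounds from the short-time heat-kernel estimates on a manifold of bounded geometry: $|K_0(x,y)|\lesssim d(x,y)^{-n}$ when $d(x,y)\le 1$, $|K_0(x,y)|\lesssim e^{-c\,d(x,y)^2}$ when $d(x,y)\ge 1$, together with the corresponding Hörmander-type regularity estimate in each variable. The far piece (kernel supported in $d(x,y)\ge1$) has a uniformly integrable kernel — Gaussian decay easily beats the exponential volume growth — hence is bounded on every $L^p$. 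The near piece is a genuine Calderón–Zygmund operator on the locally doubling measured space $(M,d,\mathrm{vol})$; since $T_0=\nabla(-\Delta_g)^{-1/2}-T_\infty$ is bounded on $L^2$ (both summands are), standard Calderón–Zygmund theory makes it of weak type $(1,1)$ and bounded on $L^p$ for $1<p<\infty$. Combining the two pieces of $T_0$ and then adding $T_\infty$ gives the theorem.

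The main obstacle — and the reason one cannot simply estimate the subordination integral term by term — is the logarithmic divergence at $t=0$ of $\int_0^1\|\nabla e^{t\Delta_g}\|_{L^p\to L^p}\,\frac{dt}{\sqrt t}\sim\int_0^1 t^{-1}\,dt$: the small-time part genuinely has to be treated as a singular integral with cancellation rather than by absolute values. The second, more structural, point is the exponential $L^p$ decay $\|e^{t\Delta_g}\|_{L^p\to L^p}\lesssim e^{-b_pt}$ for $1<p<\infty$ (equivalently $0\notin\sigma(-\Delta_g)$); this is precisely the feature supplied by negative curvature / the symmetric-space structure and absent on $\mathbb R^n$, where instead one exploits polynomial volume growth so that global Calderón–Zygmund theory applies. (Alternatively, on a rank-one symmetric space one could bypass the heat-kernel splitting and analyze $\nabla(-\Delta_g)^{-1/2}$ directly through the spherical Fourier/Abel transform and a Mikhlin–Hörmander multiplier theorem adapted to $G/K$, but the cancellation issue near the identity reappears there as the usual multiplier smoothness hypotheses.)
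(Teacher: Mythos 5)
The paper does not prove this statement: Theorem \ref{symbdd} is imported verbatim from Strichartz \cite{Strichartz_laplacian} and used as a black box (its only role here is to feed into Corollary \ref{Pbdd}), so there is no internal proof to compare against. Judged on its own, your argument is a correct and standard route to the result --- it is essentially the ``local Calder\'on--Zygmund plus spectral gap at infinity'' strategy that Lohou\'e uses for general Cartan--Hadamard manifolds (and which the paper itself mentions as the tool Pierfelice needs in the non-symmetric setting), rather than anything specific to the symmetric-space structure beyond bounded geometry and $0\notin\sigma(-\Delta_g)$. The decomposition $T_0+T_\infty$ of the subordination integral, the bound $\|\nabla e^{t\Delta_g}\|_{L^p\to L^p}\lesssim t^{-1/2}e^{-\beta t}$ obtained by composing a unit-time gradient estimate with the exponential $L^p$ decay (which does follow for all $1<p<\infty$ by interpolating the $L^2$ spectral gap against sub-Markovian contractivity on $L^1$ and $L^\infty$), the Gaussian-versus-exponential-volume comparison for the off-diagonal part of $K_0$, and the recovery of the $L^2$ bound for $T_0$ by subtraction are all sound. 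What this buys over Strichartz's treatment is generality: your proof works on any complete manifold with bounded geometry, Ricci bounded below, and a positive bottom of the $L^2$ spectrum, whereas the rank-one hypothesis in the statement is only used to guarantee those properties.

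Two points would need to be written out carefully in a full proof, though neither is a structural gap. First, the weak $(1,1)$ step for the near-diagonal piece requires the \emph{local} Calder\'on--Zygmund theory, since $(\mathbb{H}^n,d,\mathrm{vol})$ is not globally doubling; this works precisely because the kernel is supported in $\{d(x,y)\le 1\}$, but it is a theorem that has to be invoked, not the textbook doubling version. Second, the range $2<p<\infty$ does not follow from weak $(1,1)$ and $L^2$ alone; you need the H\"ormander regularity of the kernel in the second variable (available here from two-sided short-time heat kernel gradient bounds on a homogeneous space) so that the adjoint, acting on $L^p$ one-forms, is also of weak type $(1,1)$, and then you dualize. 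You gesture at both of these, and also correctly identify the real obstruction (the logarithmic divergence of $\int_0^1 t^{-1}\,dt$, which forces a genuine singular-integral argument near $t=0$), so I would accept the proposal as a valid alternative proof of the cited theorem.
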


Using Theorem \ref{symbdd}, we state and prove  the following boundedness result for $\mathbb{P}$ on $\mathbb{H}^n$ as a corollary.

\begin{cor} \label{Pbdd}
The operator $\mathbb{P} = I - d(-\Delta_g)^{-1}d^*$ is a bounded operator on $L^p(\Omega^1 (\mathbb{H}^n))$, $1<p <\infty$.
\end{cor}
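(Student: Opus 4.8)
The plan is to factor the nontrivial part of $\mathbb{P}$ through the Riesz transform on functions and then combine Theorem \ref{symbdd} with a duality argument. Writing $\mathbb{P} = I - d(-\Delta_g)^{-1}d^*$ as in \eqref{projdef}, it suffices to show that $d(-\Delta_g)^{-1}d^*$ is bounded on $L^p(\Omega^1(\mathbb{H}^n))$ for $1 < p < \infty$, since $I$ is trivially bounded. The point is that one cannot estimate the three factors one at a time: $d^* \colon \Omega^1 \to \Omega^0$ is a first-order operator and in particular is not bounded on $L^p$. Instead, using that $-\Delta_g$ is a nonnegative self-adjoint operator on $L^2(\mathbb{H}^n)$ whose spectrum is bounded away from zero (the bottom of the spectrum on $\mathbb{H}^n$ being $(n-1)^2/4$), the functional calculus lets us write, on a suitable dense subspace of smooth functions and forms lying in $L^2 \cap L^p$ with appropriate decay,
\[
d(-\Delta_g)^{-1}d^* = \big(d(-\Delta_g)^{-1/2}\big)\big((-\Delta_g)^{-1/2}d^*\big) =: T \circ T^\ast,
\]
where $T = d(-\Delta_g)^{-1/2}$ acts on functions and $T^\ast = (-\Delta_g)^{-1/2}d^*$ is its formal adjoint (using self-adjointness of $(-\Delta_g)^{-1/2}$ and that $d^*$ is the formal adjoint of $d$ on $0$-forms).

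Next, $T$ is bounded on $L^p(\mathbb{H}^n)$ for every $1 < p < \infty$: for a function $f$ one has $df = (\mathrm{grad}\, f)^{\flat}$, and the musical isomorphism $\flat \colon T\mathbb{H}^n \to T^*\mathbb{H}^n$ is a pointwise isometry, so $\|d(-\Delta_g)^{-1/2}f\|_{L^p(\Omega^1)} = \|\nabla(-\Delta_g)^{-1/2}f\|_{L^p(\mathcal{T}^1_0)}$, which is controlled by $\|f\|_{L^p}$ by Theorem \ref{symbdd}, valid since $\mathbb{H}^n$ is a rank-one symmetric space. Having this bound for all exponents in $(1,\infty)$, a duality argument then handles $T^\ast$: since $T \colon L^{p'}(\mathbb{H}^n) \to L^{p'}(\Omega^1)$ is bounded, its Banach-space adjoint maps $L^{p}(\Omega^1) \to L^{p}(\mathbb{H}^n)$ boundedly, and one checks that this adjoint agrees on the dense subspace above with $(-\Delta_g)^{-1/2}d^* = T^\ast$. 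Composing, $T \circ T^\ast \colon L^p(\Omega^1) \to L^p(\Omega^1)$ is bounded for all $1 < p < \infty$, and extending by density gives that $d(-\Delta_g)^{-1}d^*$, hence $\mathbb{P}$, is bounded on $L^p(\Omega^1(\mathbb{H}^n))$.

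I expect the main obstacle to be the functional-analytic bookkeeping rather than any hard estimate: one must fix a concrete dense domain on which $d$, $d^*$, $(-\Delta_g)^{-1/2}$ and all the relevant compositions are unambiguously defined, verify that $d(-\Delta_g)^{-1}d^*$ genuinely factors as $T \circ T^\ast$ there (this is where the spectral gap of $-\Delta_g$ on $\mathbb{H}^n$ and the self-adjointness results of Strichartz are used), and confirm that the Banach adjoint of the Riesz transform coincides with the operator $(-\Delta_g)^{-1/2}d^*$ we actually need, so that the duality step applies to the right object. Once these identifications are in place, the boundedness is immediate from Theorem \ref{symbdd} and duality, with no recourse to Fourier-analytic methods.
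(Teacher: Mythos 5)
Your proposal is correct and follows essentially the same route as the paper: factor $d(-\Delta_g)^{-1}d^*$ as $\big(d(-\Delta_g)^{-1/2}\big)\big((-\Delta_g)^{-1/2}d^*\big)$, bound the first factor by Strichartz's Riesz transform theorem (Theorem \ref{symbdd}) via $df = \nabla f$, and obtain the second factor by duality as the adjoint of the first on $L^{p'}$. Your additional remarks about the spectral gap and the choice of dense domain are reasonable bookkeeping points but do not change the argument.
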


\begin{proof}
Fix $1 < p < \infty$. Since $\mathbb{H}^n$ is a symmetric space of rank one (see for example \cite{geogroup}), Theorem \ref{symbdd} applies for this manifold. Moreover, since $\mathbb{P}$ is defined as the identity minus the differential operator $d(-\Delta_g)^{-1}d^*$, it suffices to prove the $L^p$ boundedness of this latter term.

First we observe that the adjoint of $d(-\Delta_g)^{-1/2}$ is
\[
\big(d(-\Delta_g)^{-1/2}\big)^* = \big((-\Delta_g)^{-1/2}\big)^*d^* = (-\Delta_g)^{-1/2}d^*,
\]
where we have used that $(-\Delta_g)$ is positive and self-adjoint by \cite{Strichartz_laplacian}, which in turn implies its square root $(-\Delta_g)^{-1/2}$ is self-adjoint by the Spectral Theorem (see \cite{conway}, \cite{reedsimon1}, or  \cite{rudinfn}, for example). 

As we have previously shown, for a function $f$, the total covariant derivative $\nabla f$ is given by
\[
\nabla f = df,
\]
so that 
\begin{equation}
\begin{split}
\|\nabla f\|^p_{L^p(\mathbb{H}^n)} &= \int_{\mathbb{H}^n} g(\nabla f, \nabla f)^{p/2}\, dV \\
&= \int_{\mathbb{H}^n} g(d f, d f)^{p/2}\, dV \\
&=  \|d f\|^p_{L^p(\mathbb{H}^n)}.
\end{split}
\end{equation}
Hence Theorem \ref{symbdd} also shows $d(-\Delta_g)^{-1/2}$ is a bounded operator from $L^p$ functions to $L^p$ differential 1-forms on $\mathbb{H}^n$. Explicitly, we have the following
\begin{equation}
\big\|d(-\Delta_g)^{-1/2}\big\|_{op} \leq C(p),
\end{equation}
where $\|\cdot\|_{op}$ denotes the operator norm.

As well, letting $p'$ denote the H{\"o}lder conjugate of $p$, we also have by Theorem \ref{symbdd} that $d(-\Delta_g)^{-1/2}$ is a bounded operator from $L^{p'}$ functions to $L^{p'}$ differential 1-forms. And since $\|d(-\Delta_g)^{-1/2}\|_{op} = \|\big(d(-\Delta_g)^{-1/2}\big)^*\|_{op}$, the dual operator
\begin{equation} \label{rieszbdd}
(-\Delta_g)^{-1/2}d^* = \big(d(-\Delta_g)^{-1/2}\big)^*
\end{equation}
is thus a bounded operator from $L^{p}$ differential 1-forms (the dual space of $L^{p'}$ differential 1-forms) to $L^p$ functions (the dual space of $L^{p'}$ functions), which therefore gives 
\begin{equation} \label{adjbdd}
\big\|(-\Delta_g)^{-1/2}d^*\big\|_{op} \leq C(p').
\end{equation}
Finally, since the Spectral Theorem and functional calculus allow us to write \[(-\Delta_g)^{-1} = (-\Delta_g)^{-1/2}(-\Delta_g)^{-1/2},\] by \eqref{rieszbdd} and \eqref{adjbdd}, we have that for $u \in L^p(\Omega^1 (\mathbb{H}^n))$,
\begin{equation}
\begin{split}
\big\|d(-\Delta_g)^{-1}d^*u\big\|_{L^p(T^*\mathbb{H}^n)} &\leq C(p)C(p') \|u\|_{L^p(\Omega^1 (\mathbb{H}^n))}.
\end{split}
\end{equation}
We conclude that $d(-\Delta_g)^{-1}d^*$ is a bounded operator on $L^p(\Omega^1 (\mathbb{H}^n))$, and thus so is $\mathbb{P}$.
\end{proof}

Having shown $\mathbb{P}$ is bounded on $L^p(\Omega^1 (\mathbb{H}^n))$ for $1 < p < \infty$, we can now show $\mathbb{P}$ commutes with the semigroup $e^{tL}$.

\begin{lem} \label{commute}
On $\mathbb{H}^n$, $e^{tL}\mathbb{P} = \mathbb{P}e^{tL}$ for any $t \in \R$.
\end{lem}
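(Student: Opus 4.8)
The plan is to prove the commutation relation by passing through the spectral resolution of the relevant self-adjoint operators. The key observation is that $\mathbb{P} = I - d(-\Delta_g)^{-1}d^*$ and $L = \Delta_{B,1} - (n-1)I$ on coclosed $1$-forms, so it suffices to understand how the heat semigroup $e^{tL}$ interacts with the operator $d(-\Delta_g)^{-1}d^*$. Since $L$ is self-adjoint (by Strichartz's result that $\Delta_{B,1}$ is self-adjoint, as noted after \eqref{inteq}), we have a spectral measure $E_\lambda$ with $e^{tL} = \int e^{t\mu}\, dE_\mu$, and it is enough to show that each spectral projection $E_\lambda$ commutes with $\mathbb{P}$, or equivalently that $\mathbb{P}$ commutes with $L$ on a suitable core. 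The cleanest route is to show that $\mathbb{P}$ maps the domain of $L$ into itself and commutes with $L$ there, then invoke functional calculus.

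First I would work on smooth compactly supported $1$-forms and use the Hodge-type decomposition together with the commutation identities $d\,\Delta_g = \Delta_{B,1}\,d$ (on functions, where Bochner and Hodge Laplacians agree up to sign, so $d$ intertwines the scalar Laplacian with the $1$-form Laplacian) and $d^*\Delta_{B,1} = \Delta_g\, d^*$. Because the Ricci operator on $\mathbb{H}^n$ is $-(n-1)I$ times the metric, on $1$-forms $L = \Delta_{B,1} - (n-1)I$, and $\Delta_g$ on functions has no Ricci correction, one must track the curvature shift carefully: the relevant identity becomes $d\,(-\Delta_g) = (-\Delta_{B,1})\,d = (-(L) - (n-1))\,d$ acting appropriately, so that $d$ and $d^*$ intertwine $e^{t\Delta_g}$ with $e^{tL}$ up to the explicit scalar factor $e^{-(n-1)t}$. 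Concretely, I would establish that $e^{tL}\, d f = e^{-(n-1)t}\, d\, e^{t\Delta_g} f$ and dually $d^*\, e^{tL} = e^{-(n-1)t}\, e^{t\Delta_g}\, d^*$ on the appropriate spaces, and that $e^{t\Delta_g}$ commutes with $(-\Delta_g)^{-1}$ by the scalar functional calculus. Combining these, the scalar factors $e^{-(n-1)t}$ cancel in the composition $d(-\Delta_g)^{-1}d^*$, yielding $e^{tL}\, d(-\Delta_g)^{-1}d^* = d(-\Delta_g)^{-1}d^*\, e^{tL}$ on a dense subspace, hence on all of $L^2$ by density and boundedness of both sides (the latter from Corollary \ref{Pbdd} and contractivity of $e^{tL}$).

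Having settled the $L^2$ case, I would extend to general $t \in \R$ and, if needed, to the $L^p$ statement used elsewhere, by noting that $\mathbb{P}$ is bounded on $L^p$ for $1<p<\infty$ (Corollary \ref{Pbdd}), that $e^{tL}$ is bounded on $L^p$ (the dispersive estimate \eqref{dispest} with $p=q$, or the contractivity proved in the Appendix), and that $L^2 \cap L^p$ is dense in $L^p$; the identity then propagates by continuity. The main obstacle I anticipate is justifying the intertwining identities at the level of unbounded operators — in particular verifying that $d$ and $d^*$ genuinely map domains of the scalar and $1$-form Laplacians into each other on the noncompact manifold $\mathbb{H}^n$, where integration by parts requires control of boundary terms at infinity. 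This is handled by restricting first to $C_c^\infty$ forms (where all manipulations are classical) and then using essential self-adjointness of the Laplacians on complete manifolds — again Strichartz's result — so that the spectral-theoretic identities extend from the core to the full domains. A secondary subtlety is the precise bookkeeping of the curvature term $-(n-1)$ so that the exponential factors cancel exactly; I expect this to be a short computation once the operator identities are in place.
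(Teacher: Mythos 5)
Your overall strategy is the same as the paper's: reduce, via the spectral theorem and the $L^2$-boundedness of $\mathbb{P}$ from Corollary \ref{Pbdd}, to checking that $\mathbb{P}$ commutes with the generator $L$ on the core $C_c^{\infty}(T^*\mathbb{H}^n)$, using the fact that $d$ and $d^*$ commute with the appropriate Laplacians. The paper does this in one short direct computation and never needs the semigroup-level intertwining identities you propose.

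There is, however, one concrete error in your intermediate identities, and it is exactly the pitfall the paper's proof is careful to avoid. You apply the formula $L = \Delta_{B,1} - (n-1)I$ to the exact forms $d\big((-\Delta_g)^{-1}d^*u\big)$ produced by $\mathbb{P}$, but that formula is valid only on coclosed $1$-forms; exact forms are not coclosed, and the $-dd^*$ term in the full definition $L = -\Delta_{H,1} - dd^* - 2(n-1)$ cannot be dropped. On an exact form $df$ one has $dd^*df = \Delta_{H,1}df$, so in fact $Ldf = -2\Delta_{H,1}df - 2(n-1)df = 2\,d\Delta_g f - 2(n-1)df$, and the correct intertwining is $e^{tL}df = e^{-2(n-1)t}\,d\,e^{2t\Delta_g}f$, not $e^{-(n-1)t}\,d\,e^{t\Delta_g}f$. (Relatedly, $d$ intertwines $-\Delta_g$ with the Hodge Laplacian $\Delta_{H,1}$, not with $-\Delta_{B,1}$; on $\mathbb{H}^n$ these differ by the Ricci term, per \eqref{wb}.) Your conclusion survives the correction: the dual identity is $d^*e^{tL} = e^{-2(n-1)t}e^{2t\Delta_g}d^*$ with the same scalar semigroup, which commutes with $(-\Delta_g)^{-1}$ by the scalar functional calculus, so the factors still cancel inside $d(-\Delta_g)^{-1}d^*$. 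But as written the identities you plan to establish are false, and the ``short computation'' you defer is precisely where the $-dd^*$ term and the resulting factor of $2$ must be accounted for — a structural point about which formula for $L$ applies, not mere bookkeeping of $-(n-1)$. The paper sidesteps all of this by verifying $L\mathbb{P} = \mathbb{P}L$ directly with the full formula for $L$ and the identities $d\Delta_{H,k} = \Delta_{H,k+1}d$ and $d^*\Delta_{H,k} = \Delta_{H,k-1}d^*$, then invoking the spectral theorem once.
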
 
\begin{proof}
For a fixed $t \in \R$, the function $e^{t\cdot}: \R \to \R$ is Borel measurable on $\R$ and since we have shown in Corollary \ref{Pbdd} that $\mathbb{P}$ is bounded on the Hilbert space $L^2(T^*\mathbb{H}^n)$, it suffices by the Spectral Theorem to show
\begin{equation}
L\mathbb{P} = \mathbb{P}L,
\end{equation}
where the domains $D(\mathbb{P}), D(L),  D(L\mathbb{P})$, and $D(\mathbb{P}L)$ are all taken to be $C_c^{\infty}(T^*\mathbb{H}^n)$ (see, for instance, Theorem 4.11 in Chapter X of \cite{conway}). 

Since we are not assuming any divergence free condition here, we must use the full definition of $L$ given in \eqref{hodgeL}, which for $\mathbb{H}^n$ can be written as
\begin{equation} \label{hodgeL'}
L = - \Delta_{H,1} - dd^* - 2(n-1).
\end{equation}
Therefore, letting $u \in C_c^{\infty}(T^*\mathbb{H}^n)$, and using that $d\Delta_{H,k} = \Delta_{H,k+1}d$ and $d^*\Delta_{H,k} = \Delta_{H,k-1}d^*$, we have
\begin{equation}
\begin{split}
L\mathbb{P}u &= L(u-d(-\Delta_g)^{-1}d^*u) \\
&= Lu+ \Delta_{H,1}d(-\Delta_g)^{-1}d^*u + dd^*d(-\Delta_g)^{-1}d^*u + 2(n-1)d(-\Delta_g)^{-1}d^*u \\
&= Lu+ d\Delta_{H,0}(-\Delta_g)^{-1}d^*u + d(-\Delta_g)(-\Delta_g)^{-1}d^*u + d(-\Delta_g)^{-1}d^*(2(n-1)u) \\
&= Lu+ d(-\Delta_g)(-\Delta_g)^{-1}d^*u + d(-\Delta_g)^{-1}(-\Delta_g)d^*u + d(-\Delta_g)^{-1}d^*(2(n-1)u) \\
&= Lu+ d(-\Delta_g)^{-1}(-\Delta_g)d^*u + d(-\Delta_g)^{-1}d^*dd^*u + d(-\Delta_g)^{-1}d^*(2(n-1)u) \\
&= Lu+ d(-\Delta_g)^{-1}\Delta_{H,0}d^*u + d(-\Delta_g)^{-1}d^*dd^*u + d(-\Delta_g)^{-1}d^*(2(n-1)u) \\
&= Lu+ d(-\Delta_g)^{-1}d^*\Delta_{H,1}u + d(-\Delta_g)^{-1}d^*dd^*u + d(-\Delta_g)^{-1}d^*(2(n-1)u) \\
&= Lu -d(-\Delta_g)^{-1}d^*\big(-\Delta_{H,1}u-dd^*u-2(n-1)u\big) \\
&= Lu -d(-\Delta_g)^{-1}d^*Lu \\
&= \mathbb{P}Lu.
\end{split}
\end{equation}
\end{proof}

With Lemma \ref{commute} in hand, we next show the following useful and necessary result for the divergence of a solution $u$ defined by the integral equation \eqref{INT}.
\begin{prop} \label{divresult}
If $u$ is defined by \eqref{inteq}, then $d^*u = 0$.
\end{prop}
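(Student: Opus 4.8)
The plan is to apply $d^*$ to both sides of the integral equation \eqref{INT} and show that each piece vanishes. Write $u = u_0(t) + Gu(t)$ with $u_0(t) = e^{tL}a$ and $Gu(t) = -\int_0^t e^{(t-s)L}\mathbb{P}\,(\text{div}(u^{\#}\otimes u^{\#}))^{\flat}(s)\,ds$. First I would handle the free term: since $d^*a = 0$, I want $d^* e^{tL} a = 0$. Using the formula $d^* \Delta_{H,k} = \Delta_{H,k-1} d^*$ (already invoked in the proof of Lemma \ref{commute}) together with the expression \eqref{hodgeL'} for $L$, one checks that $d^* L = L_0 d^*$ on $1$-forms, where $L_0$ is the corresponding operator on functions (namely $d^* L u = d^*(-\Delta_{H,1} - dd^* - 2(n-1))u = (-\Delta_{H,0} - 2(n-1))d^* u$, using $d^* d d^* u = \Delta_{H,0} d^* u$ and $d^* \Delta_{H,1} = \Delta_{H,0} d^*$). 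Hence $d^*$ intertwines the two semigroups, $d^* e^{tL} = e^{t L_0} d^*$ on a suitable core, so $d^* e^{tL} a = e^{tL_0} d^* a = 0$.

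Next I would handle the nonlinear term $Gu(t)$. Pulling $d^*$ inside the time integral (justified by continuity/smoothing of the integrand), it suffices to show $d^* e^{(t-s)L} \mathbb{P}\, w = 0$ for $w = (\text{div}(u^{\#}\otimes u^{\#}))^{\flat}(s)$. By the same intertwining identity $d^* e^{(t-s)L} = e^{(t-s)L_0} d^*$, this reduces to showing $d^* \mathbb{P} w = 0$ for any (sufficiently regular) $1$-form $w$. But this is immediate from the definition \eqref{projdef}: $d^* \mathbb{P} w = d^* w - d^* d (-\Delta_g)^{-1} d^* w = d^* w - \Delta_{H,0}(-\Delta_g)^{-1} d^* w = d^* w - (-\Delta_g)(-\Delta_g)^{-1} d^* w = d^* w - d^* w = 0$, using $d^* d = \Delta_{H,0} = -\Delta_g$ on functions. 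Therefore each term in the Duhamel integral has vanishing $d^*$, so $d^* Gu(t) = 0$, and combining with the free part gives $d^* u = 0$.

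The main obstacle is not any single algebraic identity — those are routine manipulations with $d$, $d^*$, $\Delta_{H,k}$ and functional calculus of the type already displayed in Lemma \ref{commute} — but rather the \emph{domain and regularity bookkeeping}: justifying that $d^*$ commutes past the semigroup $e^{tL}$ (an unbounded-operator statement requiring a spectral-theoretic or core argument, analogous to the one used for $\mathbb{P}$ in Lemma \ref{commute}), that $d^*$ may be interchanged with the time integral defining $Gu$, and that the operators $(-\Delta_g)^{-1}$, $d$, $d^*$ act on the relevant $L^p$ objects so that the formal cancellations are legitimate. I would address the interchange with the integral by noting the integrand is continuous in $s$ with values in a space on which $d^*$ is bounded after the smoothing gain from $e^{(t-s)L}$ (using the smoothing estimate \eqref{smest3}), and handle the semigroup commutation exactly as in Lemma \ref{commute}, via the Spectral Theorem applied on $L^2(T^*\mathbb{H}^n)$ with core $C_c^\infty$, extended to $L^p$ by density and the boundedness results already established.
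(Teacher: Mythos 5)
Your proof is correct in substance, but it takes a different route from the paper's for the semigroup part. The paper's trick is to write $a=\mathbb{P}a$ (valid since $d^*a=0$), commute the \emph{bounded} operator $\mathbb{P}$ past $e^{tL}$ using Lemma \ref{commute}, and then kill everything with the single identity $d^*\mathbb{P}=0$ — the same identity you derive and use for the Duhamel term. You instead establish a new intertwining $d^*e^{tL}=e^{tL_0}d^*$ with a function-level generator $L_0$. That works, but it buys you an extra technical burden the paper avoids: commuting the \emph{unbounded} operator $d^*$ with the semigroup is more delicate than the bounded-operator commutation in Lemma \ref{commute} (the spectral-theorem statement cited there is for bounded operators commuting with $L$ on a core, and does not directly cover $d^*$; one would typically go through resolvents or a graph-norm/core argument). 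Since the paper's device $a=\mathbb{P}a$ plus $e^{tL}\mathbb{P}=\mathbb{P}e^{tL}$ plus $d^*\mathbb{P}=0$ already disposes of both the free term and the integrand uniformly, your intertwining for the free term is avoidable work. One small slip: with $L=-\Delta_{H,1}-dd^*-2(n-1)$ you get $d^*Lu=-\Delta_{H,0}d^*u-\Delta_{H,0}d^*u-2(n-1)d^*u$, so $L_0=-2\Delta_{H,0}-2(n-1)$, not $-\Delta_{H,0}-2(n-1)$; this is harmless since only the existence of \emph{some} intertwined semigroup matters when $d^*a=0$, but it should be corrected.
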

\begin{proof}
It follows by the definition of the operator $\mathbb{P}$ that for a 1-form $w$,
\begin{equation} \label{Pdivfree}
\begin{split}
d^*(\mathbb{P}w) &= d^*w - d^*d(-\Delta_g)^{-1}d^*w \\
&= d^*w - (-\Delta_g)(-\Delta_g)^{-1}d^*w \\
&= d^*w - d^*w \\
&=0.
\end{split}
\end{equation}
As well, for our initial condition $a \in L^n(\mathbb{H}^n)$,
\begin{equation}
\mathbb{P}a = a - d(-\Delta_g)^{-1}d^*a = a,
\end{equation}
since $d^*a = 0$. Thus by Lemma \ref{commute}
\begin{equation}
\begin{split}
d^*u(t) &= d^*e^{tL}a - d^*\int_0^te^{(t-s)L}\mathbb{P }\,\big(\text{div}(u \otimes u)\big)(s)\,ds \\
&= d^*e^{tL}(\mathbb{P}a) - \int_0^td^*e^{(t-s)L}\mathbb{P }\,\big(\text{div}(u \otimes u)\big)(s)\,ds \\
&= d^*\mathbb{P}e^{tL}a - \int_0^td^*\mathbb{P}e^{(t-s)L}\,\big(\text{div}(u \otimes u)\big)(s)\,ds \\
&=0,
\end{split}
\end{equation}
where we have used \eqref{Pdivfree} in the last step.
\end{proof}

With these facts about the operator $\mathbb{P}$ established, we can now state and prove the following important estimates on the term $Gu(t)$, which will be used often in the sequel.

\begin{lem} \label{gtermest}
Let $u$ be defined by equation \eqref{INT}. Then $Gu$ and $\nabla Gu$ satisfy the following estimates for  $0<\gamma \leq \alpha + \zeta < n$, where  $\beta_4 := \beta_3\big(n, \frac{n}{\alpha+\zeta},\frac{n}{\gamma}\big)$:
\begin{enumerate}
\item 
\begin{equation} \label{Gest}
\|Gu(t)\|_{L^{n/\gamma}} \leq C(n, \alpha, \gamma, \zeta)\int_0^t (t-s)^{-\frac{\alpha +\zeta - \gamma +1}{2}}e^{-(t-s)\beta_4}\|u(s)\|_{L^{n/\alpha}}\|u(s)\|_{L^{n/\zeta}}\,ds,
\end{equation}
\item 
\begin{equation} \label{Gdest}
\|Gu(t) - Gv(t)\|_{L^{n/\gamma}} \leq C(n, \alpha, \gamma, \zeta) \int_0^t (t-s)^{-\frac{\alpha +\zeta - \gamma +1}{2}}e^{-(t-s)\beta_4} \big(\|u\|_{L^{n/\alpha}}+\|v\|_{L^{n/\alpha}}\big) \|u-v\|_{L^{n/\zeta}}\,ds,
\end{equation}
\item 
\begin{equation} \label{DGest}
\|\nabla Gu(t)\|_{L^{n/\gamma}} \leq C(n, \alpha, \gamma, \zeta)\int_0^t (t-s)^{-\frac{\alpha +\zeta - \gamma +1}{2}}e^{-(t-s)\beta_4}\|u(s)\|_{L^{n/\alpha}}\|\nabla u(s)\|_{L^{n/\zeta}}\,ds,
\end{equation}
\item 
\begin{multline} \label{DGdest}
\|\nabla Gu(t) - \nabla Gv(t)\|_{L^{n/\gamma}} \leq C(n, \alpha, \gamma, \zeta) \int_0^t (t-s)^{-\frac{\alpha +\zeta - \gamma +1}{2}}e^{-(t-s)\beta_4} \big(\| u\|_{L^{n/\alpha}}\|\nabla u - \nabla v\|_{L^{n/\zeta}} \\+ \| \nabla v\|_{L^{n/\zeta}}\|u- v\|_{L^{n/\alpha}}\big)\,ds.
\end{multline}
\end{enumerate}
\end{lem}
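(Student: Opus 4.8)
The plan is to estimate the nonlinear term $Gu(t) = -\int_0^t e^{(t-s)L}\mathbb{P}\,\big(\text{div}(u^{\#}\otimes u^{\#})\big)^{\flat}(s)\,ds$ by combining the commutation of $\mathbb{P}$ with $e^{tL}$ (Lemma \ref{commute}), the $L^p$-boundedness of $\mathbb{P}$ (Corollary \ref{Pbdd}), and the general smoothing estimate \eqref{smest3} for $e^{tL}\nabla^* T_0$ applied to the tensor $T_0 = u^{\#}\otimes u^{\#}$. First I would recognize that $\big(\text{div}(u^{\#}\otimes u^{\#})\big)^{\flat}$ is, up to musical isomorphisms which are $L^p$-isometries, of the form $\nabla^* T_0$ with $T_0 = u^{\#}\otimes u^{\#} \in T\mathbb{H}^n \otimes T\mathbb{H}^n$ (with appropriate sign/index conventions so that \eqref{smest3} applies). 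Using Lemma \ref{commute} to move $\mathbb{P}$ past $e^{(t-s)L}$ and then Corollary \ref{Pbdd} to absorb $\mathbb{P}$ into a constant, I would apply \eqref{smest3} with $p = \frac{n}{\alpha+\zeta}$ and $q = \frac{n}{\gamma}$, noting that the hypothesis $0 < \gamma \leq \alpha + \zeta < n$ guarantees $1 < p \leq q < \infty$ as required, and that $\beta_3(n, \frac{n}{\alpha+\zeta}, \frac{n}{\gamma}) = \beta_4$. The exponent of $(t-s)$ produced is $-\frac{n}{2}\big(\frac{\alpha+\zeta}{n} - \frac{\gamma}{n} + \frac{1}{n}\big) = -\frac{\alpha+\zeta-\gamma+1}{2}$, exactly as in \eqref{Gest}.

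For the pointwise (in $s$) bound on $\|T_0\|_{L^{n/(\alpha+\zeta)}}$, I would use that $|u^{\#}\otimes u^{\#}| = |u|^2$ pointwise, together with Hölder's inequality in the form $\big\| |u|^2 \big\|_{L^{n/(\alpha+\zeta)}} = \big\| |u|\,|u| \big\|_{L^{n/(\alpha+\zeta)}} \leq \|u\|_{L^{n/\alpha}}\|u\|_{L^{n/\zeta}}$, which is valid since $\frac{\alpha}{n} + \frac{\zeta}{n} = \frac{\alpha+\zeta}{n}$. Putting these together inside the time integral gives \eqref{Gest}. For \eqref{Gdest}, I would proceed identically after writing the algebraic identity $u^{\#}\otimes u^{\#} - v^{\#}\otimes v^{\#} = u^{\#}\otimes(u^{\#}-v^{\#}) + (u^{\#}-v^{\#})\otimes v^{\#}$, applying the triangle inequality and then the same Hölder splitting to each term, and bounding $\|u\|_{L^{n/\alpha}} + \|v\|_{L^{n/\alpha}}$ crudely from the resulting $\|u\|_{L^{n/\alpha}}\|u-v\|_{L^{n/\zeta}} + \|v\|_{L^{n/\alpha}}\|u-v\|_{L^{n/\zeta}}$ (after possibly relabeling which factor sits in which Lebesgue exponent, using symmetry of the hypotheses in $\alpha,\zeta$).

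For the derivative estimates \eqref{DGest} and \eqref{DGdest}, I would instead write, using $\nabla_{u^{\#}}u = (\text{div}(u^{\#}\otimes u^{\#}))^{\flat}$ when $d^*u = 0$ (which holds by Proposition \ref{divresult}), the nonlinearity directly as a contraction $u^{\#}\cdot\nabla u$ rather than as the divergence of a tensor; then $\nabla Gu(t) = -\int_0^t \nabla e^{(t-s)L}\mathbb{P}(u^{\#}\cdot\nabla u)(s)\,ds$, and I would apply the smoothing estimate \eqref{smest2} for $\|\nabla u_0(t)\|_{L^q}$ with $u_0 = e^{(t-s)L}\mathbb{P}(u^{\#}\cdot\nabla u)(s)$, again using Lemma \ref{commute} and Corollary \ref{Pbdd} to handle $\mathbb{P}$. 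Wait — \eqref{smest2} has the same $(t-s)$-exponent $-\frac{n}{2}(\frac{1}{p}-\frac{1}{q}+\frac{1}{n})$, which with $p = \frac{n}{\alpha+\zeta}$, $q=\frac{n}{\gamma}$ again yields $-\frac{\alpha+\zeta-\gamma+1}{2}$, and $\beta_3$ again equals $\beta_4$; the pointwise bound is $\big\||u^{\#}\cdot\nabla u|\big\|_{L^{n/(\alpha+\zeta)}} \leq \|u\|_{L^{n/\alpha}}\|\nabla u\|_{L^{n/\zeta}}$ by Hölder, giving \eqref{DGest}; and \eqref{DGdest} follows from the identity $u^{\#}\cdot\nabla u - v^{\#}\cdot\nabla v = u^{\#}\cdot\nabla(u-v) + (u^{\#}-v^{\#})\cdot\nabla v$ plus two Hölder splittings. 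The main obstacle I anticipate is purely bookkeeping: verifying that the musical isomorphisms and the $\flat$/$\#$ operations, together with the sign conventions in the definition of $\nabla^*$, genuinely put the nonlinearity into the exact form to which \eqref{smest3} (resp. \eqref{smest2}) applies, and checking that the index inequalities $0 < \gamma \leq \alpha+\zeta < n$ translate precisely into the admissibility ranges $1 < p \leq q < \infty$ of those estimates — there is no deep difficulty, but the constants and exponents must be matched carefully so that $\beta_4 = \beta_3(n,\frac{n}{\alpha+\zeta},\frac{n}{\gamma})$ comes out exactly as claimed.
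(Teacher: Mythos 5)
Your proposal is correct and follows essentially the same route as the paper: commuting $\mathbb{P}$ with $e^{(t-s)L}$ via Lemma \ref{commute}, absorbing $\mathbb{P}$ via Corollary \ref{Pbdd}, applying the smoothing estimate \eqref{smest3} with $p=\tfrac{n}{\alpha+\zeta}$, $q=\tfrac{n}{\gamma}$ and H\"older for \eqref{Gest}--\eqref{Gdest}, and switching to the covariant-derivative form $\nabla_{u^{\#}}u$ with the pointwise bound $|\nabla_{u^{\#}}u|\leq|u||\nabla u|$ and \eqref{smest2} for \eqref{DGest}--\eqref{DGdest}, with the same bilinear decompositions for the difference estimates.
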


\begin{proof}
We first observe that by Proposition \ref{divresult}, if $u$ is defined by \eqref{INT}, then $d^*u = 0$, so that we may move freely back and forth between
\[
\nabla_{u^{\#}} u \quad \text{ and } \quad (\text{div} (u^{\#} \otimes u^{\#}))^{\flat}
\] 
as needed, since these expressions are equal whenever $d^*u = 0$. Next we observe that for $T \in T\mathbb{H}^n \otimes T^*\mathbb{H}^n$,
\begin{equation}
\nabla^*T = -\text{div}(T^{\#}),
\end{equation}
so that \eqref{smest3} implies 
\begin{equation} \label{smest6}
\big\|e^{tL}\text{div}(T^{\#})\big\|_{L^q(\mathbb{H}^n)} \leq C(n,p,q)\,t^{-\frac{n}{2}\big(\frac{1}{p}- \frac{1}{q}+\frac{1}{n}\big)}e^{-t\beta_3(n,p,q)}\|T^{\#}\|_{L^p(\mathbb{H}^n)} \quad 1 < p \leq q < \infty,
\end{equation}
for all tensors $T^{\#} \in TM \otimes TM$, where now $\#$ represents the induced musical isomorphism $(\cdot)^\#: T\mathbb{H}^n \otimes T^*\mathbb{H}^n \to T\mathbb{H}^n \otimes T\mathbb{H}^n$ (see, for example, \cite{Pierfelice}). 

By commuting $\mathbb{P}$ and $e^{(t-s)L}$ according to Lemma \ref{commute}, using the $L^{q}$ boundedness of $\mathbb{P}$ from Corollary \ref{Pbdd}, and applying \eqref{smest6} along with H{\"o}lder's inequality, we have that
\begin{equation}
\begin{split}
\|Gu(t)\|_{L^q(\mathbb{H}^n)} &\leq C(n,p,q)\int_0^t (t-s)^{-\frac{n}{2}\big(\frac{1}{p}- \frac{1}{q}+\frac{1}{n}\big)}e^{-(t-s)\beta_3(n,p,q)}\|u^{\#}\otimes u^{\#}(s)\|_{L^p}\,ds \\
&\leq C(n,\mu, \lambda, q)\int_0^t (t-s)^{-\frac{n}{2}\big(\frac{1}{\mu}+\frac{1}{\lambda}- \frac{1}{q}+\frac{1}{n}\big)}e^{-(t-s)\beta_3\big(n,\frac{\mu\lambda}{\mu+\lambda},q\big)}\|u(s)\|_{L^{\mu}}\|u(s)\|_{L^{\lambda}}\,ds,
\end{split}
\end{equation}
where $\lambda, \mu>0$ and $\frac{1}{\mu} +\frac{1}{\lambda} = \frac{1}{p}$ and where we have also used that $(\cdot)^{\#}$ and $(\cdot)^{\flat}$ are isomorphisms.

Taking this inequality and setting
\begin{equation}
q= \frac{n}{\gamma}, \quad \mu = \frac{n}{\alpha}, \quad \lambda = \frac{n}{\zeta},
\end{equation}
where $\gamma \leq \alpha +\zeta <n$, proves \eqref{Gest}. As well, using the fact that
\[
u^{\#}\otimes u^{\#} - v^{\#}\otimes v^{\#} = u^{\#}\otimes (u-v)^{\#} - (v-u)^{\#}\otimes v^{\#},
\]
a slight modification of the argument leading to \eqref{Gest} gives the difference estimate \eqref{Gdest}.

The derivative estimate \eqref{DGest} is simpler, for in this case we only need the $L^q$ boundedness of $\mathbb{P}$ from Corollary \ref{Pbdd} and the following pointwise bound for $C^1$ 1-forms $v_1, v_2$: 
\begin{equation}  \label{covarest}
g(\nabla_{v_2^{\#}}v_1 , \nabla_{v_2^{\#}} v_1) \leq |\nabla v_1|^2 |v_2|^2,
\end{equation}
which can be shown much like it would in the Euclidean case by using geodesic normal coordinates (here, $| \cdot | = \sqrt{g(\cdot, \cdot)}$).

Indeed, using Corollary \ref{Pbdd} and the pointwise estimate \eqref{covarest}, passing the covariant derivative through the time integral defining $Gu$, applying estimate \eqref{smest2} from Theorem \ref{dsmest}, and using H{\"o}lder's inequality, we have
\begin{equation}
\begin{split}
\|\nabla Gu(t) \|_{L^q(\mathbb{H}^n)}  &\leq \int_0^t \big\| \nabla e^{(t-s)L} (\mathbb{P} \nabla_{u^{\#}} u(s)) \|_{L^q} \,ds \\
& \leq C(n, p, q) \int_0^t (t-s)^{-\frac{n}{2}\big(\frac{1}{p}- \frac{1}{q}+\frac{1}{n}\big)}e^{-(t-s)\beta_3(n,p,q)}\big\| |\nabla u | |u|\big\|_{L^p}  \\
&\leq C(n,\mu, \lambda, q)\int_0^t (t-s)^{-\frac{n}{2}\big(\frac{1}{\mu}+\frac{1}{\lambda}- \frac{1}{q}+\frac{1}{n}\big)}e^{-(t-s)\beta_3\big(n,\frac{\mu\lambda}{\mu+\lambda},q\big)}\|u(s)\|_{L^{\mu}}\|\nabla u(s)\|_{L^{\lambda}}\,ds,
\end{split}
\end{equation}
where as before, $\lambda, \mu>0$ and $\frac{1}{\mu} +\frac{1}{\lambda} = \frac{1}{p}$. We finish the proof of \eqref{DGest} by once again setting 
\begin{equation}
q= \frac{n}{\gamma}, \quad \mu = \frac{n}{\alpha}, \quad \lambda = \frac{n}{\zeta}.
\end{equation}
It remains to show \eqref{DGdest} and for this, we can proceed much as in the proof of \eqref{DGest}, again using the pointwise estimate \eqref{covarest} and the fact that 
\[
\nabla_{u^{\#}} u - \nabla_{v^{\#}} v = \nabla_{u^{\#}}(u-v) - \nabla_{(v-u)^{\#}}v.
\]
\end{proof}

\section{Picard iteration on $\mathbb{H}^n$ and proof of Theorem \ref{EUthm}}\label{mildsol} 
\subsection{Local existence for $q >n$} \label{mainexist}
We will solve \eqref{INT} and thus prove Theorem \ref{EUthm} by Picard iteration. Starting with $u_0(t) = e^{tL}a$, where $a \in L^n(\mathbb{H}^n)$ is the initial condition, we construct the following sequence
 \begin{equation} \label{iterate}
 u_{k+1} = u_0+Gu_k, \quad k=0,1,2,3,\ldots
 \end{equation}
Let $0 < \delta < 1$ be fixed. We will first show by induction that the sequence defined by \eqref{iterate} exists and satisfies
\begin{equation} \label{deltaest}
t^{\big(\frac{1}{2}- \frac{\delta}{2} \big)}e^{t\beta} u_k \in L^{\infty}\big([0,T], L^{n/\delta}(\mathbb{H}^n)\big),
\end{equation}  
with norm
\begin{equation} \label{Lnseqnorm}
M_k := \sup_{0 \leq t < T} t^{\big(\frac{1}{2}-\frac{\delta}{2}\big)}e^{t\beta}\|u_k(t)\|_{L^{n/\delta}(\mathbb{H}^n)}, \quad k=0,1,2,3,\ldots,
\end{equation}
where $T>0$ is to be chosen, and 
\begin{equation} \label{1stbeta}
\beta = \min\bigg\{ \beta_1\bigg(n,n,\frac{n}{\delta}\bigg),  \beta_3\bigg(n,\frac{n}{2\delta},\frac{n}{\delta}\bigg)\bigg\}. 
\end{equation}
Notice with $\beta$ so defined, we have $\beta >0$, which can easily be verified by the definitions given in \eqref{const}. As well, for $t \geq 0$, we have
\begin{equation}
e^{-t \beta_1\big(n,n,\frac{n}{\delta}\big)} \leq e^{-t\beta} \quad \text{and} \quad e^{-t \beta_3\big(n,\frac{n}{2\delta},\frac{n}{\delta}\big)} \leq e^{-t\beta},
\end{equation}
which are estimates that will be used liberally in subsequent computations.

For $k=0$, we set $p=n$ and $q=n/\delta$ in \eqref{dispest} to get
\begin{equation} \label{u0iter}
\begin{split}
\big\|u_0(t)\big\|_{L^{n/\delta}(\mathbb{H}^n)} &\leq C(n,\delta)\,t^{-\big(\frac{1}{2}- \frac{\delta}{2}\big)}e^{-t\beta_1\big(n,n,\frac{n}{\delta}\big)}\|a\|_{L^n(\mathbb{H}^n)} \\
&\leq C(n,\delta)\,t^{-\big(\frac{1}{2}- \frac{\delta}{2}\big)}e^{-t\beta}\|a\|_{L^n(\mathbb{H}^n)}. \\
\end{split} 
\end{equation}
Thus 
\begin{equation} \label{M0bound}
M_0  \leq C(n,\delta)\|a\|_{L^n(\mathbb{H}^n)},
\end{equation}
and \eqref{deltaest} is satisfied for $k=0$. 

Assuming now that \eqref{deltaest} is true for $k>0$, we next show it holds for $k+1$. Since the first term in \eqref{iterate} has just been estimated, we must estimate the term $Gu_k(t)$ in the $L^{n/\delta}(\mathbb{H}^n)$ norm. To do this, we set $\alpha = \gamma = \zeta = \delta$ in \eqref{Gest}, and use that $e^{-\beta t} \leq 1$ for all $t \geq 0$, which gives
\begin{equation} \label{ndeltacomp1}
\begin{split}
\|Gu_k(t)\|_{L^{n/\delta}(\mathbb{H}^n)} &\leq C(n,\delta) \int_0^t (t-s)^{-(1+\delta)/2} e^{-(t-s)\beta_3\big(n,\frac{n}{2\delta},\frac{n}{\delta}\big)}\big(\|u_k(s)\|_{L^{n/\delta}(\mathbb{H}^n)}\big)^2\,ds \\
&\leq C(n,\delta)M_k^2\int_0^t (t-s)^{-(1+\delta)/2}e^{-(t-s)\beta} \big(s^{-\big(\frac{1}{2}-\frac{\delta}{2}\big)}e^{-\beta s}\big)^2\,ds \\
&= C(n,\delta)M_k^2e^{-t\beta}\int_0^t (t-s)^{-(1+\delta)/2}s^{-1+\delta}e^{-\beta s}  \,ds \\
&\leq C(n,\delta)M_k^2e^{-t\beta}\int_0^t (t-s)^{-(1+\delta)/2}s^{-1+\delta} \,ds.
\end{split}
\end{equation}
To compute the terminal integral in this inequality, we use the Beta function (see, for instance, \cite{math_meth_phys}), which is defined for $x, y \in \mathbb{C}$ by
\begin{equation} \label{betadef}
B(x,y) = \int_0^1 {\tau}^{x-1}(1-\tau)^{y-1}\,d\tau,
\end{equation}
and is finite whenever $\text{Re} (x),\,\text{Re} (y) >0$. After performing the substitution $\tau = s/t$, the terminal integral in \eqref{ndeltacomp1} can be rewritten as
\begin{equation} \label{usub}
\begin{split}
\int_0^t (t-s)^{-(1+\delta)/2}s^{-1+\delta}  \,ds &= t^{-\big(\frac{1}{2}-\frac{\delta}{2}\big)} \int_0^1 \tau^{\delta -1} (1-\tau)^{\big(\frac{1-\delta}{2}\big) -1} \,d\tau \\
&=  t^{-\big(\frac{1}{2}-\frac{\delta}{2}\big)} B\bigg(\delta, \frac{1-\delta}{2}\bigg),
\end{split}
\end{equation}
and $B\big(\delta, \frac{1-\delta}{2}\big)$ is finite, since both $0 < \delta <1$ and $ 0 < \frac{1-\delta}{2} <1$. Hence 
\begin{equation} \label{ndeltacomp}
\begin{split}
\|Gu_k(t)\|_{L^{n/\delta}(\mathbb{H}^n)} &\leq C(n,\delta)M_k^2 t^{-\big(\frac{1}{2}-\frac{\delta}{2}\big)}e^{-t\beta},
\end{split}
\end{equation}
where we have have absorbed the Beta function $B\big(\delta, \frac{1-\delta}{2}\big)$ into the constant $C(n, \delta)$,  Thus we have shown
\begin{equation}
t^{\big(\frac{1}{2}-\frac{\delta}{2}\big)}e^{t\beta}\|u_{n+1}(t)\|_{L^{n/\delta}(\mathbb{H}^n)} \leq M_0 + C(n,\delta)M_k^2,
\end{equation}
which proves \eqref{deltaest} for $k+1$ and leads to the recurrence inequality
\begin{equation} \label{kplus1}
M_{k+1} \leq M_0 + C(n, \delta)M_k^2.
\end{equation}
We claim this defines a bounded sequence $\{M_k\}$ where
\begin{equation} \label{M0global}
M_k  < M:= \frac{1}{2C(n, \delta)}, \quad k=0,1,2,3,\ldots,
\end{equation}
provided
\begin{equation} \label{estrequire}
M_0 < \frac{1}{4C(n,\delta)},
\end{equation}
which is possible according to \eqref{Lnseqnorm} by choosing $T>0$ sufficiently small. Indeed, for the base case, we have
\[
M_0 < \frac{1}{4C(n,\delta)} < \frac{1}{2C(n,\delta)},
\]
so that if \eqref{M0global} is true for $k>0$, then by the recurrence inequality \eqref{kplus1},
\begin{equation}
\begin{split}
M_{k+1} &\leq M_0 + C(n, \delta)M_k^2 \\
& <\frac{1}{4C(n,\delta)} + C(n, \delta)\bigg(\frac{1}{2C(n,\delta)}\bigg)^2 \\
&= \frac{1}{2C(n,\delta)},
\end{split}
\end{equation}
which verifies \eqref{M0global} for $k+1$ and thus the entire sequence of norms $\{M_k\}$.

With $T>0$ chosen so that \eqref{estrequire} holds, we next show \eqref{M0global} implies the sequence $u_k$ defined by \eqref{iterate} converges uniformly. Define a new sequence
\begin{equation}
w_k(t) = u_k(t) - u_{k-1}(t), \quad k=0,1,2,\ldots,
\end{equation} 
where $u_{-1}(t) =0$. We will prove by induction that for each $k=0,1,2,\ldots$,
\begin{equation} \label{maindest}
\|w_{k}(t)\|_{L^{n/\delta}(\mathbb{H}^n)} \leq \frac{M(2C(n,\delta)M)^k}{t^{(1-\delta)/2}e^{t\beta}}
\end{equation}
for some constant $C(n,\delta)$. For $k=0$, we have by \eqref{M0global} that $M_0 \leq M$. Hence for $0 \leq t \leq T$,
\begin{equation}
\begin{split}
t^{(1-\delta)/2}e^{t\beta}\|w_0(t)\|_{L^{n/\delta}(\mathbb{H}^n)}  &= t^{(1-\delta)/2}e^{t\beta}\|u_0(t)\|_{L^{n/\delta}(\mathbb{H}^n)} \\
&\leq M,
\end{split}
\end{equation}
so that
\begin{equation}
\begin{split}
\|w_0(t)\|_{L^{n/\delta}(\mathbb{H}^n)}  &\leq t^{-(1-\delta)/2}e^{-t\beta }M \\
&\leq \frac{M(2C(n,\delta)M)^0}{t^{(1-\delta)/2}e^{t\beta}}.
\end{split}
\end{equation}

Assuming that \eqref{maindest} holds for $k>0$, we estimate the $(k+1)$-th term. Setting $\alpha = \gamma = \zeta =  \delta$ in \eqref{Gdest} and using \eqref{M0global}, we can proceed as in \eqref{ndeltacomp1} to get
\begin{equation}
\begin{split}
\|w_{k+1}(t)\|_{L^{n/\delta}(\mathbb{H}^n)} &=\|u_{k+1}(t)-u_k(t)\|_{L^{n/\delta}(\mathbb{H}^n)} \\ 
&= \|Gu_{k}(t) - Gu_{k-1}(t)\|_{L^{n/\delta}(\mathbb{H}^n)} \\
& \leq C(n,\delta) \int_0^t (t-s)^{-(1+\delta)/2}e^{-(t-s)\beta } \big(\|u_{k}(s)\|_{L^{n/\delta}}+\|u_{k-1}(s)\|_{L^{n/\delta}}\big) \|w_{k}(s)\|_{L^{n/\delta}}\,ds \\
& \leq 2C(n,\delta)M e^{-t\beta }\int_0^t (t-s)^{-(1+\delta)/2} s^{-(1-\delta)/2} \|w_{k}(s)\|_{L^{n/\delta}(\mathbb{H}^n)}\,ds \\
&\leq (2C(n,\delta)M)(M(2C(n,\delta)M)^k)e^{-t\beta } \int_0^t (t-s)^{-(1+\delta)/2} s^{-(1-\delta)/2} s^{-(1-\delta)/2}e^{-\beta s}\,ds \\
&\leq (2C(n,\delta)M)(M(2C(n,\delta)M)^k)e^{-t\beta } \int_0^t (t-s)^{-(1+\delta)/2} s^{-(1-\delta)/2} s^{-(1-\delta)/2}\,ds \\
&= (2C(n,\delta)M)(M(2C(n,\delta)M)^k) e^{-t\beta }\int_0^t (t-s)^{-(1+\delta)/2} s^{-1+\delta} \,ds \\
&=\frac{M(2C(n,\delta)M)^{k+1}}{t^{(1-\delta)/2}e^{t\beta }},
\end{split}
\end{equation}
where in the last step, we have computed the integral using the same Beta function computation as employed in \eqref{usub} and have absorbed the resulting convergent Beta function into the general constant $C(n, \delta)$.

This verifies \eqref{maindest} for all $k =0, 1 ,2 \ldots$ and since $2C(n,\delta)M < 1$ by assumption, we have that $\displaystyle{\sum_{j=0}^{\infty}w_k}$ converges uniformly and absolutely to 1-form $u$ such that $ t^{\big(\frac{1}{2}- \frac{\delta}{2} \big)}e^{t\beta } u \in L^{\infty}\big([0,T], L^{n/\delta}(\mathbb{H}^n)\big)$ and such that
\begin{equation} \label{uniform}
\begin{split}
u &= \lim_{k \to \infty} \sum_{j=0}^k w_j \\
&= \lim_{k \to \infty} u_k.
\end{split}
\end{equation}

\subsection{Uniqueness}
Having concluded the local existence part of Theorem \ref{PFthm}, we now deal with uniqueness. Let $a, a'\in L^n(\mathbb{H}^n)$ and suppose $u$ and $v$ are the solutions to \eqref{INT} for these initial data, with respective times of existence $T$ and $T'$. Then
\begin{equation}
\begin{split}
u(t) &= u_0(t) + Gu(t), \\
v(t) &= v_0(t) + Gv(t),
\end{split}
\end{equation}
where $u_0(t) = e^{tL}a$ and $v_0(t) = e^{tL}a'$. Letting
\[
w(t) = u(t) - v(t),
\] 
using dispersive estimate \eqref{dispest} and difference estimate \eqref{Gdest} with $\alpha = \gamma = \zeta = \delta$, and performing similar calculations as above, we have
\begin{equation} 
\begin{split}
\|w(t) \|_{L^{n/\delta}(\mathbb{H}^n)} &\leq \|e^{tL}(a-a'))\|_{L^{n/\delta}(\mathbb{H}^n)} + C(n,\delta)Me^{-t\beta}\int_0^t (t-s)^{-\frac{(1+\delta)}{2}}s^{-\frac{(1-\delta)}{2}} \|w(s)\|_{L^{n/\delta}(\mathbb{H}^n)}\,ds \\
&=C(n,\delta) t^{-\frac{(1-\delta)}{2}}e^{-t\beta }\|a-a'\|_{L^{n}(\mathbb{H}^n)}+t^{-\frac{(1-\delta)}{2}}e^{-t\beta}\int_0^t g(t,s) s^{\frac{(1-\delta)}{2}} \|w(s)\|_{L^{n/\delta}(\mathbb{H}^n)}\,ds,
\end{split}
\end{equation} 
where $g(t,s) =C(n,\delta)M(t-s)^{-(\delta+1)/2}s^{-(1-\delta)}t^{(1-\delta)/2}$. Multiplying through by $t^{(1-\delta)/2}e^{t\beta}$ in the above gives
\begin{equation}\label{pregron} 
t^{\frac{(1-\delta)}{2}} e^{t\beta}\|w(t) \|_{L^{n/\delta}(\mathbb{H}^n)} \leq C(n,\delta) \|a-a'\|_{L^{n}(\mathbb{H}^n)}+\int_0^t g(t,s) s^{\frac{(1-\delta)}{2}} \|w(s)\|_{L^{n/\delta}(\mathbb{H}^n)}\,ds,
\end{equation}
and after applying Gr{\"o}nwall's inequality, we get
\begin{equation}
t^{\frac{(1-\delta)}{2}}e^{t\beta}\|w(t) \|_{L^{n/\delta}(\mathbb{H}^n)} \leq C(n,\delta)\|a-a'\|_{L^n(\mathbb{H}^n)}\text{exp}\bigg(\int_0^t g(t,s)\,ds\bigg),
\end{equation}
so that defining $\tilde{T} = \min\{T,T'\}$,
\begin{equation}  \label{gron}
\begin{split}
\sup_{0 \leq t < \tilde{T}} t^{\frac{(1-\delta)}{2}}e^{t\beta}\|w(t) \|_{L^{n/\delta}} &\leq C(n,\delta)\|a-a'\|_{L^n}\text{exp}\bigg(\int_0^t g(t,s)\,ds\bigg) \\
& =C(n,\delta)\|a-a'\|_{L^n}\text{exp}\bigg(C(n,\delta)Mt^{(1-\delta)/2} \int_0^t (t-s)^{-(\delta+1)/2}s^{-(1-\delta)} \,ds\bigg) \\
& =C(n,\delta)\|a-a'\|_{L^n}\text{exp}\bigg(C(n,\delta)Mt^{(1-\delta)/2}t^{-(1-\delta)/2}B\bigg(\delta,\frac{1-\delta}{2}\bigg) \bigg) \\
& =C(n,\delta)\|a-a'\|_{L^n}\text{exp}\bigg(C(n,\delta)MB\bigg(\delta,\frac{1-\delta}{2}\bigg) \bigg) \\
&=C(n, \delta, M) \|a-a'\|_{L^n},
\end{split}
\end{equation} 
where we have used the same Beta function computation as in \eqref{usub}. Thus in the case where $a = a'$, \eqref{gron} shows $w = 0$ in $L^{\infty}([0,\tilde{T}), L^{n/\delta}(\mathbb{H}^n))$, so that $u = v$ and we can take $T=T'=\tilde{T}$. This proves uniqueness.

\subsection{Continuous dependence on initial data}
Let $\varepsilon >0$ and suppose $a, a' \in L^{n}(\mathbb{H}^n)$. If $u$ is the solution constructed in the preceding sections for $a$ with time of existence $T$ and if $v$ is the corresponding solution for $a'$ with time of existence $T'$, then letting $\tilde{T} = \min\{T,T'\}$, our Gr{\"o}nwall estimate \eqref{gron} shows
\begin{equation}
\sup_{0 \leq t < \tilde{T}} t^{\frac{(1-\delta)}{2}}e^{t\beta}\|u(t) - v(t) \||_{L^{n/\delta}(\mathbb{H}^n)}\leq C(n, \delta, M) \|a-a'\|_{L^n(\mathbb{H}^n)}.
\end{equation}
Therefore, if $a$ and $a'$ are such
\begin{equation}
\|a-a'\|_{L^{n}(\mathbb{H}^n)} < \frac{\eps}{C(n, \delta, M)},
\end{equation}
then
\begin{equation}
\sup_{0 \leq t < \tilde{T}} t^{\frac{(1-\delta)}{2}}e^{t\beta}\|u(t) - v(t) \|_{L^{n/\delta}} < \eps,
\end{equation}
which establishes continuous dependence on the initial data.

\subsection{Global well-posedness}
For global existence and uniqueness, we next observe by virtue of \eqref{M0bound}, \eqref{M0global}, and \eqref{estrequire} that if 
\begin{equation} \label{asmall}
\|a\|_{L^n(\mathbb{H}^n)} < \frac{1}{4(C(n, \delta))^2},
\end{equation}
then $M_0 < 1/4C(n,\delta)$ for any choice of $T>0$. In this case, the uniform convergence of $u_k$ to $u$ can be shown on the interval $[0,\infty)$.

\subsection{The case q=n} \label{caseq=n}
\subsubsection{Estimates}
So far, we have shown existence and uniqueness of a solution $u$ of \eqref{INT} satisfying
\begin{equation}
t^{\big(\frac{1}{2}- \frac{n}{2q} \big)}e^{t\beta} u \in L^{\infty}\big([0,T), L^{q}(\mathbb{H}^n)\big), \quad n < q < \infty,
\end{equation}  
for some $\beta >0$ depending only on $n$ and $q$, since, given any $q >n$, $\delta = n/q$ satisifes $0<\delta<1$ as assumed above. As well, we have shown we can extend $T$ to infinity if $\|a\|_{L^n}$ is sufficiently small. Therefore, it remains to show that the limit $u$ satisfies $e^{t\beta'}u \in L^{\infty}\big([0,T), L^{n}(\mathbb{H}^n)\big)$, where for a fixed $0 < \delta <1$, 
\begin{equation} \label{2ndbeta}
\beta' = \min\bigg\{\beta, \beta_1(n,n,n), \beta_3\bigg(n,\frac{n}{\delta+1},n\bigg)\bigg\},
\end{equation}
with $\beta$ being defined by \eqref{1stbeta}.

To that end, by
\eqref{dispest} with $q=p=n$, 
\begin{equation}\label{q=nu0est}
\begin{split}
\|u_0(t)\|_{L^{n}(\mathbb{H}^n)} &\leq C(n)e^{-t\beta_1(n,n,n)}\|a\|_{L^{n}(\mathbb{H}^n)} \\
&\leq C(n)e^{-t\beta'}\|a\|_{L^{n}(\mathbb{H}^n)}.
\end{split}
\end{equation}

Next, if $T$ is chosen so that \eqref{estrequire} holds, then \eqref{deltaest} and \eqref{M0global} imply
\[
\|u(t)\|_{L^{n/\delta}(\mathbb{H}^n)} < Mt^{-\big(\frac{1}{2} - \frac{\delta}{2}\big)}e^{-t\beta'}, \quad \text{for } 0 \leq t < T.
\]
Hence, using \eqref{Gest} with $\alpha  = \delta$ for a fixed $0 < \delta < 1$ and $\gamma = \zeta= 1$, it follows that
\begin{equation} \label{Lnest} 
\begin{split} 
\|Gu(t)\|_{L^n(\mathbb{H}^n)} &\leq C(n,\delta)\int_0^t (t-s)^{-\frac{(\delta+1)}{2}}e^{-(t-s)\beta_3\big(n,\frac{n}{\delta+1},n\big)} \|u(s)\|_{L^{n/\delta}(\mathbb{H}^n)}\|u(s)\|_{L^{n}(\mathbb{H}^n)}\,ds \\
& \leq C(n,\delta)M\int_0^t (t-s)^{-\frac{(\delta+1)}{2}}e^{-(t-s)\beta'}s^{-\frac{(1-\delta)}{2}}e^{-s\beta' } \|u(s)\|_{L^{n}(\mathbb{H}^n)}\,ds \\
&=C(n,\delta)Me^{-t\beta'}\int_0^t (t-s)^{-\frac{(\delta+1)}{2}}s^{-\frac{(1-\delta)}{2}}\|u(s)\|_{L^{n}(\mathbb{H}^n)}\,ds.
\end{split}
\end{equation}
By combining this estimate with \eqref{q=nu0est} and using that $u(t) = u_0(t) + Gu(t)$, we have shown that
\begin{equation}
e^{t\beta'}\|u(t)\|_{L^n(\mathbb{H}^n)} \leq C(n)\|a\|_{L^{n}(\mathbb{H}^n)} + C(n,\delta)M\int_0^t (t-s)^{-\frac{(\delta+1)}{2}}s^{-\frac{(1-\delta)}{2}} \|u(s)\|_{L^{n}(\mathbb{H}^n)}\,ds,
\end{equation}
so that by Gr{\"o}nwall's inequality,
\begin{equation}
\begin{split}
e^{t\beta'}\|u(t)\|_{L^n(\mathbb{H}^n)} &\leq C(n)\|a\|_{L^{n}(\mathbb{H}^n)}\text{exp}\bigg( C(n,\delta)M\int_0^t (t-s)^{-\frac{(\delta+1)}{2}}s^{-\frac{(1-\delta)}{2}} \,ds\bigg) \\
&\leq C(n)\|a\|_{L^{n}(\mathbb{H}^n)}\text{exp}\bigg( C(n,\delta)MB\bigg(\frac{\delta+1}{2}, \frac{1-\delta}{2}\bigg)\bigg) \\
\end{split}
\end{equation}
where we have made the substitution $\tau = s/t$. This shows
\begin{equation} \label{mmm}
\sup_{0 \leq t < T}e^{t\beta'}\|u(t)\|_{L^n(\mathbb{H}^n)} \leq C(n)\|a\|_{L^{n}(\mathbb{H}^n)}\text{exp}\bigg( C(n,\delta)MB\bigg(\frac{\delta+1}{2}, \frac{1-\delta}{2}\bigg)\bigg),
\end{equation}
so that $e^{t\beta'}u \in L^{\infty}([0,T), L^n(\mathbb{H}^n))$. 

For the global estimate,  if $\|a\|_{L^{n}(\mathbb{H}^n)}$ satisfies the smallness condition \eqref{asmall}, then by the arguments outlined in Section \ref{mainexist},
\[
\|u(t)\|_{L^{n/\delta}(\mathbb{H}^n)} < Mt^{-\big(\frac{1}{2} - \frac{\delta}{2}\big)}e^{-t\beta'}, \quad \text{for } 0 \leq t < \infty,
\]
and the same computations just performed for the local estimate show $e^{t\beta'}u \in L^{\infty}([0,\infty), L^n(\mathbb{H}^n))$.

\subsubsection{Uniqueness and continuous dependence on initial data}
For the case $q=n$, the proofs for uniqueness and continuous dependence on initial data are analogous to the proofs for the case $q > n$.



\subsection{Estimates for $\nabla u$} It remains to prove \eqref{mainDspace} holds for $n \leq q < \infty$ and for this, we first choose $0< \delta < 1$ such that
\begin{equation} \label{qbdd2}
n \leq q < \frac{n}{1-\delta}.
\end{equation}
Note that if $q=n$, then any $0 <\delta < 1$ will work, whereas if $q >n$, we can choose $0<\varepsilon < \frac{n}{q}$ and define
\[
\delta = 1+\varepsilon - \frac{n}{q}.
\]
Then $0<\delta < 1$ and
\[
q = \frac{n}{1-\delta+\varepsilon} < \frac{n}{1-\delta}.
\]

With this $\delta$ chosen, we will show the following: 
\begin{equation}
t^{1-\frac{n}{2q}}e^{t\beta''}\nabla u  \in BC\big([0,T), L^q(\mathbb{H}^n)\big),
\end{equation}
where $\beta''>0$ is defined by
\begin{equation} \label{3rdbeta}
\beta'' = \min\bigg\{\beta, \beta_3(n,n,q), \beta_3\bigg(n,\frac{n}{n/q+\delta},q\bigg) \bigg\},
\end{equation}
with $\beta$ given by \eqref{1stbeta}.

To estimate the derivative of the free solution $u_0(t)$, we take $p=n$ in smoothing estimate \eqref{smest2} to get 
\begin{equation} \label{LqD0}
\begin{split}
\|\nabla u_0(t)\|_{L^q(\mathbb{H}^n)} &\leq C(n,q)t^{-\frac{n}{2}\big(\frac{2}{n}-\frac{1}{q}\big)}e^{-t\beta_3(n,n,q)} \|a\|_{L^n(\mathbb{H}^n)} \\
&\leq C(n,q)t^{-1+\frac{n}{2q}}e^{-t\beta''}\|a\|_{L^n(\mathbb{H}^n)}.
\end{split}
\end{equation}

Next, if $T>0$ is chosen so that \eqref{estrequire} holds, then \eqref{deltaest}, \eqref{M0global}, and the definition of $\beta''$ imply for our chosen $\delta$ that
\begin{equation} \label{rrr}
\|u(t)\|_{L^{n/\delta}(\mathbb{H}^n)} \leq Mt^{-(1-\delta)/2}e^{-t\beta''}, \quad 0 \leq t < T.
\end{equation}
Using this fact and taking $\gamma = \zeta =n/q$ and $\alpha = \delta$ in \eqref{DGest}, it follows that
\begin{equation} \label{nDdeltacomp1}
\begin{split}
\|\nabla Gu(t)\|_{L^{q}(\mathbb{H}^n)} &\leq C(n,q,\delta) \int_0^t (t-s)^{-(1+\delta)/2}e^{-(t-s)\beta_3\big(n,\frac{n}{n/q+\delta},q\big) } \|u(s)\|_{L^{n/\delta}(\mathbb{H}^n)}\|\nabla u(s)\|_{L^{q}(\mathbb{H}^n)}\,ds \\
& \leq C(n,q,\delta)M\int_0^t(t-s)^{-(\delta+1)/2}e^{-(t-s)\beta''}s^{-(1-\delta)/2}e^{-s\beta''}\|\nabla u(s)\|_{L^{q}(\mathbb{H}^n)}\,ds \\
& = C(n,q,\delta)Me^{-t\beta''}\int_0^t(t-s)^{-(\delta+1)/2}s^{-(1-\delta)/2}\|\nabla u(s)\|_{L^{q}(\mathbb{H}^n)}\,ds.
\end{split}
\end{equation}
Applying this estimate together with \eqref{LqD0} and using that $\nabla u(t) = \nabla u_0(t) + \nabla Gu(t)$, we have
\begin{equation}
t^{1-\frac{n}{2q}}e^{t\beta''}\|\nabla u(t)\|_{L^{q}(\mathbb{H}^n)} \leq C(n,q) \|a\|_{L^n(\mathbb{H}^n)}+C(n,q,\delta)M\int_0^tg(t,s)s^{1-\frac{n}{2q}}e^{s\beta''}\|\nabla u(s)\|_{L^{q}(\mathbb{H}^n)}\,ds,
\end{equation}
where $g(t,s) = (t-s)^{-(\delta+1)/2}s^{-(1-\delta)/2}t^{1-\frac{n}{2q}}s^{-1+\frac{n}{2q}}e^{-s\beta''}$. By Gr{\"o}nwall's inequality, 
\begin{equation}
\begin{split}
t^{1-\frac{n}{2q}}e^{t\beta''}\|\nabla u(t)\|_{L^{q}(\mathbb{H}^n)}&\leq C(n,q) \|a\|_{L^n(\mathbb{H}^n)}\text{exp}\bigg(C(n,q,\delta)M\int_0^tg(t,s)\,ds \bigg) \\
&\leq C(n,q) \|a\|_{L^n(\mathbb{H}^n)}\text{exp}\bigg(C(n,q,\delta)Mt^{1-\frac{n}{2q}}\int_0^t(t-s)^{-(\delta+1)/2}s^{-\frac{(1-\delta)}{2}}s^{-1+\frac{n}{2q}}\,ds \bigg) \\
&= C(n,q) \|a\|_{L^n(\mathbb{H}^n)}\text{exp}\bigg(C(n,q,\delta)Mt^{1-\frac{n}{2q}}\int_0^t(t-s)^{-(\delta+1)/2}s^{\big(\frac{\delta}{2}+\frac{n}{2q} - \frac{1}{2}\big) - 1}\,ds \bigg) \\
&= C(n,q) \|a\|_{L^n(\mathbb{H}^n)}\text{exp}\bigg(C(n,q,\delta)Mt^{1-\frac{n}{2q}}t^{-1+\frac{n}{2q}}B\bigg(\frac{\delta}{2}+\frac{n}{2q}-\frac{1}{2}, \frac{1-\delta}{2}\bigg) \bigg) \\
&= C(n,q) \|a\|_{L^n(\mathbb{H}^n)}\text{exp}\bigg(C(n,q,\delta)MB\bigg(\frac{\delta}{2}+\frac{n}{2q}-\frac{1}{2}, \frac{1-\delta}{2}\bigg) \bigg). 
\end{split}
\end{equation}
As well, the assumption \eqref{qbdd2} implies
\[
\frac{\delta}{2}+\frac{n}{2q}-\frac{1}{2}  >0,
\]
so that $B\big(\frac{\delta}{2}+\frac{n}{2q}-\frac{1}{2}, \frac{1-\delta}{2}\big)$ is finite.

We conclude that 
\begin{equation}
\sup_{0\leq t < T} t^{1-\frac{n}{2q}}e^{t\beta''}\|\nabla u(t)\|_{L^{q}(\mathbb{H}^n)} \leq C(n,q) \|a\|_{L^n(\mathbb{H}^n)}\text{exp}\bigg(C(n,q,\delta)MB\bigg(\frac{\delta}{2}+\frac{n}{2q}-\frac{1}{2}, \frac{1-\delta}{2}\bigg) \bigg),
\end{equation}
so that $t^{1-\frac{1}{2q}}e^{t\beta''} \nabla u \in L^{\infty}([0,T), L^{q}(\mathbb{H}^n))$.

Regarding the global estimate for $\nabla u$, as argued in previous sections, if $\|a\|_{L^{n}(\mathbb{H}^n)}$ satisfies the smallness condition \eqref{asmall}, then the estimate \eqref{rrr} holds for all $0 \leq t < \infty$, and the same computations presented above for the local estimate on $\nabla u$ imply $t^{1-\frac{1}{2q}}e^{t\beta''} \nabla u \in L^{\infty}([0,\infty), L^{q}(\mathbb{H}^n))$.


 
\section{Proof of Theorem \ref{PSL}} \label{PSLpf}  We now show the solution $u$ found in the previous section satisfies Theorem \ref{PSL}. Following closely the methods of Giga in \cite{giga_semilinear}, the first step is to show by induction that for some $T_1>0$ to be chosen later, the sequence \eqref{iterate} satisfies
\begin{equation} \label{mixinduc}
u_k \in L^{r}\big((0,T_1), L^q(\mathbb{H}^n)\big) \quad\quad \text{with } \quad \frac{1}{r} = \frac{1}{2} - \frac{n}{2q}, \quad n < q < \frac{n^2}{n-2}.
\end{equation}
For the case $k=0$, fix $r$ and $n< q$ such that 
\begin{equation} \label{u0mixest}
\frac{1}{r} = \bigg(\frac{1}{n} - \frac{1}{q}\bigg) \frac{n}{2}
\end{equation}
and define a map $U$ from $L^q(\mathbb{H}^n)$ to functions on $(0,T_1)$ by
\[
Uf = \|e^{tL}f\|_{L^q(\mathbb{H}^n)}.
\]
Let $\tilde{n} = n- \varepsilon$, where $\varepsilon >0$ is to be chosen below and such that $n-\varepsilon >0$.  As well, define $\tilde{r}$ by 
\begin{equation} \label{u0mixest1}
\frac{1}{\tilde{r}} = \bigg(\frac{1}{\tilde{n}} - \frac{1}{q}\bigg) \frac{n}{2}.
\end{equation}
We claim that $U$ is of weak type $(\tilde{n},\tilde{r})$. To see this, we must show
\begin{equation} \label{measure}
m\{\tau : |Uf(\tau)| > t\} \leq \bigg(\frac{C\|f\|_{L^{\tilde{n}}(\mathbb{H}^n)}}{t}\bigg)^{\tilde{r}}, \quad \text{for every } t>0.
\end{equation}
Let $\tau \in \{\tau : |Uf(\tau)| > t\}$. Then by \eqref{dispest} with $\tilde{n}=p$, we have
\begin{align*}
t &< |Uf(\tau)| \\
&=\|e^{\tau L}f\|_{L^q(\mathbb{H}^n)} \\
&\leq C(n, \varepsilon,q) \tau^{-\big(\frac{1}{\tilde{n}}- \frac{1}{q}\big)\frac{n}{2}}\|f\|_{L^{\tilde{n}}(\mathbb{H}^n)} \\
&=C(n,\varepsilon,q) \tau^{-\frac{1}{\tilde{r}}} \|f\|_{L^{\tilde{n}}(\mathbb{H}^n)},
\end{align*}
which gives
\[
\tau < \bigg(\frac{C(n,\varepsilon,q)\|f\|_{L^{\tilde{n}}(\mathbb{H}^n)}}{t}\bigg)^{\tilde{r}}.
\]
From this it follows that
\[
\{\tau : |Uf(\tau)| > t\} \subset \bigg[0, \bigg(\frac{C(n,\varepsilon, q)\|f\|_{L^{\tilde{n}}(\mathbb{H}^n)}}{t}\bigg)^{\tilde{r}}\bigg]
\]
and thus \eqref{measure} is verified. We also get that $U$ is of weak type $(q, \infty)$ by taking $p=q$ in \eqref{dispest}, which gives 
\begin{align*}
|Uf(t)| &= \|e^{tL}f\|_{L^q(\mathbb{H}^n)} \\
& \leq C(n,q) \|f\|_{L^q(\mathbb{H}^n)}. 
\end{align*}

Next we note that if $ q < \dfrac{n^2}{n-2}$, then $0 < n - \dfrac{q(n-2)}{n}$. Thus if we choose $\varepsilon$ so that
\[
\varepsilon < n - \dfrac{q(n-2)}{n},
\]
then $n - \varepsilon > \dfrac{q(n-2)}{n} \geq0$, and it follows that $\tilde{n} < \tilde{r}$. Having shown that $U$ is of weak type $(\tilde{n},\tilde{r})$ and $(q, \infty)$, noting that $U$ is subadditive by Minkowski's integral inequality, and using that $\tilde{n} < n< q$ and $\tilde{n} < \tilde{r}$, we can then apply the Marcinkiewicz interpolation theorem as follows (see, for example, \cite{Stein}): For $0 < \theta < 1$, $U$ is of strong-type $(n_1,r_1)$, where
\begin{equation} \label{interpineq}
\frac{1}{n_1} = \frac{1-\theta}{\tilde{n}}+\frac{\theta}{q}, \quad \frac{1}{r_1}=\frac{1-\theta}{\tilde{r}}.
\end{equation}
Solving for $(1-\theta)$ in the first equation gives
\begin{equation} \label{1-theta}
1-\theta = \bigg(\frac{1}{q}-\frac{1}{n_1}\bigg)\bigg(\frac{1}{q}-\frac{1}{\tilde{n}}\bigg)^{-1},
\end{equation}
and combining this with the fact that
\[
\frac{1}{\tilde{r}} = \bigg(\frac{1}{\tilde{n}}-\frac{1}{q}\bigg)\frac{n}{2},
\]
the second equation in \eqref{interpineq} can be rewritten as
\begin{equation} \label{interp}
\frac{1}{r_1} = \bigg(\frac{1}{n_1} - \frac{1}{q}\bigg)\frac{n}{2}. 
\end{equation}
By definition, the pair $(n,r)$ satisfies \eqref{interp}. Moreoever, since $\tilde{n} < n < q$ is assumed and since
\begin{equation}
\frac{1}{\tilde{r}} = \bigg(\frac{1}{n-\varepsilon}-\frac{1}{q}\bigg)\frac{n}{2} > \bigg(\frac{1}{n}-\frac{1}{q}\bigg)\frac{n}{2} = \frac{1}{r},
\end{equation}
we have $\tilde{r} < r < \infty$. From this we conclude $U$ is of strong type $(n,r)$. 

With this fact estblished, we can then estimate the $L^{r}\big((0,T_1), L^q(\mathbb{H}^n)\big)$ norm of $u_0$ in the following way:
\begin{equation} \label{u0a}
\begin{split}
\|u_0(t)\|_{L^{r}((0,T_1), L^q(\mathbb{H}^n))} &= \bigg[\int_0^{T_1}\|e^{tL}a\|_{L^q(\mathbb{H}^n)}^r\,dt\bigg]^{1/r} \\
&= \|Ua\|_{L^r((0,T_1))} \\
&\leq C(n,q)\|a\|_{L^n(\mathbb{H}^n)},
\end{split}
\end{equation}
which gives $u_0 \in L^{r}\big((0,T_1), L^q(\mathbb{H}^n)\big)$. 

Assuming now that \eqref{mixinduc} is true for $k$, we prove it for for $k+1$. To do this, we will use the following simple corollary to the one-dimensional form of the Hardy-Littlewood-Sobolev lemma (see \cite{Stein}, for example).

\begin{lem} \label{HLSmod} 
Let $ 0 < T_1$, $0 < \eta < 1$, and $1 < \lambda < \mu < \infty$ such that
\begin{equation} \label{HLSineq}
\frac{1}{\mu} = \frac{1}{\lambda} - \eta
\end{equation}
and define $I_{\eta}$ by
\begin{equation} \label{singINT}
I_{\eta}(f)(x) = C(\eta) \int_0^{T_1} \frac{f(s)}{|t-s|^{1-\eta}}\,ds.
\end{equation}
Then
\begin{equation} \label{HLS}
\|I_{\eta}(f)\|_{L^{\mu}((0,T_1))} \leq C(\mu, \lambda) \|f\|_{L^{\lambda}((0,T_1))}. 
\end{equation}
\end{lem}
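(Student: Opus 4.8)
The plan is to deduce this directly from the classical one-dimensional Hardy--Littlewood--Sobolev fractional integration theorem by an extension-by-zero argument. Recall that the one-dimensional HLS inequality states that the Riesz potential $\mathcal{I}_\eta g(t) = \int_{\R} |t-s|^{-(1-\eta)}\, g(s)\,ds$ satisfies $\|\mathcal{I}_\eta g\|_{L^{\mu}(\R)} \leq C(\mu,\lambda)\|g\|_{L^{\lambda}(\R)}$ whenever $0 < \eta < 1$, $1 < \lambda < \mu < \infty$, and $\frac{1}{\mu} = \frac{1}{\lambda} - \eta$ (see, e.g., \cite{Stein}). Note that these are exactly the admissibility conditions appearing in the hypotheses of the lemma, and that $\lambda < \mu$ is automatic from $\eta > 0$.

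First I would, given $f \in L^{\lambda}((0,T_1))$, define its extension $\tilde f$ to all of $\R$ by setting $\tilde f = f$ on $(0,T_1)$ and $\tilde f = 0$ on $\R \setminus (0,T_1)$, so that $\|\tilde f\|_{L^{\lambda}(\R)} = \|f\|_{L^{\lambda}((0,T_1))}$. Then I would observe that for every $t \in (0,T_1)$,
\[
I_\eta(f)(t) = C(\eta)\int_0^{T_1} \frac{f(s)}{|t-s|^{1-\eta}}\,ds = C(\eta)\int_{\R} \frac{\tilde f(s)}{|t-s|^{1-\eta}}\,ds = C(\eta)\,\mathcal{I}_\eta \tilde f(t),
\]
since the integrand vanishes outside $(0,T_1)$.

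Finally I would apply the one-dimensional HLS inequality to $\tilde f$ and restrict the resulting estimate from $\R$ back to the subinterval $(0,T_1)$:
\[
\|I_\eta(f)\|_{L^{\mu}((0,T_1))} = C(\eta)\,\|\mathcal{I}_\eta \tilde f\|_{L^{\mu}((0,T_1))} \leq C(\eta)\,\|\mathcal{I}_\eta \tilde f\|_{L^{\mu}(\R)} \leq C(\eta)C(\mu,\lambda)\,\|\tilde f\|_{L^{\lambda}(\R)} = C(\mu,\lambda)\,\|f\|_{L^{\lambda}((0,T_1))},
\]
where in the last step the constant $C(\eta)$ is absorbed into $C(\mu,\lambda)$, which is legitimate since $\eta$ is determined by $\mu$ and $\lambda$ through $\frac{1}{\mu} = \frac{1}{\lambda} - \eta$. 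This completes the argument.

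Since the statement is a direct specialization of a standard theorem, there is essentially no serious obstacle; the only points requiring a line of care are verifying that the kernel $|t-s|^{-(1-\eta)}$ is exactly the one-dimensional Riesz kernel of order $\eta$, that the exponent relation in the hypothesis is precisely the HLS scaling relation in dimension one, and that passing from $\R$ to the subinterval $(0,T_1)$ only shrinks the $L^p$ norms on both sides. If a self-contained proof were preferred, one could instead split $\int_0^{T_1}$ into the regions $|t-s| \leq \rho$ and $|t-s| > \rho$, bound the first by a constant times $\rho^{\eta}$ times the Hardy--Littlewood maximal function of $f$ and the second by H\"older's inequality, optimize in $\rho$, and invoke the $L^{\lambda}$-boundedness of the maximal function together with Marcinkiewicz interpolation (as already used earlier in the paper); but the reduction to HLS above is cleaner.
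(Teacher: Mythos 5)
Your proposal is correct and follows essentially the same route as the paper: extend $f$ by zero to $\R$ (the paper writes this with the indicator $\chi_{[0,T_1]}$), apply the classical one-dimensional Hardy--Littlewood--Sobolev inequality, and restrict back to $(0,T_1)$. Your write-up is, if anything, slightly more careful than the paper's in explicitly noting that the restriction only shrinks the $L^{\mu}$ norm and that $C(\eta)$ may be absorbed into $C(\mu,\lambda)$.
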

\begin{proof}
Letting $\chi_{[0,T_1]}$ denote the indicator function for the interval $[0, T_1]$, we have by the Hardy-Littlewood-Sobolev lemma that
\begin{equation}
\begin{split}
\|I_{\eta}(f)\|_{L^{\mu}((0,T_1))} &= \bigg\|C(\eta)\int_{\R}\frac{f(s)\chi_{[0,T_1]}(s)}{|t-s|^{1-\eta}}\,ds. \bigg\|_{L^{\mu}(\R)}\\
& \leq C(\mu, \lambda) \|f\chi_{[0,T_1]}\|_{L^{\lambda}(\R)} \\
&=C(\mu, \lambda) \|f\|_{L^{\lambda}((0,T_1))}.
\end{split}
\end{equation}
\end{proof}
\noindent To apply this lemma, we take $\alpha= \zeta = \gamma = n/q$ in \eqref{Gest} to get
\begin{equation}
\begin{split}
\|Gu_k(t)\|_{L^q(\mathbb{H}^n)} &\leq C(n,q)\int_0^t (t-s)^{-\big(\frac{1}{2}+\frac{n}{2q}\big)}\big(\|u_k(s)\|_{L^q(\mathbb{H}^n)}\big)^2\,ds \\
&\leq \frac{C(n,q)C(1/2-n/2q)}{C(1/2-n/2q)} \int_{0}^{T_1} \frac{\big(\|u_k(s)\|_{L^q(\mathbb{H}^n)}\big)^2}{(t-s)^{1-\big(\frac{1}{2}-\frac{n}{2q}\big)}}\, ds \\
&= C(n,q) I_{(1/2-n/2q)}\big(\|u_k(\cdot)\|_{L^q(\mathbb{H}^n)}^2\big)(t).
\end{split}
\end{equation}
Taking the preceding inequality and applying the $L^{r}((0, T_1))$ norm as well as \eqref{HLS}, we get
\begin{equation} \label{Gupartial}
\begin{split}
\|Gu_k\|_{L^r((0, T_1), L^q(\mathbb{H}^n))} &\leq C(n,q) \big\|I_{(1/2-n/2q)}\big(\|u_k(\cdot)\|_{L^q(\mathbb{H}^n)}^2\big)\big\|_{L^r((0,T_1))} \\
&\leq C(n,q) \big\| \|u_k(\cdot)\|_{L^q(\mathbb{H}^n)}^2\big\|_{L^{\lambda}((0,T_1))},
\end{split}
\end{equation}
where
\begin{equation}
\frac{1}{\lambda} = \frac{1}{r}+\bigg(\frac{1}{2}-\frac{n}{2q}\bigg) = \frac{2}{r}
\end{equation}
by \eqref{u0mixest}. Thus \eqref{Gupartial} becomes 
\begin{equation} 
\begin{split}
\|Gu_k\|_{L^r((0, T_1), L^q(\mathbb{H}^n))} &\leq C(n,q) \big\| \|u_k(\cdot)\|_{L^q(\mathbb{H}^n)}^2\big\|_{L^{r/2}((0,T_1))} \\
&=C(n,q)\|u_k\|_{L^r((0, T_1), L^q(\mathbb{H}^n))}^2.
\end{split}
\end{equation}
Using \eqref{iterate}, we so far have shown
\begin{equation}
\|u_{k+1}\|_{L^r((0, T_1), L^q(\mathbb{H}^n))}  \leq \|u_0\|_{L^r((0, T_1), L^q(\mathbb{H}^n))} +C(n,q)\|u_k\|_{L^r((0, T_1), L^q(\mathbb{H}^n))}^2,
\end{equation}
which is structurally the same recurrence inequality as \eqref{kplus1}. This has been shown to define a bounded sequence as long as
\begin{equation} \label{u0reqest}
\|u_0\|_{L^r((0, T_1), L^q(\mathbb{H}^n))} < \frac{1}{4C(n,q)},
\end{equation}
and since $\|u_0\|_{L^r((0, T_1), L^q(\mathbb{H}^n))}$ is bounded as shown in \eqref{u0a}, we can choose $T_1\leq T$ sufficiently small so that \eqref{u0reqest} holds. As well, since our sequence $u_k \in {L^r((0, T_1), L^q(\mathbb{H}^n))}$ and since $u_k$ converges to $u$ in $L^{\infty}([0,T), L^q(\mathbb{H}^n))$ by \eqref{PFthm}, we conclude $u \in {L^r((0, T_1), L^q(\mathbb{H}^n))}$.

As for the global in time result, by \eqref{u0a} it follows that if 
\begin{equation}
\|a\|_{L^n(\mathbb{H}^n)} < \frac{1}{4C(n,q)},
\end{equation}
where this constant $C(n,q)$ is the product of the constants appearing in \eqref{Gupartial} and \eqref{u0a}, respectively, then 
\begin{equation}
\|u_0\|_{L^r((0, T_1), L^q(\mathbb{H}^n))} < \frac{1}{4C(n,q)},
\end{equation}
for any choice of $T_1> 0$ and in this case, we conclude $u \in {L^r((0, \infty), L^q(\mathbb{H}^n))}$.


\section{Proof of Theorem \ref{newLrLq}} \label{newLrLqpf} Suppose $u$ is the mild solution from Theorem \ref{EUthm}, with corresponding time of existence $T>0$. Then for $n < q$ and $0 \leq t < T$,
\begin{equation} \label{LrLq1}
\|u(t)\|_{L^q(\mathbb{H}^n)} \leq Mt^{-\big(\frac{1}{2} - \frac{n}{2q}\big)}e^{-t\beta},
\end{equation}
where $M$ is defined by \eqref{M0global} (with $\delta = n/q$), and where $\beta >0$ is defined by \eqref{1stbeta}. Thus for $1 \leq r < \infty$ such that
\be \label{newPSL2}
\frac{1}{2} - \frac{n}{2q} < \frac{1}{r},
\ee
and for $0 < T \leq 1$, we get that
\be
\begin{split}
\|u\|^r_{L^r((0,T), L^q(\H^n))} &= \int_0^T \|u(t)\|^r_{L^q(\H^n)}\, dt \\
&\leq C(M,r) \int_0^1 t^{-r\big(\frac{1}{2} - \frac{n}{2q}\big)}e^{-tr\beta}\,dt \\
&\leq C(M, r) \int_0^1 t^{-r\big(\frac{1}{2} - \frac{n}{2q}\big)}\,dt \\
& \leq C(M,n,q,r).
\end{split}
\ee
This shows Theorem \ref{newLrLq}, at least locally, and the same proof would work for $\R^n$ as well.

However, for the case of large time of existence $T >1$, we exploit the exponential decay present in \eqref{LrLq1}, which is not available in the Euclidean setting, and which allows us to gain uniform control for $1 < t < \infty$. Specifically, we have
\be
\begin{split}
\|u\|^r_{L^r((0,T), L^q(\H^n))} &= \int_0^T \|u(t)\|^r_{L^q(\H^n)}\, dt \\
&= \int_0^1 \|u(t)\|^r_{L^q(\H^n)}\, dt + \int_1^T \|u(t)\|^r_{L^q(\H^n)}\, dt \\
&\leq C\big(M,r)\bigg( \int_0^1 t^{-r\big(\frac{1}{2} - \frac{n}{2q}\big)}\,dt +\int_1^T e^{-tr\beta}\,dt\bigg) \\
&\leq C\big(M,r)\bigg( \int_0^1 t^{-r\big(\frac{1}{2} - \frac{n}{2q}\big)}\,dt +\int_1^T e^{-tr\beta}\,dt\bigg) \\
& = C(M,n,q,r,\beta),
\end{split}
\ee
where in the last step, we have used that
\be \label{unfmest}
\begin{split}
\int_1^T e^{-tr\beta}\, dt &= C(r, \beta) (e^{-r\beta} - e^{-Tr\beta}) \\
& \leq C(r, \beta) (e^{-r\beta}).
\end{split}
\ee
Therefore, for the case when $\|a\|_{L^n(\H^n)}$ is small, so that $u$ is global and estimate \eqref{LrLq1} holds for all $0 \leq t < \infty$, 
\be
\|u\|_{L^r((0,\infty), L^q(\H^n))} \leq C(M,n,q,r,\beta),
\ee
and as we have shown in \eqref{unfmest}, this bound is uniform in time. Hence $u \in L^r((0,\infty), L^q(\H^n))$.

\section{Proof of Theorem \ref{LrLn}} \label{LrLnpf} As in the previous section, we suppose $u$ is the mild solution from Theorem \ref{EUthm}, with corresponding time of existence $T>0$. Then for $0 \leq t < T$,
\begin{equation} \label{LrLn1}
\|u(t)\|_{L^n(\mathbb{H}^n)} \leq C\big(n, \delta, \|a\|_{L^n(\mathbb{H}^n)}\big)e^{-t\beta'},
\end{equation}
where for a fixed $0 < \delta < 1$, $C(n, \delta, \|a\|_{L^n(\mathbb{H}^n)})$ is the constant appearing in \eqref{mmm}, and $\beta' >0$ is defined by \eqref{2ndbeta}. With this, we can estimate the $L^r((0,\infty), L^n(\H^n))$ norm of $u$ for $1 \leq r < \infty$ as follows:
\be
\begin{split}
\|u\|^r_{L^r((0,T), L^q(\H^n))} &\leq C\big(n, r, \delta, \|a\|_{L^n(\mathbb{H}^n)}\big) \int_0^Te^{-tr\beta'}\,dt \\
&= C\big(n, r, \beta', \delta, \|a\|_{L^n(\mathbb{H}^n)}\big)(1 - e^{-Tr\beta'}) \\
&\leq C\big(n, r, \beta', \delta, \|a\|_{L^n(\mathbb{H}^n)}\big).
\end{split}
\ee
Thus $u \in L^r((0,T), L^n(\H^n))$. Moreover, since this estimate is uniform in time, if $\|a\|_{L^n(\H^n)}$ is small, so that the solution $u$ is global, and so that estimate \eqref{LrLn1} holds for all $0 \leq t < \infty$, then $u \in L^r((0,\infty), L^n(\H^n))$.

\section{Proof of Theorem \ref{tmdcy1}} \label{tmdcy1pf}
In the situation from Theorem \ref{EUthm} where the solution $u$ is global, we have the following estimate:
\begin{equation}
\|u(t)\|_{L^n(\mathbb{H}^n)} \leq C\big(n, \delta, \|a\|_{L^n(\mathbb{H}^n)}\big)e^{-t\beta'},
\end{equation}
where for a fixed $0 < \delta < 1$, $C(n, \delta, \|a\|_{L^n(\mathbb{H}^n)})$ is the constant appearing in \eqref{mmm}, and $\beta' >0$ is defined by \eqref{2ndbeta}. Therefore,
\begin{equation}
\begin{split}
\lim_{T \to \infty} \frac{1}{T}\int_0^T \|u(t)\|_{L^n(\mathbb{H}^n)}\, dt & \leq \lim_{T \to \infty} \frac{C\big(n, \delta, \|a\|_{L^n(\mathbb{H}^n)}\big)}{T} \int_0^T e^{-\beta' t}\, dt\\
&\leq \lim_{T \to \infty} \frac{C\big(n, \delta, \|a\|_{L^n(\mathbb{H}^n)}\big)}{\beta' T}\big(1-e^{-\beta' T}\big) \\
& = 0.
\end{split}
\end{equation}
This proves \eqref{tmint}.

The proof for \eqref{Lndecay} is even more straightforward. As above, we have 
\begin{equation}
\|u(t)\|_{L^n(\mathbb{H}^n)} \leq C\big(n, \delta, \|a\|_{L^n(\mathbb{H}^n)}\big)e^{-t\beta'},
\end{equation}
so that
\begin{equation}
\begin{split}
\lim_{t \to \infty}\|u(t)\|_{L^n(\mathbb{H}^n)} &\leq \lim_{t \to \infty}C\big(n, \delta, \|a\|_{L^n(\mathbb{H}^n)}\big)e^{-t\beta'} \\
&=0.
\end{split}
\end{equation}


\section{Proof of Theorem \ref{Lpthm}} \label{Lpthmpf} 
\subsection{$L^p$ estimates for $u$}
If $a \in L^n(\mathbb{H}^n) \cap L^p(\mathbb{H}^n)$ for $1 < p < n$, then a unique solution $u$ exists for some $T>0$ and such that $u$ satisfies \eqref{mainspace} with $q=n$. Thus to prove Theorem \ref{Lpthm}, we first choose $0 < \delta < 1$ such that $n/p+\delta < n$. With this $\delta$ chosen, we will show
\begin{equation}
e^{t\beta^*} u \in BC\big([0,T), L^p(\mathbb{H}^n) \cap L^n(\mathbb{H}^n)\big),
\end{equation}
where 
\begin{equation} \label{betastar}
\beta^* = \min\bigg\{\beta, \beta_1(n, p,p), \beta_3\bigg(n, \frac{n}{n/p+\delta},p\bigg) \bigg\},
\end{equation}
and where $\beta>0$ is defined by \eqref{1stbeta}.

To estimate the free solution $u_0(t)$, we set $q=p$ in \eqref{dispest}, which gives
\begin{equation} \label{u0lp}
\begin{split}
\|u_0(t)\|_{L^p(\mathbb{H}^n)} &\leq C(n,p)e^{-t\beta_1(n,p,p)} \|a\|_{L^p(\mathbb{H}^n)} \\
&\leq C(n,p)e^{-t\beta^*} \|a\|_{L^p(\mathbb{H}^n)} \\
\end{split}
\end{equation}

For estimating $Gu(t)$, we first observe that if $T$ is chosen so that \eqref{estrequire} holds, then for our chosen $0 < \delta < 1$ and any $0 \leq t < T$,
\begin{equation}
\|u(t)\|_{L^{n/\delta}(\mathbb{H}^n)} \leq M t^{-(1-\delta)/2}e^{-t\beta^*},
\end{equation}
where we have used that $e^{-t\beta} \leq e^{-t\beta^*}$. Applying \eqref{Gest} with $\alpha = \gamma = \frac{n}{p}$ and $\zeta = \delta$, and noting that our choice of $\delta$ is such that $n/p + \delta < n$, we have
\begin{equation} \label{Lnest} 
\begin{split} 
\|Gu(t)\|_{L^p(\mathbb{H}^n)} &\leq C(n,p,\delta)\int_0^t (t-s)^{-\frac{(\delta+1)}{2}}e^{-(t-s)\beta_3\big(n, \frac{n}{n/p+\delta},p\big)} \|u(s)\|_{L^{n/\delta}(\mathbb{H}^n)}\|u(s)\|_{L^{p}(\mathbb{H}^n)}\,ds \\
& \leq C(n,p,\delta)M\int_0^t (t-s)^{-\frac{(\delta+1)}{2}}e^{-(t-s)\beta^*}s^{-\frac{(1-\delta)}{2}}e^{-s\beta^*} \|u(s)\|_{L^{p}(\mathbb{H}^n)}\,ds \\
& = C(n,p,\delta)Me^{-t\beta^*}\int_0^t (t-s)^{-\frac{(\delta+1)}{2}}s^{-\frac{(1-\delta)}{2}} \|u(s)\|_{L^{p}(\mathbb{H}^n)}\,ds.
\end{split}
\end{equation}
Taking this estimate together with \eqref{u0lp}, using that $u(t) = u_0(t) + Gu(t)$, and multiplying through by $e^{t\beta^*}$, we have shown that
\begin{equation}
e^{t\beta^*}\|u(t)\|_{L^p(\mathbb{H}^n)} \leq C(n,p)\|a\|_{L^{n}(\mathbb{H}^n)} + C(n,p,\delta)M\int_0^t (t-s)^{-\frac{(\delta+1)}{2}}s^{-\frac{(1-\delta)}{2}}e^{-s\beta^*}e^{s\beta^*} \|u(s)\|_{L^{p}(\mathbb{H}^n)}\,ds,
\end{equation}
so that by Gr{\"o}nwall's inequality,
\begin{equation}
\begin{split}
e^{t\beta^*}\|u(t)\|_{L^p(\mathbb{H}^n)} &\leq C(n,p)\|a\|_{L^{n}(\mathbb{H}^n)}\text{exp}\bigg( C(n,p,\delta)M\int_0^t (t-s)^{-\frac{(\delta+1)}{2}}s^{-\frac{(1-\delta)}{2}}e^{-s\beta^*} \,ds\bigg) \\
&\leq C(n,p)\|a\|_{L^{n}(\mathbb{H}^n)}\text{exp}\bigg( C(n,p,\delta)MB\bigg(\frac{\delta+1}{2}, \frac{1-\delta}{2}\bigg)\bigg),
\end{split}
\end{equation}
where we have made the substitution $\tau = s/t$ to compute the Beta integral. This shows
\begin{equation}\label{nnn}
\sup_{0 \leq t < T}e^{t\beta^*}\|u(t)\|_{L^p(\mathbb{H}^n)} \leq C(n,p)\|a\|_{L^{n}(\mathbb{H}^n)}\text{exp}\bigg( C(n,p,\delta)MB\bigg(\frac{\delta+1}{2}, \frac{1-\delta}{2}\bigg)\bigg),
\end{equation}
so that $e^{t\beta^*}u \in L^{\infty}([0,T), L^p(\mathbb{H}^n))$. 

For the global result, if $\|a\|_{L^{n}(\mathbb{H}^n)}$ satisfies the smallness condition \eqref{asmall}, then as shown in the proof of \eqref{mainspace} from Theorem \ref{EUthm},
\begin{equation} 
\|u(t)\|_{L^{n/\delta}(\mathbb{H}^n)} < Mt^{-\big(\frac{1}{2} - \frac{\delta}{2}\big)}e^{-t\beta^*}, \quad \text{for } 0 \leq t < \infty,
\end{equation}
where we recall the definition of $M$ as 
\begin{equation}
M = \frac{1}{2C(n, \delta)},
\end{equation}
with $C(n, \delta)$ being the constant appearing in the recurrence inequality \eqref{kplus1}. Thus the steps used to derive estimate \eqref{nnn} are valid for $0 \leq t <\infty$ and we conclude $e^{t\beta^*}u \in L^{\infty}([0,\infty), L^p(\mathbb{H}^n))$, as desired.

\subsection{Continuous dependence on initial data} Let $a \in L^p(\mathbb{H}^n) \cap L^n(\mathbb{H}^n)$ and let $u$ be the corresponding solution from Theorem \ref{Lpthm} with time of existence $T$. As well, let $a' \in L^p(\mathbb{H}^n) \cap L^n(\mathbb{H}^n)$ and let $v$ be the corresponding solution from Theorem \ref{Lpthm} with time of existence $T'$. Let $\tilde{T} = \text{min}\{T, T'\}.$

Let $0 < \delta < 1$ be such that $n/p+\delta < n$. By \eqref{deltaest}, both $u$ and $v$ satisfy
\begin{align}
\sup_{0\leq t < \tilde{T}}t^{\frac{(1-\delta)}{2}}e^{t\beta^*}\|u\|_{L^{n/\delta}(\mathbb{H}^n)} &\leq \tilde{M}, \\
\sup_{0\leq t < \tilde{T}}t^{\frac{(1-\delta)}{2}}e^{t\beta^*}\|v\|_{L^{n/\delta}(\mathbb{H}^n)} &\leq \tilde{M},
\end{align}
where, letting $M_u$ denote the uniform bound on the sequence $\{u_k\}$ from Theorem \ref{EUthm} and $M_v$ that for $\{v_k\}$, we define $\tilde{M} = \text{max}\{M_u, M_v\}$. Using these facts and applying \eqref{Gdest} with $\alpha = \delta$ and $\zeta = \gamma = n/p$ as well as \eqref{dispest} with $p=q$, we have
\begin{equation}
\begin{split}
\|u(t) - v(t) \|_{L^p} &\leq \|e^{tL}(a-a')\|_{L^p}\\ &\quad + C(n, p, \delta)\int_0^t (t-s)^{-\frac{(\delta+1)}{2}}e^{-(t-s)\beta_3\big(n, \frac{n}{n/p+\delta},p\big)} (\|u\|_{L^{n/\delta}} +\|v\|_{L^{n/\delta}})\|u-v\|_{L^{p}}\,ds \\
& \leq C(n,p)e^{-t\beta^*}\|a-a'\|_{L^p}\\ &\quad + 2C(n, p, \delta)\tilde{M}\int_0^t (t-s)^{-\frac{(\delta+1)}{2}}e^{-(t-s)\beta^*}s^{-\frac{(1-\delta)}{2}}e^{-s\beta^*} \|u-v\|_{L^{p}}\,ds \\
& \leq C(n,p)e^{-t\beta^*}\|a-a'\|_{L^p}\\ &\quad + 2C(n, p, \delta)\tilde{M}e^{-t\beta^*}\int_0^t (t-s)^{-\frac{(\delta+1)}{2}}s^{-\frac{(1-\delta)}{2}}e^{-s\beta^*}e^{s\beta^*} \|u-v\|_{L^{p}}\,ds.
\end{split}
\end{equation}
Multiplying the above computation through by $e^{t\beta^*}$ and applying Gr{\"on}wall's inequality,
\begin{equation}
\begin{split}
e^{t\beta^*}\|u(t) - v(t) \|_{L^p(\mathbb{H}^n)} &\leq C(n,p)\|a-a'\|_{L^p(\mathbb{H}^n)}\text{exp}\bigg(2C(n, p, \delta)\tilde{M}\int_0^t (t-s)^{-\frac{(\delta+1)}{2}}s^{-\frac{(1-\delta)}{2}}e^{-s\beta^*}\,ds\bigg) \\
& =C(n,p)\|a-a'\|_{L^p(\mathbb{H}^n)}\text{exp}\bigg(2C(n, p, \delta)\tilde{M}B\bigg(\frac{\delta+1}{2}, \frac{1-\delta}{2}\bigg)\bigg) \\
& = C(n, p, \delta, \tilde{M})\|a-a'\|_{L^p(\mathbb{H}^n)}.
\end{split}
\end{equation}
Therefore, given $\varepsilon >0$, if
\[
\|a-a'\|_{L^p(\mathbb{H}^n)} < \frac{\varepsilon}{ C(n, p, \delta, \tilde{M})},
\]
then 
\[
e^{t\beta^*}\|u(t) - v(t) \|_{L^p(\mathbb{H}^n)} < \varepsilon.
\]
This shows continuous dependence on the initial data in the space $L^{\infty}\big([0,\tilde{T}), L^p(\mathbb{H}^n))$ and since continuous dependence on the initial data was already shown for $L^{\infty}\big([0,\tilde{T}), L^n(\mathbb{H}^n))$ in Theorem \ref{EUthm}, we conclude it holds in $L^{\infty}\big([0,\tilde{T}), L^p(\mathbb{H}^n) \cap L^n(\mathbb{H}^n) \big)$ as well.


\subsection{$L^p$ estimates for $\nabla u$} The proof for showing \eqref{lpDspace} proceeds almost exactly as in the proof for \eqref{mainDspace} in Theorem \ref{EUthm}, though here we choose $q=p$ in \eqref{smest2} to estimate $\nabla u_0$ and to estimate $\nabla Gu$, we apply \eqref{deltaest}, \eqref{M0global}, and take $\gamma = \zeta = n/p$ and $\alpha = \delta$ in \eqref{DGest}, where  $0 < \delta < 1$ is chosen so that $\alpha + \zeta = \delta + n/p < n$. The constant appearing in the exponential term in this case is
\begin{equation}
\beta^{**} = \min\bigg\{\beta, \beta_3(n, p,p), \beta_3\bigg(n, \frac{n}{n/p+\delta},p\bigg) \bigg\}.
\end{equation}


\section{Proof of Theorem \ref{LrLp}} \label{LrLppf} The proof of Theorem \ref{LrLp} is almost identical to that of Theorem \ref{LrLn} as presented in Section \ref{LrLnpf}, and may be omitted.


\section{Proof of Theorem \ref{tmdcy2}} \label{tmdcy2pf}

Assuming the solution $u$ and its derivative $\nabla u$ from Theorem \ref{Lpthm} are global, we first prove \eqref{td3} and in order to do this, we must prove a modified version of estimate \eqref{Gest} appearing in Lemma \ref{gtermest}. Using that $d^*u=0$ to write
\begin{equation}
Gu(t) = -\int_0^te^{-(t-s)L}\mathbb{P }\,\big(\nabla_{u^{\#}} u)(s)\,ds
\end{equation}
and letting $1 < r \leq q < \infty$, we have by our dispersive estimate \eqref{dispest}, the $L^r$ boundedness of $\mathbb{P}$ shown in Corollary \ref{Pbdd}, the pointwise estimate \eqref{covarest}, and H{\"o}lder's inequality that
\begin{equation}
\begin{split}
\|Gu(t)\|_{L^q(\mathbb{H}^n)} &\leq C(n,r, q) \int_0^t (t-s)^{-\frac{n}{2}\big(\frac{1}{r} - \frac{1}{q}\big)}e^{-(t-s)\beta_1(n,r,q)} \big\||\nabla u(s)| |u(s)| \big\|_{L^r(\mathbb{H}^n)} \,ds \\
&\leq C(n,\mu, \lambda, q) \int_0^t (t-s)^{-\frac{n}{2}\big(\frac{1}{\mu} +\frac{1}{\lambda} - \frac{1}{q}\big)}e^{-(t-s)\beta_1\big(n,\frac{\mu\lambda}{\mu+\lambda},q\big)} \|u(s) \|_{L^{\mu}(\mathbb{H}^n)}\|\nabla u(s) \|_{L^{\lambda}(\mathbb{H}^n)} \,ds,
\end{split}
\end{equation}
where $\mu, \lambda >0$ and $\frac{1}{\mu} +\frac{1}{\lambda} = \frac{1}{r}$. Then, setting
\[
q = \frac{n}{\gamma}, \quad \mu = \frac{n}{\alpha}, \quad \lambda = \frac{n}{\zeta},
\]
where $\gamma \leq \alpha + \zeta < n$, we get
\begin{equation} \label{GGest}
\|Gu(t)\|_{L^{n/\gamma}(\mathbb{H}^n)} \leq C(n, \alpha, \gamma, \zeta)\int_0^t (t-s)^{-\frac{\alpha +\zeta- \gamma}{2}}e^{-(t-s)\beta_1\big(n,\frac{n}{\alpha+\zeta},\frac{n}{\gamma}\big)} \|u(s) \|_{L^{n/\alpha}(\mathbb{H}^n)}\|\nabla u(s) \|_{L^{n/\zeta}(\mathbb{H}^n)} \,ds.
\end{equation}

With the estimate \eqref{GGest} established, we fix $q$ such that $p \leq q < \infty$ and first suppose 
\[
\frac{n}{2p}-\frac{n}{2q} < 1.
\]
Chosing $0 <\delta <1 $ such 
\begin{equation} \label{deltassump}
\frac{n}{p}-\frac{n}{q} +\delta< 2,
\end{equation}
we set
\begin{equation} \label{betatilde}
\tilde{\beta} = \min\bigg\{\beta, \beta^*,\beta'', \beta_1(n,p,q), \beta_1\bigg(n,\frac{n}{n/p+\delta},q\bigg)\bigg\}, 
\end{equation}
where for $0 < \delta' < 1$ chosen so that $1-\delta' < \delta$, and for $0<\delta^*<1$ chosen so that $n/p + \delta^* < n$, we let
\begin{align}
\beta &= \min\bigg\{ \beta_1\bigg(n,n,\frac{n}{\delta'}\bigg),  \beta_3\bigg(n,\frac{n}{2\delta'},\frac{n}{\delta'}\bigg)\bigg\}, \\
\beta'' &= \min\bigg\{\beta, \beta_3(n,n,q), \beta_3\bigg(n,\frac{n}{\delta+\delta'},\frac{n}{\delta'}\bigg) \bigg\}, \\
\beta^* &= \min\bigg\{\beta, \beta_1(n, p,p), \beta_3\bigg(n, \frac{n}{n/p+\delta^*},p\bigg) \bigg\}.
\end{align}

It follows by \eqref{dispest} and the definition of $\tilde{\beta}$ that
\begin{equation} \label{u0td3}
\begin{split}
t^{\frac{n}{2}\big(\frac{1}{p}- \frac{1}{q}\big)}e^{t\tilde{\beta}}\big\|u_0(t)\big\|_{L^q(\mathbb{H}^n)} &\leq t^{\frac{n}{2}\big(\frac{1}{p}- \frac{1}{q}\big)}e^{t \beta_1(n,p,q)}\big\|u_0(t)\big\|_{L^q(\mathbb{H}^n)} \\
& \leq C(n,p,q)\,\|a\|_{L^p(\mathbb{H}^n)},
\end{split}
\end{equation}
This proves the desired decay rate for $u_0$. 

For estimating $Gu$, observe that by taking $q = n/\delta$ in \eqref{mainDspace} from Theorem \ref{EUthm},
\begin{equation} \label{yyyy}
\|\nabla u(t)\|_{L^{n/\delta}(\mathbb{H}^n)} \leq C(n,\delta) t^{-1+\delta/2}e^{-t\beta''},
\end{equation}
and this estimate is justified, for in setting $q=n/\delta$, our assumption $1-\delta'<\delta$  is equivalent to the requirement $q < \frac{n}{1-\delta'}$.  

As well, by Theorem \ref{Lpthm} and our assumption that $u$ is global in time,
\begin{equation}
e^{t\tilde{\beta}}\|u(t)\|_{L^p(\mathbb{H}^n)}\leq e^{t\beta^*}\|u(t)\|_{L^p(\mathbb{H}^n)}
\end{equation} 
is bounded for $0\leq t <\infty$. Thus, setting 
\[
\alpha = \frac{n}{p}, \quad \gamma = \frac{n}{q}, \quad \zeta = \delta, 
\]
in \eqref{GGest}, we have 
\begin{equation} \label{Gtd3}
\begin{split}
\|Gu(t)\|_{L^q(\mathbb{H}^n)} &\leq C(n, p,q, \delta) \int_0^t (t-s)^{-\frac{1}{2}\big(\frac{n}{p} + \delta -\frac{n}{q}\big)}e^{-(t-s)\beta_1\big(n,\frac{n}{n/p+\delta},q\big)} \|u(s)\|_{L^{p}(\mathbb{H}^n)}\|\nabla u(s)\|_{L^{n/\delta}(\mathbb{H}^n)}\,ds\\
&\leq C(n, p,q, \delta,\delta',\delta^*) \int_0^t (t-s)^{-\frac{1}{2}\big(\frac{n}{p} + \delta -\frac{n}{q}\big)}e^{-(t-s)\tilde{\beta}}e^{-s\beta^*}s^{-1+\delta/2}e^{-s\beta''}\,ds \\
&\leq C(n, p,q, \delta,\delta',\delta^*) e^{-t\tilde{\beta}}\int_0^t (t-s)^{-\frac{1}{2}\big(\frac{n}{p} + \delta -\frac{n}{q}\big)}e^{s\tilde{\beta}}e^{-s\tilde{\beta}}s^{-1+\delta/2}e^{-s\tilde{\beta}}\,ds \\
&\leq C(n, p,q, \delta,\delta',\delta^*) e^{-t\tilde{\beta}}\int_0^t (t-s)^{-\frac{1}{2}\big(\frac{n}{p} + \delta -\frac{n}{q}\big)}s^{-1+\delta/2}\,ds \\
& \leq C(n, p,q, \delta,\delta',\delta^*)  t^{-\big(\frac{n}{2p}-\frac{n}{2q}\big)}e^{-t\tilde{\beta}} \int_0^1 (1-\tau)^{-\frac{1}{2}\big(\frac{n}{p} + \delta -\frac{n}{q}\big)}\tau^{\delta/2-1}\,ds \\
& \leq C(n, p,q, \delta,\delta',\delta^*)  t^{-\big(\frac{n}{2p}-\frac{n}{2q}\big)}e^{-t\tilde{\beta}} \int_0^1 (1-\tau)^{\big(1-\frac{1}{2}\big(\frac{n}{p} -\frac{n}{q}+ \delta\big)\big)-1}\tau^{\delta/2-1}\,ds \\
&= C(n, p,q, \delta,\delta',\delta^*)  B\bigg(\frac{\delta}{2}, 1-\frac{1}{2}\bigg(\frac{n}{p}  -\frac{n}{q} + \delta\bigg)\bigg)  t^{-\big(\frac{n}{2p}-\frac{n}{2q}\big)}e^{-t\tilde{\beta}},
\end{split}
\end{equation}
where we have used the substitution $\tau = s/t$ to compute the Beta integral. As well, by our assumption \eqref{deltassump},
\[
1-\frac{1}{2}\bigg(\frac{n}{p}  -\frac{n}{q}+ \delta\bigg) > 0,
\]
so both arguments appearing in the Beta function in \eqref{Gtd3} are greater than zero, as required for convergence. Thus $Gu$ has the desired decay rate and this fact combined with \eqref{u0td3} shows \eqref{td3}.

Next we prove \eqref{td4}, where in this case, we assume
\begin{equation} \label{td4asmp}
\frac{n}{2p}-\frac{n}{2q} \geq 1.
\end{equation}
Choose $0 <\varepsilon < \text{min}\big\{1, \frac{1}{2}\big(1+\frac{n}{q}\big)\big\}$ and define
\[
p' = \frac{qn}{2(1-\varepsilon)q+n}.
\]
Then
\[
\frac{n}{2p'} - \frac{n}{2q} = 1-\varepsilon< 1,
\]
and this quantity can be taken arbitrarily close to 1 by choosing $\varepsilon$ small enough. We next claim $p < p' < n$. For the first inequality, note that by \eqref{td4asmp},
\[
0< 1- \varepsilon < 1 \leq  \frac{n}{2p}-\frac{n}{2q}.
\]
Rearranging this, we get
\begin{equation}
\begin{split}
\frac{1}{p} &> \frac{2(1-\varepsilon)}{n}+\frac{1}{q}\\
&= \frac{2(1-\varepsilon)q+n}{qn} \\
&= \frac{1}{p'},
\end{split}
\end{equation}
which implies $p' >p$. To show the second inequality $p'< n$, we use that
\[
\varepsilon < \frac{1}{2}\bigg(1+\frac{n}{q}\bigg),
\]
which after rearrangement implies
\begin{equation}
\begin{split}
\frac{1}{n} &<\frac{2(1-\varepsilon)}{n}+\frac{1}{q}\\
&= \frac{2(1-\varepsilon)q+n}{qn} \\
&= \frac{1}{p'},
\end{split}
\end{equation}
so that $p' < n$ follows.

Having shown that $p < p' < n$, it follows by interpolation that, if the initial condition $a \in L^p (\mathbb{H}^n) \cap L^n(\mathbb{H}^n)$, then $a \in L^{p'} (\mathbb{H}^n)$. So as long as 
\begin{equation} \label{p'ltq}
p' \leq q,
\end{equation}
then the steps used to derive \eqref{td3} are valid for $p'$ in place of $p$, which verifies \eqref{td4}. Thus it remains to show \eqref{p'ltq}, but since $0 < \varepsilon < 1$ by definition, then $0 < 2(1-\varepsilon)/n$, giving
\begin{equation}
\begin{split}
\frac{1}{q} &< \frac{2(1-\varepsilon)}{n} + \frac{1}{q} \\
&=\frac{2(1-\varepsilon)q+n}{qn} \\
& = \frac{1}{p'},
\end{split}
\end{equation}
so that $p' < q$.

The decay estimates for $\nabla u$ are proven in an analogous way. 

 
 \section{Proof of Theorem \ref{Lptimedecay}}  \label{Lptimedecaypf} To prove Theorem \ref{Lptimedecay}, we must show \eqref{td2}, and for this, we use result \eqref{lpspace} from Theorem \ref{Lpthm} to get
\begin{equation}
\|u(t)\|_{L^p(\mathbb{H}^n)} \leq C\big(n, p, \delta, \|a\|_{L^n(\mathbb{H}^n)} \big) e^{-t \beta^*},
\end{equation}
where for $0 < \delta < 1$ is chosen so that $n/p+\delta <n$, $C\big(n, p, \delta, \|a\|_{L^n(\mathbb{H}^n)} \big)$ is the constant appearing in \eqref{nnn}, and $\beta^* > 0$ is defined by \eqref{betastar}. Therefore,
\begin{equation}
\begin{split}
\lim_{t \to \infty} \|u(t)\|_{L^p(\mathbb{H}^n)} &\leq \lim_{t \to \infty} C\big(n, p, \delta, \|a\|_{L^n(\mathbb{H}^n)} \big) e^{-t \beta^*} \\
& = 0.
\end{split}
\end{equation}

\appendix
\section{Strong continuity of the semigroup $e^{tL}$} \label{appendix} In this section, we prove $e^{tL}$ is a contractive and strongly continuous semigroup on $L^p(T^*\mathbb{H}^n)$, $1 \leq p < \infty$. To do so, we first state two definitions, a theorem, and a corollary from \cite{reedsimon2}.

\begin{definition}\cite{reedsimon2} 
Let $X$ be a Banach space, $\varphi \in X$. An element $\ell \in X^*$ that satisfies $\|\ell\|_{X^*} = \|\varphi\|_{X}$ and $\ell(\varphi) = \|\varphi\|_{X}^2$ is called a normalized tangent functional to $\varphi$. 
\end{definition}

\begin{remark}
As mentioned in \cite{reedsimon2}, every $\varphi \in X$ has at least one normalized tangent functional by the Hahn-Banach Theorem.
\end{remark}

\begin{remark}
For a $\sigma$-finite measure space $(X, \mathscr{B}, m)$, it is straightforward to check that for any $u \in L^p(X)$, $1 \leq p < \infty$, there exists a normalized tangent functional given by
\begin{equation} \label{lpnormtan}
\ell = c|u|^{p-2}u,
\end{equation}
where $c = \|u\|_{L^p(X)}^{1-p/p'}$ and $p'$ is the H{\"o}lder conjugate to $p$.
\end{remark}

\begin{definition} \cite{reedsimon2} 
A densely defined operator $A$ on a Banach space $X$ is called accretive if for each $\varphi \in D(A)$, 
\begin{equation}
\text{Re}(\ell(A\varphi)) \geq 0
\end{equation}
for some normalized tangent functional $\ell$ to $\varphi$.
\end{definition}

With these definitions in hand, we can now state a Theorem from \cite{reedsimon2}, as well as a useful corollary, which will be the main tools in showing contractivity and strong continuity of $e^{tL}$  on $L^p(\mathbb{H}^n)$, $1 \leq p < \infty$.

\begin{thm}  \cite{reedsimon2} \label{scchar}
A closed operater $A$ on a Banach space $X$ is the generator of a strongly continuous and contractive semigroup $e^{-tA}$ if and only if $A$ is accretive and
\begin{equation}
\text{Ran} (\lambda_0 +A) = X
\end{equation}
for some $\lambda_0 >0$.
\end{thm}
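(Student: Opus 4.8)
The plan is to recognize Theorem \ref{scchar} as the Banach-space Hille--Yosida / Lumer--Phillips characterization and to prove the two implications in turn. Throughout I will write $T(t) = e^{-tA}$ for the semigroup under discussion, and for $\varphi \in X$ I will invoke the normalized tangent functional $\ell \in X^*$ guaranteed by Hahn--Banach, so that $\|\ell\|_{X^*} = \|\varphi\|_X$ and $\ell(\varphi) = \|\varphi\|_X^2$.

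For the ``only if'' direction I would assume $A$ generates a strongly continuous contraction semigroup $T(t)$. To verify accretivity, fix $\varphi \in D(A)$ and a normalized tangent functional $\ell$ to $\varphi$; since $\|T(t)\| \le 1$,
\[
\text{Re}\,\ell(T(t)\varphi) \le |\ell(T(t)\varphi)| \le \|\ell\|_{X^*}\,\|T(t)\varphi\|_X \le \|\varphi\|_X^2 = \text{Re}\,\ell(\varphi),
\]
so $\text{Re}\,\ell\big(t^{-1}(\varphi - T(t)\varphi)\big) \ge 0$ for every $t > 0$; letting $t \downarrow 0$ and using strong differentiability of $t \mapsto T(t)\varphi$ at $0$ together with continuity of $\ell$ gives $\text{Re}\,\ell(A\varphi) \ge 0$. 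For the range condition I would note that, for any $\lambda_0 > 0$, the Laplace transform $\int_0^\infty e^{-\lambda_0 s} T(s)\,ds$ converges in operator norm (again by contractivity) and furnishes a bounded two-sided inverse of $\lambda_0 + A$, whence $\text{Ran}(\lambda_0 + A) = X$.

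The ``if'' direction is where I expect the real work to lie. Assuming $A$ closed, accretive, and $\text{Ran}(\lambda_0 + A) = X$ for some $\lambda_0 > 0$, the first step would be to promote accretivity to the quantitative bound $\|(\lambda + A)\varphi\|_X \ge \lambda\|\varphi\|_X$ for all $\lambda > 0$ and $\varphi \in D(A)$: pairing $(\lambda+A)\varphi$ with a normalized tangent functional $\ell$ to $\varphi$ gives $\|(\lambda+A)\varphi\|_X\,\|\varphi\|_X \ge \text{Re}\,\ell\big((\lambda+A)\varphi\big) = \lambda\|\varphi\|_X^2 + \text{Re}\,\ell(A\varphi) \ge \lambda\|\varphi\|_X^2$, which makes each $\lambda + A$ injective with closed range and $\|(\lambda+A)^{-1}\| \le 1/\lambda$ on that range. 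The second step would be a connectedness argument showing that $S := \{\lambda > 0 : \text{Ran}(\lambda+A) = X\}$ equals $(0,\infty)$: $S$ is nonempty since $\lambda_0 \in S$, and it is both open and closed in $(0,\infty)$ because a Neumann-series perturbation of $(\mu+A)^{-1}$ converges whenever $|\lambda - \mu| < \|(\mu+A)^{-1}\|^{-1}$, a radius bounded below by $\mu$ thanks to the resolvent bound. Hence $(\lambda+A)^{-1}$ is a globally defined bounded operator with $\|(\lambda+A)^{-1}\| \le 1/\lambda$ for every $\lambda > 0$, which is precisely the Hille--Yosida hypothesis for a contraction semigroup. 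The final step would be to build $T(t)$ by Yosida approximation: set $A_\lambda = \lambda A(\lambda+A)^{-1} = \lambda I - \lambda^2(\lambda+A)^{-1}$, observe that each $A_\lambda$ is bounded and that $e^{-tA_\lambda} = e^{-t\lambda}e^{t\lambda^2(\lambda+A)^{-1}}$ is a contraction since $\|e^{t\lambda^2(\lambda+A)^{-1}}\| \le e^{t\lambda^2/\lambda} = e^{t\lambda}$, show $A_\lambda\varphi \to A\varphi$ for $\varphi \in D(A)$, and pass to the limit $\lambda \to \infty$ to obtain a strongly continuous contraction semigroup whose generator is identified as $-A$ using the closedness of $A$ and the density of $D(A)$. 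The genuinely delicate points — as opposed to the routine estimates — are the open-and-closed argument upgrading the single $\lambda_0$ to all of $(0,\infty)$ and the convergence of the Yosida approximants, both of which hinge on the uniform resolvent bound extracted from accretivity.
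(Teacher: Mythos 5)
The paper does not prove Theorem \ref{scchar} at all --- it is quoted verbatim from Reed--Simon (it is their Theorem X.48, the Lumer--Phillips theorem) and used as a black box, so there is no in-paper argument to compare against. Your proposal is a correct reconstruction of the standard proof given in that reference: tangent functionals plus contractivity yield accretivity and the Laplace-transform resolvent in one direction, and in the other direction the dissipativity estimate $\|(\lambda+A)\varphi\|\ge\lambda\|\varphi\|$, the open-and-closed argument propagating the range condition from $\lambda_0$ to all of $(0,\infty)$, and the Yosida approximation scheme are exactly the classical route. One small imprecision: $\int_0^\infty e^{-\lambda_0 s}T(s)\,ds$ should be read as a strong (Bochner) integral applied to each vector rather than as an operator-norm--convergent integral, since $s\mapsto T(s)$ is only strongly continuous; the resulting operator is still bounded by $1/\lambda_0$ and inverts $\lambda_0+A$, so nothing downstream is affected.
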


\begin{cor} \cite{reedsimon2}   \label{scchar1}
Let $A$ be a closed operator on a Banach space $X$ such that both $A$ and its adjoint $A^*$ are accretive operators. Then $A$ generates a strongly continuous and contractive semigroup $e^{-tA}$.
\end{cor}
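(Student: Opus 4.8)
The plan is to derive the corollary from Theorem~\ref{scchar}, whose hypotheses are that $A$ be accretive and that $\text{Ran}(\lambda_0 + A) = X$ for some $\lambda_0 > 0$. The first is given, so the whole task reduces to the range condition. I would fix any $\lambda_0 > 0$ and show that $\text{Ran}(\lambda_0 + A)$ is simultaneously closed and dense in $X$. Closedness comes from the a priori estimate $\|(\lambda_0+A)\varphi\|_X \geq \lambda_0\|\varphi\|_X$ for $\varphi \in D(A)$: choosing a normalized tangent functional $\ell$ to $\varphi$ with $\text{Re}(\ell(A\varphi)) \geq 0$, one has
\[
\|\varphi\|_X\,\|(\lambda_0+A)\varphi\|_X \geq \text{Re}\big(\ell((\lambda_0+A)\varphi)\big) = \lambda_0\|\varphi\|_X^2 + \text{Re}(\ell(A\varphi)) \geq \lambda_0\|\varphi\|_X^2,
\]
and dividing by $\|\varphi\|_X$ (the case $\varphi = 0$ being trivial) gives the bound. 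Since $A$ is closed, this forces $\text{Ran}(\lambda_0 + A)$ to be closed: a Cauchy sequence in the range lifts to a Cauchy sequence in $D(A)$, whose limit lies in $D(A)$ by closedness.

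For density I would use the Hahn--Banach theorem together with the accretivity of $A^*$. Suppose $\ell \in X^*$ annihilates $\text{Ran}(\lambda_0 + A)$, i.e.\ $\ell((\lambda_0+A)\varphi) = 0$ for all $\varphi \in D(A)$; since $A$ (being accretive) is densely defined, this says exactly that $\ell \in D(A^*)$ with $(\lambda_0 + A^*)\ell = 0$. Accretivity of $A^*$ provides a normalized tangent functional $m$ to $\ell$ with $\text{Re}(m(A^*\ell)) \geq 0$, so
\[
0 = \text{Re}\big(m((\lambda_0+A^*)\ell)\big) = \lambda_0\|\ell\|_{X^*}^2 + \text{Re}(m(A^*\ell)) \geq \lambda_0\|\ell\|_{X^*}^2,
\]
which forces $\ell = 0$. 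Hence $\text{Ran}(\lambda_0 + A)$ is dense, and being closed as well, equals $X$; Theorem~\ref{scchar} then delivers that $A$ generates a strongly continuous contractive semigroup $e^{-tA}$.

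The step I expect to require the most care is the density argument, specifically the identification of ``$\ell$ annihilates $\text{Ran}(\lambda_0+A)$'' with ``$\ell \in D(A^*)$ and $(\lambda_0+A^*)\ell = 0$'', which relies on $A$ being densely defined so that $A^*$ is meaningful, and on keeping track that when $X$ is not reflexive the tangent functional $m$ to $\ell \in X^*$ lives in $X^{**}$ rather than $X$ --- this is harmless here because accretivity of $A^*$ is defined in precisely those terms. The remaining manipulations are routine, and in the paper this corollary may alternatively simply be cited from \cite{reedsimon2}.
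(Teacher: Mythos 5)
Your proof is correct and is essentially the standard argument for this result (Reed--Simon's corollary to Theorem X.48), which the paper does not prove but simply cites from \cite{reedsimon2}: the a priori bound $\|(\lambda_0+A)\varphi\|_X \geq \lambda_0\|\varphi\|_X$ from accretivity plus closedness of $A$ gives a closed range, and accretivity of $A^*$ kills any annihilating functional, giving density. The subtleties you flag --- that accretivity of $A$ (as defined in the paper) already includes dense definition so $A^*$ makes sense, and that the tangent functional to $\ell \in X^*$ lives in $X^{**}$, which is exactly where accretivity of $A^*$ places it --- are handled correctly, so nothing is missing.
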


We now state the main result to be proven.

\begin{thm} \label{etLsc}
For $\mathbb{H}^n$, $n\geq 2$, the unique self-adjoint extension of $L$, which will also be denoted by $L$, generates a contractive and strongly continuous semigroup $e^{tL}$ on $L^p(T^*\mathbb{H}^n)$ for all $ 1 \leq p < \infty$.
\end{thm}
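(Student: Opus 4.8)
The plan is to verify the hypotheses of Corollary \ref{scchar1}, applied to the operator $A = -L$ acting on the Banach space $X = L^p(T^*\mathbb{H}^n)$, $1 \leq p < \infty$. That is, I would show that $-L$ is a closed operator whose domain is dense, and that both $-L$ and its adjoint $(-L)^*$ are accretive in the sense of the definitions quoted from \cite{reedsimon2}; the conclusion that $e^{tL}$ is strongly continuous and contractive then follows immediately. For the density and closedness, I would take as the domain of $-L$ the closure (in the graph norm) of $C_c^\infty(T^*\mathbb{H}^n)$, which is dense in $L^p$, and recall that $L$ extends to a self-adjoint operator on $L^2$ by the result of Strichartz \cite{Strichartz_laplacian} cited earlier; the $L^p$ realization is then obtained as the appropriate closed extension, and the self-adjointness on $L^2$ also guarantees that $(-L)^* = -L$ there, which handles the $p=2$ case and gives a template for the general $p$.

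The crux of the argument is the accretivity estimate. Using the explicit normalized tangent functional $\ell = c\,|u|^{p-2}u$ with $c = \|u\|_{L^p}^{1-p/p'}$ recorded in the remark before Theorem \ref{scchar}, I need to show that for $u \in C_c^\infty(T^*\mathbb{H}^n)$,
\begin{equation}
\mathrm{Re}\,\ell(-Lu) = c\,\mathrm{Re}\int_{\mathbb{H}^n} g\big((-Lu),\, |u|^{p-2}u\big)\,dV \geq 0.
\end{equation}
Recalling that on $\mathbb{H}^n$ the operator $L$ acts on $1$-forms satisfying $d^*u=0$ as $Lu = \Delta_{B,1}u - (n-1)u$, and more generally using the full expression from \eqref{hodgeL'}, one writes $-L = -\Delta_{B,1} + (n-1)(\cdots)$; the Bochner Laplacian contributes $\nabla^*\nabla u$, and integrating by parts against $|u|^{p-2}u$ produces a term of the form $\int |u|^{p-2}|\nabla u|^2 + (\text{lower order from } \nabla|u|^{p-2})$, which is the standard Kato-type inequality computation showing $\int g(\nabla^*\nabla u, |u|^{p-2}u)\,dV \geq 0$. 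The zeroth-order term $(n-1)|u|^2$ integrates against $|u|^{p-2}u$ to give $(n-1)\int|u|^p \geq 0$, so it only helps. One must be careful that the $dd^*$ and $\mathrm{Ric}$ pieces in \eqref{hodgeL'} are handled: on $\mathbb{H}^n$, $\mathrm{Ric} = -(n-1)g$ is a negative multiple of the metric, which again produces a sign-favorable zeroth-order contribution, and the $dd^*$ term is nonnegative after integration by parts ($\int g(dd^*u, \cdot)$ reduces to an $L^2$-type pairing in the scalar $d^*u$). For $(-L)^*$, since $L$ is formally self-adjoint, the same pointwise computation applies verbatim with the roles symmetric, so accretivity of the adjoint follows identically.

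The main obstacle I anticipate is the technical justification of the integration by parts and the Kato inequality when $p \neq 2$: the function $|u|^{p-2}u$ is not smooth where $u$ vanishes, so one must either approximate $|u|$ by $(\varepsilon^2 + |u|^2)^{1/2}$, carry out the estimate, and pass to the limit $\varepsilon \to 0$ via dominated convergence, or invoke a known form of Kato's inequality on Riemannian manifolds for the Bochner Laplacian. Since $\mathbb{H}^n$ is complete with bounded geometry and we are working with compactly supported smooth test forms, there are no boundary terms and no completeness pathologies, so this is a routine but careful regularization argument. A secondary point is confirming that the range condition in Theorem \ref{scchar} (equivalently, the hypotheses of Corollary \ref{scchar1}) genuinely holds for the closed $L^p$ extension — here I would lean on the fact that $L$ is already known to generate the analytic semigroup underlying Pierfelice's dispersive estimates in Theorem \ref{dsmest1}, so the resolvent $(\lambda_0 - L)^{-1}$ exists and is bounded on $L^p$ for $\lambda_0$ large, giving the required surjectivity.
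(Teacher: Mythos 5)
Your overall frame coincides with the paper's: both arguments run through Corollary \ref{scchar1}, use the self-adjointness of $L$ from Strichartz to handle the adjoint, and reduce accretivity of $-L$ to accretivity of the Bochner Laplacian via $\langle |u|^{p-2}u, Lu\rangle = \langle |u|^{p-2}u, \Delta_{B,1}u\rangle - (n-1)\langle|u|^{p-2}u,u\rangle$, where the zeroth-order term has the favourable sign. Where you genuinely diverge is in how accretivity of $\nabla^*\nabla = -\Delta_{B,1}$ on $L^p(T^*\mathbb{H}^n)$ is established. You propose the hands-on route: pair against the normalized tangent functional $|u|^{p-2}u$, integrate by parts, and run the Kato-inequality computation (using $|\nabla|u||\le|\nabla u|$ to absorb the $(p-2)$ term, with the $(\varepsilon^2+|u|^2)^{1/2}$ regularization at zeros of $u$). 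The paper instead argues softly: it applies Theorem \ref{scchar} in reverse, showing that $e^{t\Delta_{B,1}}$ is itself an $L^p$-contraction semigroup for all $1\le p<\infty$ by combining the pointwise domination $|e^{t\Delta_{B,1}}\omega|(x)\le e^{t\Delta_g}|\omega|(x)$ (Lemma \ref{katosimon}, Hess--Schrader--Uhlenbrock/G\"uneysu) with Shigekawa's extension theorem (Theorem \ref{shige}) and Strichartz's facts that the scalar heat semigroup on $\mathbb{H}^n$ is contractive, strongly continuous, and Markovian on every $L^p$. Your route is more elementary and self-contained but must carry out the regularization you describe; the paper's route avoids all pointwise manipulation of $|u|^{p-2}u$ at the price of citing two nontrivial external theorems. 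Separately, your appeal to Pierfelice's estimates for the range condition is both unnecessary (Corollary \ref{scchar1} does not require it to be checked) and circular, since those estimates presuppose the semigroup being constructed.

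One step in your computation does not hold as written: the $dd^*$ contribution. For $p\neq 2$,
\[
\int_{\mathbb{H}^n} g\big(dd^*u,\,|u|^{p-2}u\big)\,dV=\int_{\mathbb{H}^n}(d^*u)\,d^*\big(|u|^{p-2}u\big)\,dV,
\]
and $d^*(|u|^{p-2}u)$ equals $|u|^{p-2}d^*u$ plus a term involving $\nabla(|u|^{p-2})$ contracted with $u$; the resulting cross term against $d^*u$ has no definite sign, so this is not an ``$L^2$-type pairing in the scalar $d^*u$'' and its nonnegativity is not automatic. The paper sidesteps the issue entirely: its identity \eqref{Laccr} uses the reduced form $Lu=\Delta_{B,1}u-(n-1)u$ and never confronts the $dd^*$ term, and the semigroup-domination argument for $\Delta_{B,1}$ does not need it. If you insist on the full expression \eqref{hodgeL'} on all of $L^p(T^*\mathbb{H}^n)$, you need either a separate argument for this term or a restriction to the divergence-free subspace where it vanishes.
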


\begin{proof}
Since the extension $L$ is self-adjoint by Strichartz' work in \cite{Strichartz_laplacian}, it suffices by Corollary \ref{scchar1} to show $-L$ is accretive. To that end, let $u \in L^p(T^*M)$, $1 \leq p < \infty$. Then 
\begin{equation} \label{Laccr}
\begin{split}
\langle |u|^{p-2}u, Lu \rangle &= \langle |u|^{p-2}u,\Delta_{B,1}u \rangle - (n-1) \langle |u|^{p-2}u, u \rangle,
\end{split}
\end{equation}
where $\langle \cdot, \cdot \rangle$ represents the duality pairing. The term $(n-1) \langle |u|^{p-2}u, u \rangle \geq 0$, so if $-\Delta_{B,1}u$ is accretive for $1 \leq p < \infty$, then $\langle |u|^{p-2}u,\Delta_{B,1}u \rangle \leq 0$ and it follows that 
\begin{equation} 
\langle |u|^{p-2}u, Lu \rangle \leq 0.
\end{equation}
Therefore, if $-\Delta_{B,1}$ is accretive on $L^p(T^*M)$ for $1 \leq p < \infty$, then $-L$ is also accretive on $L^p(T^*M)$ for all $ 1 \leq p < \infty$ and by Corollary \ref{scchar1}, $e^{tL}$ extends to a strongly continuous semigroup on $L^p(T^*M)$ for all $1 \leq p < \infty$.

So to conclude the proof, we must show that $-\Delta_{B,1}u$ is accretive for $1 \leq p < \infty$. To do this, we will use Theorem \ref{scchar} in reverse and show that $-\Delta_{B,1}u$ generates a contractive and strongly continuous semigroup.  

First we state as a lemma a pointwise domination result relating the semigroup $e^{t\Delta_{B,k}}$ on $(k,0)$-tensors to the semigroup $e^{t\Delta_{g}}$ for functions. 

\begin{lem} \cite{guneysu} \cite{hess1980} \cite{rosethesis} \label{katosimon}
Let $(M,g)$ be a complete Riemannian manifold. For all $k = 1, 2, 3, \ldots, n$ and all $\omega \in L^2 (\Omega^k(M))$,
\begin{equation}
|e^{t\Delta_{B,k}}\omega|(x) \leq e^{t\Delta_g}|\omega|(x), \quad \text{for every $x \in M$}.
\end{equation}
\end{lem}

\begin{remark}This lemma was shown for compact manifolds by Hess, Schrader, and Uhlenbrock in \cite{hess1980} , where the authors also remark on extensions to the noncompact case under certain assumptions. G{\"u}neysu then proved the general noncompact case in \cite{guneysu}. For more information on this inequality, see \cite{rosethesis}.
\end{remark}

Next we state a definition from Shigekawa in \cite{shigekawa}.

\begin{definition}
Let $(X, \mathscr{B},m)$ by a $\sigma$-finite measure space and suppose $ T_t$ is a strongly continuous and contractive semigroup on $L^p(X, \R)$, $1 \leq p < \infty$. Then we say $ T_t$ is a Markovian semigroup if, for every $f \in L^p(X, \R)$ with $0 \leq f \leq 1$, 
\begin{equation}
0 \leq T_t f \leq 1.
\end{equation}
\end{definition}

As well, we will need the following theorem that is related to the pointwise domination estimate in Lemma \ref{katosimon}. This result also comes from Shigekawa in \cite{shigekawa} and discussed in further detail by Ouhabaz in \cite{ouhabaz}.

\begin{thm} \cite{ouhabaz}\cite{shigekawa}  \label{shige}
Suppose that $\vec{T}_t$ is a vector valued semigroup which is strongly continuous and contractive on $L^2(X, K)$, where $K$ is a real or complex separable Hilbert space. If for every $u \in L^2(X,K)$,
\begin{equation}
|\vec{T}_t u|(x) \leq T_t|u|(x) \quad x \in X,
\end{equation}
where $T_t$ is a strongly continuous, contractive, and Markovian semigroup on $L^p(X, \R)$ for all $1 \leq p < \infty$, then $\vec{T}_t$ extends to a strongly continuous and contractive semigroup on $L^p(X, K)$ for all $1 \leq p < \infty$.
\end{thm}

In light of this theorem and Lemma \ref{katosimon}, it remains to show two things. The first is that $e^{t\Delta_{B,1}}$ is strongly continuous and contractive on $L^2(T^*\mathbb{H}^n)$. The second is that $e^{t\Delta_g}$ is a strongly continuous, contractive, and Markovian semigroup on $L^p(\mathbb{H}^n)$ for all $1 \leq p < \infty$.

For the first statement, this follows by Strichartz' Theorem 3.7 in \cite{Strichartz_laplacian}, which gives that $e^{t\Delta_{B,1}}$  extends to a strongly continuous and contractive semigroup on $L^p(T^*\mathbb{H}^n)$ for $\dfrac{3}{2} \leq p \leq 3$ and, in particular, on $L^2(T^*\mathbb{H}^n)$.

For the second statement, this also follows by the work of Strichartz in  \cite{Strichartz_laplacian}, specifically Theorems 3.5 and 3.6. These theorems asserts that $e^{t\Delta_g}$ extends to a strongly continuous and contractive semigroup on $L^p(\mathbb{H}^n)$ for all $1 \leq p < \infty$ and that for all $f \in L^2(\mathbb{H}^n)$,
\begin{equation}
e^{t\Delta_g}f(x) = \int_{\mathbb{H}^n} H_t(x,y)f(y)\,dV(y),
\end{equation}
where $H_t(x,y)$ is a $C^{\infty}$ real-valued function on $[0, \infty) \times \mathbb{H}^n \times \mathbb{H}^n$ such that $H_t(x,y) = H_t(y,x)$, $H_t(x,y) >0$ for all $x, y \in \mathbb{H}^n$ and $t>0$, and that
\begin{equation}
\int_{\mathbb{H}^n} H_t(x,y)\,dV(y) \leq 1
\end{equation} 
for all $x \in \mathbb{H}^n$ and $t >0$.

With these facts stated, it follows that if $f \in L^p(\mathbb{H}^n)$ such that $0 \leq f \leq 1$, where $1 \leq p < \infty$, then
\begin{equation}
e^{t\Delta_g}f(x) = \int_{\mathbb{H}^n} H_t(x,y)f(y)\,dV(y) \geq 0
\end{equation}
and
\begin{equation}
e^{t\Delta_g}f(x) = \int_{\mathbb{H}^n} H_t(x,y)f(y)\,dV(y) \leq \int_{\mathbb{H}^n} H_t(x,y)\,dV(y) \leq 1,
\end{equation}
so that $\{e^{t\Delta_g}\}_{t\geq0}$ is also Markovian.

Thus by Lemma \ref{katosimon} and Theorem \ref{shige}, we conclude $e^{t\Delta_{B,1}}$ extends to a strongly continuous and contractive semigroup on $L^p(T^*\mathbb{H}^n)$ for $1 \leq p < \infty$, 
\end{proof}

\bibliography{ref}
\bibliographystyle{plain}
\vspace{.125in}

\end{document}